\documentclass[11pt]{amsart}
\usepackage{graphics}             
\usepackage{graphicx}
\usepackage{subcaption}
\usepackage{amsmath, amsfonts, amssymb, amsxtra, mathtools,amsthm} 
\usepackage{multicol} 
\usepackage{enumerate} 
\usepackage{float}
\usepackage{mathrsfs}
\usepackage{xcolor}
\usepackage{ esint }
\usepackage{comment}
\usepackage{booktabs}
\usepackage{tikz, pgfplots} 
\pgfplotsset{compat = 1.15} 
\pgfplotsset{soldot/.style={color=black,only marks,mark=*}} 
\pgfplotsset{holdot/.style={color=black,fill=white,only marks,mark=*}}
\pgfplotsset{mystyle/.append style={axis lines=middle,
							xlabel={$x$},
							ylabel={$y$},
							xticklabel style={xshift=-0.5ex},
							yticklabel style={yshift=-0.5ex},
							xlabel style={right},
							ylabel style={above}, }}
\usepackage{polynom} 
\usepackage{color} 
\usepackage{arydshln} 
\usepackage{hyperref} 
\allowdisplaybreaks
\usetikzlibrary{decorations.pathmorphing,patterns}
\usepackage{pgfplots}
\usepackage{siunitx}
\usepackage[g]{esvect}
\usepackage{caption}
\usepackage{subcaption}
\usepackage{fancyhdr}
 \usepackage{setspace}
   
\usepackage{graphics,graphicx, multicol,hyperref,enumitem,amsmath}
\usepackage[margin=1in]{geometry}
\pgfplotsset{compat=1.18}
\usetikzlibrary{arrows.meta, decorations.markings, calc}

\usetikzlibrary{arrows.meta, decorations.markings, calc}

\newtheorem{proposition}{Proposition}

\newtheorem{theorem}{Theorem}
\newtheorem{corollary}{Corollary}

\newtheorem{remark}{Remark}

\begin{document}
\title{Exit Times for Brownian Motion and Location Detection}
\author{Cole A. Kratz}
\author{Jeffrey J. Langford}
\address{Department of Mathematics, Bucknell University, Lewisburg, Pennsylvania 17837}

\email{jeffrey.langford@bucknell.edu}
\email{cak039@bucknell.edu}

\begin{abstract}
In 2023, Wyman and Xi \cite{WX} asked ``Can you hear your location on a manifold?'' We pose a related question, asking if probabilistic data can be used to recover location within a manifold. More precisely, we ask if location can be recovered from the distribution of exit times of Brownian sample paths. We show that exit time moments determine location up to symmetry for a variety of two-dimensional domains. Beginning with (convex) domains whose boundaries are conic curves, we show that in elliptic, parabolic, and hyperbolic domains, the first two exit time moments are sufficient to detect location up to symmetry. We establish similar results in unbounded wedge domains and equilateral triangular domains. We show that the full sequence of exit time moments determines location in rectangular domains. We end with two location detection results on general planar domains. The key tool driving our work is a ``Poisson hierarchy,'' which establishes a connection between exit time moments of Brownian motion and solutions to a family of PDE problems.
\end{abstract}

\keywords{exit times, location detection, conformal mapping}

\subjclass[2000]{35J05, 60J65, 35R30}

\maketitle

\section{Introduction and Background}

\subsection*{Motivation} Our paper is motivated by the famous mathematical question of Kac \cite{K} who in 1966 asked ``Can you hear the shape of a drum?" A drumhead produces its own sounds or vibration frequencies. As the shape of the drumhead changes, the sounds the drumhead makes, or more precisely its vibration frequencies, change. It is reasonable then to wonder whether two drums that sound alike must look alike, and this is indeed the question posed by Kac. In 1992, Gordon, Webb, and Wolpert \cite{GWW} resolved Kac's question by constructing two different drums that produce the same sounds. See Figure \ref{fig:difdrums}  below. In other words, one cannot necessarily hear the shape of a drum.

\begin{figure}[h!]
    \centering
\begin{tikzpicture}[scale=1, thick]
  \draw[fill=blue]
    (0,2) -- (1,3) -- (1,2) -- (3,2) -- (3,1) -- (2,0) -- (2,1) -- (1,1) -- cycle;
  \node at (1.2,-0.5) {Drum $A$};

  \begin{scope}[xshift=5cm]
    \draw[fill=orange]
      (0,3) -- (1,3) -- (1,2) -- (2,2) -- (3,1) -- (2,1) -- (2,0) -- (0,2) -- cycle;
    \node at (1.2,-0.5) {Drum $B$};
  \end{scope}
\end{tikzpicture}
    \caption{Distinct drums $A$ and $B$ that produce the same sounds.}
    \label{fig:difdrums}
\end{figure} 

Although the answer to Kac's question is ultimately `no,' other questions follow naturally. Indeed, Kac's question continues to be a driving influence in the field of spectral geometry and papers on related questions occupy many journal pages. For this paper, the most relevant question comes from Wyman and Xi \cite{WX} who in 2023 asked, ``Can you hear your location on a manifold?'' 
Physically, if you enter a room of known shape while blindfolded, can you determine your location by clapping and listening to the reverberations against the walls? See Figure \ref{fig:wymanxiroom} below.

\begin{figure}[H]
    \centering
\begin{tikzpicture}[scale=1.5]
    \draw[thick] (0,0) rectangle (5,2);
    
    \begin{scope}
        \clip (0,0) rectangle (5,2);
        
        \coordinate (Source) at (4, 1.2);
        
        \fill (Source) circle (2pt) node[below left, scale=0.6]{};
        
        \foreach \r in {0.3, 0.6, 0.9, 1.2, 1.5, 1.8, 2.1} {
            \draw[blue, thick, opacity={1.2}] (Source) circle (\r);
        }
        
        
        \foreach \r in {1.2, 1.5} {
            \draw[orange, opacity=0.8] (6, 1.2) circle (\r);
        }
        
        \foreach \r in {1.0, 1.3} {
            \draw[orange, opacity=0.8] (4, 2.8) circle (\r);
        }
        
        \foreach \r in {1.4, 1.7} {
            \draw[orange,  opacity=0.8] (4, -1.2) circle (\r);
        }
    \end{scope}
\end{tikzpicture}
    \caption{Sound reverberations in a rectangular room.}
    \label{fig:wymanxiroom}
\end{figure}

Wyman and Xi answered their question in the affirmative for ellipsoid rooms as well as one- and two-dimensional rectangular rooms with certain boundary conditions. In our paper, we view Wyman and Xi's question through a probabilistic lens. Instead of using sound information, we use Brownian information. The motivating setup and driving question are as follows.

\begin{quote}
 \textbf{\emph{Motivating question:}} Suppose you enter a room of known shape while blindfolded. You have at your disposal a large number of wind-up toys that walk randomly around the room in a Brownian way until they hit a wall. If you track how long it takes for these toys to hit the walls,
can you determine your exact location within the room?
\end{quote}

We use Brownian sample paths to model the behavior of our wind-up toys. Figure \ref{fig:circroompaths} below shows two examples of Brownian paths starting in the middle of a circular room. Notice that the path on the left hits a wall more quickly than the path on the right. The amount of time it takes for a path to first hit a wall is called its exit time. For example, the sample path on the right has a larger exit time than the sample path on the left.
\vspace{1cm}

\begin{figure}[h]
\begin{center}
  \begin{tikzpicture}[scale=1.2, font=\small]

\pgfmathsetseed{2}

\begin{scope}[shift={(6.5,0)}, local bounding box=leftcircle]
  \def\cx{0} \def\cy{0} \def\r{2.5}
  
  \draw[thick, blue] (\cx,\cy) circle (\r);
  
  \xdef\mypathL{(\cx,\cy)}
  \foreach \i in {1,...,2000} {
    \pgfmathsetmacro{\dx}{0.13*rand}
    \pgfmathsetmacro{\dy}{0.13*rand}
    \pgfmathsetmacro{\nx}{\cx + \dx}
    \pgfmathsetmacro{\ny}{\cy + \dy}
    
    \pgfmathsetmacro{\dist}{sqrt((\nx)^2 + (\ny)^2)}
    
    \ifdim \dist pt > \r pt
      \pgfmathsetmacro{\finalx}{\nx * (\r / \dist)}
      \pgfmathsetmacro{\finaly}{\ny * (\r / \dist)}
      \xdef\mypathL{\mypathL -- (\finalx,\finaly)}
      \xdef\cx{\finalx}
      \xdef\cy{\finaly}
      \breakforeach 
    \else
      \xdef\mypathL{\mypathL -- (\nx,\ny)}
      \xdef\cx{\nx}
      \xdef\cy{\ny}
    \fi
  }

  \draw[black, thin, line join=round] \mypathL;
  \filldraw[black] (0,0) circle (1.5pt) ;
  \filldraw[black] (\cx,\cy) circle (1.5pt) ;


\end{scope}

\begin{scope}[shift={(0,0)}, local bounding box=rightcircle]
  \def\cx{0} \def\cy{0} \def\r{2.5}
  
  \draw[thick, blue] (\cx,\cy) circle (\r);

  \xdef\mypathR{(\cx,\cy)}
  \foreach \i in {1,...,2000} {
    \pgfmathsetmacro{\dx}{0.13*rand}
    \pgfmathsetmacro{\dy}{0.13*rand}
    \pgfmathsetmacro{\nx}{\cx + \dx}
    \pgfmathsetmacro{\ny}{\cy + \dy}
    
    \pgfmathsetmacro{\dist}{sqrt((\nx)^2 + (\ny)^2)}
    
    \ifdim \dist pt > \r pt
      \pgfmathsetmacro{\finalx}{\nx * (\r / \dist)}
      \pgfmathsetmacro{\finaly}{\ny * (\r / \dist)}
      \xdef\mypathR{\mypathR -- (\finalx,\finaly)}
      \xdef\cx{\finalx}
      \xdef\cy{\finaly}
      \breakforeach
    \else
      \xdef\mypathR{\mypathR -- (\nx,\ny)}
      \xdef\cx{\nx}
      \xdef\cy{\ny}
    \fi
  }

  \draw[black, thin, line join=round] \mypathR;
  \filldraw[black] (0,0) circle (1.5pt);
  \filldraw[black] (\cx,\cy) circle (1.5pt) ;

\end{scope}

\end{tikzpicture}
\caption{Two Brownian sample paths starting in the middle of a circular room.}
\label{fig:circroompaths}
\end{center}
\end{figure}

We show that for many room geometries, exit time information is sufficient to determine location up to symmetry. Our results hold for rooms enclosed by an ellipse, parabola, hyperbola, wedge, equilateral triangle, or rectangle.

\subsection*{Main results} To precisely state our results, let $D \subseteq \mathbb{R}^2$ be a planar Lipschitz domain (not necessarily bounded). Let $X_t$ denote Brownian motion on $\mathbb{R}^2$ starting at $(x,y)$. We let $\tau$ denote the first exit time from $D$:
\[
\tau=\tau_D=\inf \{t\geq 0:X_t \notin D\}.
\]
Then $\tau$ is a random variable defined on the space $\Omega$ of Brownian sample paths starting at $(x,y)$. Let $\mathbb{P}^{(x,y)}$ denote the probability measure that charges Brownian paths starting at $(x,y)$. Then for $k\geq 1$, the exit time moments are defined by
\[
\mathbb{E}[\tau^k]=\mathbb{E}^{(x,y)}[\tau_D^k]=\int_{\Omega}\tau_D^k\,d\mathbb{P}^{(x,y)}.
\]
When $k=1$ for instance, $\mathbb{E}^{(x,y)}[\tau_D]$ is the mean exit time and tells us how long it takes on average for a Brownian path starting at $(x,y)$ to leave the domain $D$.

For the majority of the present paper, we focus our attention on various concrete domains $D$. Those domains are listed in the table below.

\begin{center}
\begin{table}[htbp]
\centering
\renewcommand{\arraystretch}{1.6} 
\begin{tabular}{ll}
\toprule
\textbf{Geometry of Domain} & \textbf{Mathematical Definition} \\
\midrule
Elliptic & $\mathcal{E}=\left\{(x,y)\in \mathbb{R}^2 : \left(\frac{x}{a}\right)^2+\left(\frac{y}{b}\right)^2<1\right\} \textup{ for } a,b>0, a\neq b$ \\
Parabolic & $\mathcal{P}=\left\{(x,y)\in \mathbb{R}^2 : y>cx^2\right\} \textup{ for } c>0$ \\
Hyperbolic & $\mathcal{H}=\left\{(x,y)\in \mathbb{R}^2 : \frac{x^2}{a^2}-\frac{y^2}{b^2}>1, x>a\right\} \textup{ for } a,b>0,\ a>(1+\sqrt{2})b$ \\
Wedge & $\mathcal{W}=\left\{z=re^{i\theta}\in \mathbb{R}^2 : 0<r<\infty,\ -\frac{\alpha}{2}<\theta<\frac{\alpha}{2}\right\} \textup{ for } 0<\alpha<\frac{\pi}{4}$ \\
Equilateral Triangle & $\mathcal{T} = \left\{ (x,y) \in \mathbb{R}^2 : -\frac{b}{\sqrt{3}} < x < \frac{b}{\sqrt{3}}, \; 0 < y < b - \sqrt{3}|x| \right\} \textup{ for } b>0$ \\
Rectangular & $\mathcal{R}= \left\{(x,y)\in \mathbb{R}^2 : 0<x<a,\ 0<y<b\right\} \textup{ for } a,b>0, a\neq b$ \\
\bottomrule
\end{tabular}
\caption{Concrete domains considered in our paper.}
\label{tab:domains}
\end{table}
\end{center}
For the non-rectangular domains listed above, we show that the level curves of the first two exit time moments intersect at either a single point, a number of points equal to the number of axes of symmetry, or double that number. Surprisingly, our numerical work suggests that for general rectangular domains, the level curves of the first two exit time moments can intersect at more than four points (see Remark \ref{rmk:numevid}). So in the rectangular setting, we establish a similar result by intersecting the level curves of all of the exit time moments. More precisely we have the following.

\begin{theorem}[Intersection of Moment Level Curves]\label{thm:main}
Let $D$ denote any of the non-rectangular domains from Table \ref{tab:domains} and suppose the level curves for the first two exit time moments $\mathbb{E}^{(x,y)}[\tau_D]=c_1$ and $\mathbb{E}^{(x,y)}[\tau_D^2]=c_2$ intersect for some positive constants $c_1$ and $c_2$.
\begin{itemize}
\item[1.] If $D=\mathcal E$, then the level curves intersect at either four points, two points, or one point determined by $c_1$ and $c_2$.
\item[2.] If $D=\mathcal P, \mathcal H, \mathcal W$, then the level curves intersect at either two points or a single point determined by $c_1$ and $c_2$.
\item[3.] If $D=\mathcal T$, then the level curves intersect at either six points, three points, or a single point determined by $c_1$ and $c_2$.
\end{itemize}

If $D=\mathcal R$ is a rectangular domain as in Table \ref{tab:domains} and $c_k$ for $k\geq 1$ are positive numbers such that the level curves $\mathbb{E}^{(x,y)}[\tau_{\mathcal R}^k]=c_k$ intersect, then their intersection is either one, two, or four points determined by the values $c_k$.
\end{theorem}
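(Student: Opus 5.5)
The plan is to make the Poisson hierarchy explicit: $u_1=\mathbb{E}^{(x,y)}[\tau_D]$ solves $\tfrac12\Delta u_1=-1$ in $D$ with $u_1=0$ on $\partial D$, and for $k\ge2$, $u_k=\mathbb{E}^{(x,y)}[\tau_D^k]$ solves $\tfrac12\Delta u_k=-k\,u_{k-1}$ with zero boundary data. For the unbounded domains we take the minimal, polynomially growing solutions. For each non-rectangular domain we compute $u_1$ and $u_2$ in closed form.

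For $\mathcal E$ we have $u_1=\frac{a^2b^2}{a^2+b^2}\bigl(1-\frac{x^2}{a^2}-\frac{y^2}{b^2}\bigr)$. Then $u_2$ is a quartic polynomial with factor $(1-x^2/a^2-y^2/b^2)$, whose coefficients we find by matching. For $\mathcal P$, $u_1$ is proportional to $y-cx^2$. For $\mathcal H$, $u_1$ is proportional to $x^2/a^2-y^2/b^2-1$; the condition $a>(1+\sqrt2)b$ is presumably what makes the normalizing constant positive and the higher moments finite. For $\mathcal W$, $u_1=\frac{r^2}{2}\bigl(\frac{\cos 2\theta}{\cos\alpha}-1\bigr)$. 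For $\mathcal T$, $u_1$ is the product of the three linear defining forms times a constant. In each case we look for $u_2$ as a polynomial, or in $\mathcal W$ as $r^4$ times a trigonometric polynomial in $\theta$, that is divisible by the defining function $g$ with $u_1=\kappa g$.

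The key reduction is to use $u_1$ to eliminate a variable. On the level set $u_1=c_1$ we parametrize and substitute into $u_2$. Generically $u_2$ then becomes a function of a single symmetric coordinate: $s=x^2$ for $\mathcal E$, $x^2$ for $\mathcal P$, $y^2$ for $\mathcal H$, and $\cos 2\theta$ (equivalently $\theta^2$) for $\mathcal W$. The goal is to show this function is strictly monotone in that coordinate along the level curve. For $\mathcal E$, on $g=c_1/\kappa$ one writes $y^2/b^2=1-c_1/\kappa-x^2/a^2$, so $u_2$ restricted to the curve is a polynomial in $x^2$. Using $a\neq b$ we check that it is affine or has a nonzero derivative on the admissible interval. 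Then $u_2=c_2$ determines $x^2$ uniquely, hence $y^2$. This gives four points $(\pm x,\pm y)$ when both are nonzero, two when exactly one vanishes, and one at the center. For $\mathcal P,\mathcal H,\mathcal W$ the same argument determines the symmetric coordinate, leaving the reflection $x\mapsto -x$ (or $y\mapsto -y$, or $\theta\mapsto-\theta$). That gives two points, or one point on the axis. For $\mathcal W$, homogeneity helps: $u_1=r^2f(\theta)$ and $u_2=r^4h(\theta)$, so the ratio $u_2/u_1^2=h/f^2$ depends only on $\theta$. We therefore need $h/f^2$ to be strictly monotone in $|\theta|$, and this is where the restriction $\alpha<\pi/4$ should enter.

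The triangle is the main obstacle. Its symmetry group is dihedral of order $6$, so the natural invariants are the polynomials $p_2=x^2+(y-y_0)^2$ and $p_3$, the cubic $\mathrm{Re}(z-z_0)^3$ suitably rotated, taken about the centroid $z_0$. We expect $u_1=\kappa(\text{const}-3 y_0 p_2\cdot{}+p_3)$ in suitable normalization, and $u_2$ a degree-$6$ polynomial in $p_2$ and $p_3$ (with $p_3^2$ allowed). On $u_1=c_1$, $p_3$ is an affine function of $p_2$, so $u_2$ reduces to a one-variable polynomial in $p_2$. We must show it is monotone on the admissible range, which we would check by an explicit derivative computation. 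Once $(p_2,p_3)$ is fixed, the points are determined up to the $D_3$ action. That yields six points generically, three on the symmetry axes, and one at the centroid.

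For $\mathcal R$, separation of variables gives Fourier series for each $u_k$. The rectangle is symmetric under $x\mapsto a-x$ and $y\mapsto b-y$. We use the large-$k$ asymptotics $u_k\sim k!\,\lambda_1^{-k}C\phi_1(x,y)$, where $\phi_1=\sin(\pi x/a)\sin(\pi y/b)$ and $\lambda_1$ is the principal eigenvalue of $-\tfrac12\Delta$. Subtracting off this leading term, the next mode, $\sin(2\pi x/a)\sin(\pi y/b)$ or its $y$-analogue depending on whether $a>b$ or $a<b$, dominates. However, this mode is antisymmetric and its coefficient is squared in the moments, so we should instead use full eigenfunction data. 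The sequence $(u_k)_k$ determines every $c_{mn}\phi_{mn}(x,y)$ with $m,n$ odd, through the distinct (or grouped) eigenvalues via moment/Laplace-transform uniqueness; this is essentially the Laplace transform of $\tau$. In particular we recover $\sin(\pi x/a)\sin(\pi y/b)$ and $\sin(3\pi x/a)\sin(\pi y/b)$, whose ratio is a polynomial in $\sin^2(\pi x/a)$. That gives $\sin^2(\pi x/a)$, hence $x$ up to $x\mapsto a-x$, and similarly for $y$. This yields one, two, or four points.
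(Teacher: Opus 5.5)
The genuine gap is identifying the moments on the unbounded domains $\mathcal P$, $\mathcal H$, $\mathcal W$. Saying you ``take the minimal, polynomially growing solutions'' assumes what has to be proved. On these domains the hierarchy has many solutions: in $\mathcal W$ you can add any multiple of $r^{\pi/\alpha}\cos(\pi\theta/\alpha)$. Moreover, the closed-form candidates keep solving it outside the stated parameter ranges. For example, the wedge formula for $u_2$ still satisfies $-\Delta v_2=2v_1$ and $v_2|_{\partial\mathcal W}=0$ for $\pi/4<\alpha<\pi/2$, where $\mathbb{E}[\tau_{\mathcal W}^2]=\infty$. This identification is exactly where $a>(1+\sqrt2)b$ and $\alpha<\pi/4$ enter, and your plan never addresses it.

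The paper does it in two steps.
First, it uses $v_k\ge 0$, which needs the hypotheses. The minimum principle for harmonic and then superharmonic functions on the truncations $\mathcal H\cap\{x<n\}$ and $\mathcal W\cap\{r<n\}$, followed by monotone convergence, gives $0\le u_k\le v_k<\infty$, so $u_k$ solves the hierarchy.
Second, $h=v_k-u_k\ge0$ is harmonic and vanishes on $\partial D$. After a conformal map from the right half-plane and odd reflection, it is an entire harmonic function of growth $O(|z|^{\gamma})$, with $\gamma=4\alpha/\pi$, resp.\ $\gamma=\frac{8}{\pi}\arctan(b/a)$, for $k=2$. The condition $\gamma<1$ is precisely the hypothesis, and it forces $h\equiv0$.
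For $\mathcal P$ the paper instead exhausts by ellipses and passes to the limit in the explicit elliptic formulas.

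You spend $\alpha<\pi/4$ on the monotonicity step, where it is not needed. The ratio $u_2/u_1^2$ is a M\"obius function of $t=\cos2\theta$ with derivative $-\frac{2\cos\alpha\sin^2\alpha}{3\cos2\alpha}(t-\cos\alpha)^{-2}\neq0$. Your scale-invariant ratio is thus a valid alternative to the paper's explicit solve for $r^2$, but it leaves the real difficulty untouched.

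Some smaller points. In the rectangle you cannot in general isolate $\sin(3\pi x/a)\sin(\pi y/b)$. The moments determine only the projection of $1$ onto each eigenspace. For $b=\sqrt3\,a$ one has $\lambda_{3,1}=\lambda_{1,5}$, so you recover $\frac{16}{\pi^2}\bigl(\frac13\phi_{3,1}+\frac15\phi_{1,5}\bigr)$, and your ratio argument for $x$ fails. The repair is already in your ``similarly for $y$''. Since $b>a$, no other odd--odd pair shares $\lambda_{1,3}$. The ratio $\phi_{1,3}/\phi_{1,1}=3-4\sin^2(\pi y/b)$ fixes $y$ up to $y\mapsto b-y$, and then $\phi_{1,1}$ fixes $\sin(\pi x/a)$; this is Proposition~\ref{prop:eigendisc}. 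Also, the $(1,2)$ mode drops out simply because $\langle 1,\phi_{m,n}\rangle=0$ when $m$ or $n$ is even, not for the reason you give.

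Two things in your favour:
\begin{itemize}
\item Recovering the eigenspace projections by meromorphic continuation of $\sum_{k\ge1}u_ks^k/k!=\sum_{m,n\ \mathrm{odd}}\frac{16}{\pi^2mn}\phi_{m,n}\frac{s}{\lambda_{m,n}-s}$ is a legitimate alternative to the paper's $k\to\infty$ peeling. The series converges absolutely because $\sum\frac{1}{mn\lambda_{m,n}}<\infty$.
\item Your triangle route via $D_3$-invariants is correct and cleaner than the paper's cubic-plus-discriminant count. Set $Y=y-\frac b3$, $p_2=x^2+Y^2$ and $p_3=Y^3-3x^2Y$. In the paper's normalization, $u_1=\frac{1}{4b}\bigl(p_3-bp_2+\frac{4b^3}{27}\bigr)$ and $u_2=\frac{u_1}{24}\bigl(\frac{4b^2}{3}-3p_2\bigr)$, which has degree $5$, not $6$. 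So on $u_1=c_1$ the value of $u_2$ is affine in $p_2$ with nonzero slope, and $(p_2,p_3)$ is determined. The fibres of $(p_2,p_3)$ are exactly the $D_3$-orbits, of sizes $6$, $3$, $1$.
\end{itemize}
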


Knowing how the level curves of the exit time moments intersect, we return to our motivating question from above. For one of our non-rectangular domains, suppose we are given values $c_1$ and $c_2$ for the first two exit time moments at some unknown location $(x,y)$. According to Theorem \ref{thm:main}, we can reduce the number of possible locations to a finite number of explicitly known points. Moreover, since the variance of the exit time satisfies
\[
\textup{Var}^{(x,y)}[\tau]=\mathbb E^{(x,y)}[\tau^2]-\left(\mathbb E^{(x,y)}[\tau]\right)^2,
\]
knowing values for the first two exit time moments is equivalent to knowing values for the mean exit time and the variance of the exit time. We therefore have the following.

\begin{corollary}[Location Detection]\label{cor:main} Let $D$ be a non-rectangular domain from Table \ref{tab:domains}. Given values of the first two moments of the exit time at some unknown point $(x,y)$ in $D$, one can detect the location $(x,y)$ up to symmetry. Equivalently, given the mean exit time and the variance of the exit time at some unknown point $(x,y)$ in $D$, one can detect the location $(x,y)$ up to symmetry.

If $D=\mathcal R$ is a rectangular domain as in Table \ref{tab:domains} and values of the full sequence of exit time moments are given at some unknown point $(x,y)$ in $\mathcal R$, one can detect the location $(x,y)$ up to symmetry.
\end{corollary}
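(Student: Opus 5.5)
The corollary is essentially a restatement of Theorem \ref{thm:main}, and we plan to deduce it directly from that theorem together with the symmetry structure of each domain. Let $(x_0,y_0)\in D$ be the unknown location and set $c_1=\mathbb{E}^{(x_0,y_0)}[\tau_D]$ and $c_2=\mathbb{E}^{(x_0,y_0)}[\tau_D^2]$. Both are positive and finite: for the bounded domains this is standard, and for the unbounded domains $\mathcal P,\mathcal H,\mathcal W$ the angle restrictions in Table \ref{tab:domains} are exactly what guarantees finiteness of the second moment. The level curves $\{\mathbb{E}[\tau_D]=c_1\}$ and $\{\mathbb{E}[\tau_D^2]=c_2\}$ then intersect, since both contain $(x_0,y_0)$, so Theorem \ref{thm:main} applies. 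It tells us that the intersection is a finite set of points, explicitly determined by $(c_1,c_2)$, whose cardinality is $1$, the number of symmetry axes, or twice that number.

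The key step is to identify this finite set as the orbit of $(x_0,y_0)$ under the symmetry group $G$ of $D$. The groups are as follows: for $\mathcal E$ and $\mathcal R$, the reflections in the two coordinate axes; for $\mathcal P$, $\mathcal H$ and $\mathcal W$, a single reflection; for $\mathcal T$, the dihedral group of order six.

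One inclusion is immediate. Brownian motion is invariant under isometries, so $\mathbb{E}^{g(x,y)}[\tau_D^k]=\mathbb{E}^{(x,y)}[\tau_D^k]$ for every $g\in G$. Alternatively, this follows from uniqueness of solutions to the Poisson hierarchy $\Delta u_k=-2k\,u_{k-1}$ with zero boundary values, since the reflected solution solves the same problem. Hence the orbit $G\cdot(x_0,y_0)$ lies in the intersection.

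The reverse inclusion is a counting argument. Orbit sizes are $1$, $|G|/2$, or $|G|$, according to whether the point is fixed by all of $G$, lies on a symmetry axis, or is generic. Theorem \ref{thm:main} caps the intersection at exactly these sizes. The delicate point is to check that the case distinctions in the theorem match the orbit types: for instance, when the intersection has two points for $\mathcal E$, they should be a reflection pair on an axis. We expect the proof of Theorem \ref{thm:main} to describe the intersection points explicitly as reflections of one another, and we will read the matching off from that description. Given the matching, the intersection equals $G\cdot(x_0,y_0)$, so the data $(c_1,c_2)$ determine the location up to symmetry.

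The equivalent formulation is a change of variables. The map $(c_1,c_2)\mapsto(c_1,\,c_2-c_1^2)$ is a bijection onto the pairs (mean, variance), because $\textup{Var}^{(x,y)}[\tau]=\mathbb{E}^{(x,y)}[\tau^2]-(\mathbb{E}^{(x,y)}[\tau])^2$. Knowing the mean and variance is therefore equivalent to knowing $c_1$ and $c_2$.

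For $\mathcal R$ the argument is identical, using the full sequence $c_k=\mathbb{E}^{(x_0,y_0)}[\tau^k]$ and the rectangular part of Theorem \ref{thm:main}, with $G$ the group generated by the reflections in the lines $x=a/2$ and $y=b/2$. The main obstacle is the orbit-matching step above; everything else is bookkeeping.
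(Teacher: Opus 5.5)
Your proposal is correct and follows the paper's route: the corollary is read off from Theorem \ref{thm:main} together with the identity $\textup{Var}^{(x,y)}[\tau]=\mathbb{E}^{(x,y)}[\tau^2]-(\mathbb{E}^{(x,y)}[\tau])^2$. The orbit-matching step you anticipate is supplied by the proofs of Theorems \ref{thm:ellipse}, \ref{thm:parabola}, \ref{thm:mainreshyp}, \ref{thm:wedge}, \ref{th:eqtr} and \ref{thm:rectangle}, which exhibit the intersection explicitly as the reflection/rotation orbit of one solution.

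One small caution: your alternative justification of symmetry invariance via uniqueness for the Poisson hierarchy fails for the unbounded domains $\mathcal P,\mathcal H,\mathcal W$, where the paper stresses that uniqueness is lost (and the paper's normalization is $-\Delta u_k=k\,u_{k-1}$). The isometry invariance of Brownian motion, or direct inspection of the explicit formulas, suffices there instead.
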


We end our paper by establishing a number of location detection results on general domains. We show that if a certain collection of orthogonal projections separates points in a domain, or in a fundamental subdomain, then one can detect location, possibly up to symmetry. We direct the reader to Section \ref{secg:gen} for the precise results.

\subsection*{Methodology and structure}
Associated to the sequence of exit time moments is a hierarchy of PDE problems we refer to as the ``Poisson hierarchy.'' To be precise, let $D\subseteq \mathbb{R}^2$ be a bounded Lipschitz domain and let $u_k(x,y)=\mathbb{E}^{(x,y)}[\tau_D^k]$ be the $k$th exit time moment. Let $u_0=1$. Then
\begin{equation}\label{eq:poisshier}
\left\{
\begin{aligned}
            -\Delta u_k &= k u_{k-1} \text{ in } D, \\
            u_k &= 0 \text{ on } \partial D. 
\end{aligned}
\right.
\end{equation}
In particular, when $k=1$, one has
       \[
       \left\{
        \begin{aligned}
            -\Delta u_1 &= 1 \text{ in } D, \\
            u_1 &= 0  \text{ on } \partial D. 
        \end{aligned}
        \right.
        \]
See \cite{DLM, KMM, M2}. By standard existence and uniqueness theorems, $u_k(x,y)=\mathbb{E}^{(x,y)}[\tau_D^k]$ is the only solution to \eqref{eq:poisshier}.
For bounded domains then, finding formulas for the exit time moments is equivalent to solving the PDE problems iteratively. If $D$ is unbounded, things become more subtle. If we know that $u_k(x,y)=\mathbb{E}^{(x,y)}[\tau_D^k]$ is everywhere finite, then by H\"older's inequality, $u_{k-1}(x,y)=\mathbb{E}^{(x,y)}[\tau_D^{k-1}]$ is also everywhere finite and \eqref{eq:poisshier} still holds. But there may be other functions that also satisfy the PDE problem. That is, for unbounded domains, we lose uniqueness. Thus, even if we manage to find a function that satisfies \eqref{eq:poisshier}, it is not necessarily the $k$th exit time moment. It is precisely this uniqueness failure that makes it challenging to find formulas for the exit time moments in unbounded domains. Since parabolic domains can be expressed as a nested union of elliptic domains, the parabolic moment formulas in Theorem \ref{thm:main} arise naturally as limits of elliptic moment formulas. In the hyperbolic and wedge settings, no such reduction exists. To handle these geometries, we rely on conformal mappings, the Schwarz reflection principle, the minimum principle for harmonic and superharmonic functions, and properties of entire harmonic functions. We have intentionally taken a direct and analytical approach to our problems, bypassing more specialized machinery (e.g., the theory of submartingales). Not only does this keep the presentation relatively self-contained, but we also believe it makes our paper more accessible and of wider interest to a general audience.

Our paper contributes to a growing body of literature connecting exit time moments and their integrals to geometric analysis. For early foundational work, we refer the reader to \cite{KMM}. For connections to comparison geometry, see \cite{CLM}, \cite{HMP1}, \cite{HMP2}, and \cite{M2}. For connections to spectral geometry, see \cite{BGJ}, \cite{CLM}, \cite{DLM}, and \cite{HMP3}. We also refer the reader to the survey papers \cite{GH} and \cite{M1} and the references therein. We are unaware of any work that explores the connection between exit time moments and location detection. With the present paper, that study is now underway.

The following sections of the paper contain our main results. We prove that exit time information determines location up to symmetry for elliptic (Section \ref{sect:ellipse}), parabolic (Section \ref{sect:parabola}), hyperbolic (Section \ref{sect:hyp}), wedge (Section \ref{sect:wedge}), equilateral triangular (Section \ref{sect:tri}), and rectangular domains (Section \ref{sect:rect}). Section \ref{secg:gen} contains our location detection results on general domains.

\section{Elliptic Domains}\label{sect:ellipse}
Throughout this section, we let $\mathcal E$ denote the elliptic domain
\[
\mathcal E=\left\{(x,y)\in \mathbb{R}^2:\left(\frac{x}{a}\right)^2+\left(\frac{y}{b}\right)^2<1\right\}
\]
for positive real numbers $a$ and $b$. The boundary of $\mathcal E$ is the ellipse $\left(\frac{x}{a}\right)^2+\left(\frac{y}{b}\right)^2=1$. See Figure \ref{fig:ellipse} below.

\begin{figure}[H]
\begin{center}
\begin{tikzpicture}[scale=1, font=\small]

    \def\a{3} 
    \def\b{1.8} 
    \pgfmathsetmacro{\c}{sqrt(\a*\a - \b*\b)} 

    \coordinate (Center) at (0,0);
    \coordinate (Top) at (0, \b);
    \coordinate (Bottom) at (0, -\b);
    \coordinate (Right) at (\a, 0);
    \coordinate (Left) at (-\a, 0);
    \coordinate (Focus1) at (-\c, 0);
    \coordinate (Focus2) at (\c, 0);


    \draw[thick, black] (-\a - 0.5, 0) -- (\a + 0.5, 0); 
    \draw[thick, black] (0, -\b - 0.5) -- (0, \b + 0.5); 

    \draw[thick, blue] (Center) ellipse (\a cm and \b cm);

    \node[blue] at (1.6, 1.0) {$\mathcal{E}$};

    
    \coordinate (P) at (45:{\a} and {\b}); 

    \fill (Right) circle (2.5pt) ;
    \fill (Top) circle (2.5pt);
    
    \node[above right] at (Top) {$(0, b)$};
   \node[below right, xshift=2pt] at (Right) {$(a, 0)$};

\end{tikzpicture}
\end{center}
\caption{A picture of the elliptic domain $\mathcal E$ and its boundary.}
\label{fig:ellipse}
\end{figure}

The main result of this section is the following.

\begin{theorem}\label{thm:ellipse}
Let $\mathcal E$ be an elliptic domain as above with $a\neq b$ and let $u_1(x,y)=\mathbb{E}^{(x,y)}[\tau_{\mathcal E}]$ and $u_2(x,y)=\mathbb{E}^{(x,y)}[\tau_{\mathcal E}^2]$  be the first two moments of the exit time from $\mathcal E$. Then
\begin{align*}
u_1(x,y)&=\frac{a^2b^2}{2(a^2 + b^2)}\left(1-\frac{x^2}{a^2} - \frac{y^2}{b^2}\right),\\
u_2(x,y)&= u_1(x,y)(Ax^2 + By^2 + C),
\end{align*}
where
\begin{align*}
    A &= -\frac{b^2(5a^2+b^2)}{6(a^4+6a^2b^2 + b^4)}, \\
    B&= -\frac{a^2(a^2 + 5b^2)}{6(a^4+6a^2b^2 + b^4)}, \\
    C&= \frac{a^2b^2(5a^4 + 26a^2b^2 + 5b^4)}{6(a^2+b^2)(a^4+6a^2b^2 + b^4)}.
\end{align*}
Given positive real numbers $c_1$ and $c_2$, if the level curves $u_1(x,y)=c_1$ and $u_2(x,y)=c_2$ intersect, then their intersection is  either four points, two points, or a single point determined by $c_1$ and $c_2$.
\end{theorem}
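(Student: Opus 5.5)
The proof has two parts: first verify the moment formulas through the Poisson hierarchy \eqref{eq:poisshier}, then analyze how the two level curves intersect.

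\textbf{Step 1: the formula for $u_1$.} Since $\mathcal E$ is bounded, uniqueness holds, so it suffices to check that the stated functions solve \eqref{eq:poisshier} for $k=1,2$. Write $q(x,y)=1-\frac{x^2}{a^2}-\frac{y^2}{b^2}$. Then
\[
\Delta q=-\frac{2}{a^2}-\frac{2}{b^2}=-\frac{2(a^2+b^2)}{a^2b^2},
\]
so $u_1=\frac{a^2b^2}{2(a^2+b^2)}q$ satisfies $-\Delta u_1=1$ in $\mathcal E$. It also vanishes on the ellipse.

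\textbf{Step 2: the formula for $u_2$.} The candidate $u_1(Ax^2+By^2+C)$ vanishes on $\partial\mathcal E$ automatically. Expand $-\Delta\big(u_1(Ax^2+By^2+C)\big)$ with the product rule. Matching it against $2u_1$ gives a linear system in $A,B,C$ from the coefficients of $x^2$, $y^2$ and $1$. Solving this system is routine algebra and should reproduce the stated values; note that $A,B<0<C$.

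\textbf{Step 3: reducing the intersection to an ellipse and a line.} On the level set $u_1=c_1$ we have $q=\kappa:=\frac{2(a^2+b^2)c_1}{a^2b^2}$. This is the ellipse $\frac{x^2}{a^2}+\frac{y^2}{b^2}=1-\kappa$, with $0<\kappa\le 1$. On this set, $u_2=c_2$ becomes
\[
Ax^2+By^2=\frac{c_2}{c_1}-C.
\]
Set $X=x^2\ge0$ and $Y=y^2\ge0$. The intersection then corresponds to solutions of two linear equations,
\[
\frac{X}{a^2}+\frac{Y}{b^2}=1-\kappa,\qquad AX+BY=\frac{c_2}{c_1}-C,
\]
restricted to the closed quadrant $X,Y\ge0$.

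\textbf{Step 4: the key nondegeneracy check.} The main obstacle is showing that these two lines are never parallel, so that the $(X,Y)$ solution is unique. That requires
\[
\det\begin{pmatrix}1/a^2 & 1/b^2\\ A & B\end{pmatrix}=\frac{B}{a^2}-\frac{A}{b^2}\neq0.
\]
Using the formulas,
\[
\frac{B}{a^2}-\frac{A}{b^2}=\frac{-(a^2+5b^2)+(5a^2+b^2)}{6(a^4+6a^2b^2+b^4)}=\frac{4(a^2-b^2)}{6(a^4+6a^2b^2+b^4)}.
\]
This is nonzero exactly because $a\ne b$, which explains that hypothesis.

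\textbf{Step 5: counting points.} Given that $(X,Y)$ is unique and lies in the quadrant (since the curves intersect), the points are $(\pm\sqrt X,\pm\sqrt Y)$, with $(X,Y)$ determined explicitly by $c_1,c_2$ through Cramer's rule. The count depends on which coordinates vanish:
\begin{itemize}
\item if $X,Y>0$, there are four points;
\item if exactly one of $X,Y$ is zero, there are two points;
\item if $X=Y=0$ (which forces $\kappa=1$, the center), there is one point.
\end{itemize}
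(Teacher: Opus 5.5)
Your proof is correct and follows essentially the same route as the paper. You verify the Poisson hierarchy by direct computation, then reduce the intersection to two linear equations in $x^2$ and $y^2$ whose unique solvability rests on $a\neq b$. Your determinant $\frac{B}{a^2}-\frac{A}{b^2}$ equals $-D/b^2=F/a^2$ in the paper's notation, so your nondegeneracy check is exactly the paper's observation that $D$ and $F$ are nonzero.
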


\begin{proof}
Direct computation using the given formulas for $u_1(x,y)$ and $u_2(x,y)$ shows that $u_1$ and $u_2$ solve the PDE problems
\begin{equation*}
\left\{
\begin{aligned}
-\Delta u_1 &= 1\ \textup{in }\mathcal E,\\
u_1 &= 0\ \textup{on }\partial \mathcal E,
\end{aligned}
\right.
\qquad\qquad\qquad 
\left\{
\begin{aligned}
-\Delta u_2 &= 2u_1\ \textup{in }\mathcal E,\\
u_2 &= 0\ \textup{on }\partial \mathcal E.
\end{aligned}
\right.
\end{equation*}
Then $u_1(x,y)=\mathbb{E}^{(x,y)}[\tau_{\mathcal E}]$ and $u_2(x,y)=\mathbb{E}^{(x,y)}[\tau_{\mathcal E}^2]$ as discussed in the Introduction (see \eqref{eq:poisshier}).

Next assume that the level curves $u_1(x,y) = c_1$ and $u_2(x,y) = c_2$ intersect for some positive constants $c_1$ and $c_2$. Solving the equation $u_1(x,y) = c_1$ for $x^2$ and $y^2$, we see
\begin{align*}
x^2 &= a^2\left(1 - \frac{2c_1(a^2 + b^2)}{a^2b^2} - \frac{y^2}{b^2}\right),\\
    y^2 &= b^2\left(1 - \frac{2c_1(a^2 + b^2)}{a^2b^2} - \frac{x^2}{a^2}\right).
\end{align*}
Each of the equations above can be substituted back into our equation $u_2(x,y) = c_2$ giving
\begin{equation}\label{eq:ellipsec2c1}
    \frac{c_2}{c_1} = Dx^2 + E = Fy^2 + G,
\end{equation}
for some constants $D,E,F,G$ each depending on $a$ and $b$:
\begin{align*}
    D &= \frac{
   2 b^2 (b^4 - a^4)}{3 (a^2 + b^2) (a^4 + 6 a^2 b^2 + b^4)}, \\
   E &= \frac{(a^2 + 5 b^2)  (2 a^4 b^2 + c_1(a^2 + b^2)^2) }{3 (a^2 + b^2) (a^4 + 6 a^2 b^2 + b^4)}, \\
   F &=  \frac{
   2 a^2 ( a^4 - b^4)}{3 (a^2 + b^2) (a^4 + 6 a^2 b^2 + b^4)}, \\
   G &= \frac{(5a^2 +  b^2)  (2 a^2 b^4  + c_1(a^2 + b^2)^2)}{3 (a^2 + b^2) (a^4 + 6 a^2 b^2 + b^4)}.
\end{align*}
Since $a\neq b$, the constants $D$ and $F$ are nonzero. In particular, we can solve \eqref{eq:ellipsec2c1} explicitly for $x^2$ and $y^2$ giving
\begin{align*}
x^2&=\frac{1}{D}\left(\frac{c_2}{c_1}-E\right),\\
y^2&=\frac{1}{F}\left(\frac{c_2}{c_1}-G\right).
\end{align*}
Since we assumed that the level curves $u_1(x,y) = c_1$ and $u_2(x,y) = c_2$ intersect, the expressions $\frac{1}{D}\left(\frac{c_2}{c_1}-E\right)$ and $\frac{1}{F}\left(\frac{c_2}{c_1}-G\right)$ must be nonnegative.
If $x^2=\frac{1}{D}\left(\frac{c_2}{c_1}-E\right)>0$ and $y^2=\frac{1}{F}\left(\frac{c_2}{c_1}-G\right)>0$, then we have reduced $(x,y)$ to one of four possible locations, reflection symmetric about the $x$- and $y$-axes. If $x^2=\frac{1}{D}\left(\frac{c_2}{c_1}-E\right)>0$ and $y^2=\frac{1}{F}\left(\frac{c_2}{c_1}-G\right)=0$, then we have reduced $(x,y)$ to two possible locations along the $x$-axis. These solutions are reflection symmetric across the $y$-axis. If $x^2=\frac{1}{D}\left(\frac{c_2}{c_1}-E\right)=0$ and $y^2=\frac{1}{F}\left(\frac{c_2}{c_1}-G\right)>0$, we have reduced $(x,y)$ to two possible locations along the $y$-axis. These solutions are reflection symmetric across the $x$-axis.  Finally, if $x^2=\frac{1}{D}\left(\frac{c_2}{c_1}-E\right)=0$ and $y^2=\frac{1}{F}\left(\frac{c_2}{c_1}-G\right)=0$, then $(x,y)=(0,0)$ and we are located at the origin.
\end{proof}

Recall that the variance of the exit time $\textup{Var}^{(x,y)}[\tau]$ satisfies
\[
\textup{Var}^{(x,y)}[\tau]=\mathbb E^{(x,y)}[\tau^2]-\left(\mathbb E^{(x,y)}[\tau]\right)^2.
\]
Thus, assigning values to $\mathbb E^{(x,y)}[\tau]$ and $\mathbb E^{(x,y)}[\tau^2]$ is equivalent to assigning values to $\mathbb E^{(x,y)}[\tau]$ and $\textup{Var}^{(x,y)}[\tau]$. We therefore have the following corollary phrased in the language of location detection.

\begin{corollary}\label{cor:locdetell}
Consider an elliptic domain $\mathcal E$ with $a\neq b$. Given values of the first two moments of the exit time at some unknown point $(x,y)$ in $\mathcal E$, one can detect the location $(x,y)$ up to symmetry. Equivalently, given the mean exit time and the variance of the exit time at some unknown point $(x,y)$ in $\mathcal E$, one can detect the location $(x,y)$ up to symmetry.
\end{corollary}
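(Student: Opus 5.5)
The corollary follows from Theorem \ref{thm:ellipse} together with the variance identity. The only work is to say what ``up to symmetry'' means and to check that the candidate set is exactly one orbit of the symmetry group of $\mathcal E$.

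Suppose the values $c_1=u_1(x,y)$ and $c_2=u_2(x,y)$ are given at an unknown point $(x,y)\in\mathcal E$. Both are positive, since $u_1>0$ in $\mathcal E$ by the explicit formula and $u_2=\mathbb E[\tau^2]\ge u_1^2>0$. The level curves $u_1=c_1$ and $u_2=c_2$ intersect, because the true location lies on both. From the proof of Theorem \ref{thm:ellipse}, any point of the intersection satisfies
\[
x^2=\frac{1}{D}\left(\frac{c_2}{c_1}-E\right),\qquad y^2=\frac{1}{F}\left(\frac{c_2}{c_1}-G\right),
\]
where $D,E,F,G$ depend only on $a$, $b$ and $c_1$, and $D,F\neq 0$ because $a\neq b$. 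So $|x|$ and $|y|$ are computable from the data. The possible locations are therefore exactly the points $(\pm|x|,\pm|y|)$.

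This set is the orbit of $(x,y)$ under the group generated by the reflections across the coordinate axes. For $a\neq b$ that group is the full symmetry group of $\mathcal E$. Since $u_1$ and $u_2$ are even in $x$ and in $y$, every point of the orbit really does produce the same data. Hence the data determine the location precisely up to symmetry, with no spurious candidates.

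For the equivalent formulation, the identity $\textup{Var}^{(x,y)}[\tau]=u_2-u_1^2$ gives a bijection between the pairs $(u_1,u_2)$ and $(u_1,\textup{Var})$. Knowing the mean exit time and the variance therefore recovers $c_1$ and $c_2$, and the first argument applies.

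There is no real obstacle here. The only points needing care are that the intersection is nonempty, which holds automatically because the true point produces the data, and that every candidate lies in the symmetry orbit, which is immediate from the formulas for $x^2$ and $y^2$.
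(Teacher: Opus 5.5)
Your proposal is correct and follows the paper's route: the corollary is read off from Theorem \ref{thm:ellipse}, whose proof expresses $x^2$ and $y^2$ explicitly in terms of $c_1,c_2$, combined with the identity $\textup{Var}^{(x,y)}[\tau]=u_2-u_1^2$ for the equivalent formulation. Your extra checks (positivity of $c_1,c_2$, nonemptiness of the intersection, and evenness of $u_1,u_2$ ensuring every point of the orbit is a genuine solution) only make explicit what the paper leaves implicit.
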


\section{Parabolic Domains}\label{sect:parabola}

Throughout this section we let $\mathcal P$ denote the parabolic domain
\[
\mathcal P=\{(x,y)\in \mathbb{R}^2:y>cx^2\},
\]
where $c>0$. The boundary of $\mathcal P$ is the parabola $y=cx^2$. See Figure \ref{fig:parabola} below.

\begin{figure}[H]
    \centering
\begin{tikzpicture}[scale=0.6]

  \draw[thick, blue, domain=-4:4, samples=100] plot (\x, {0.5*\x*\x});
  \node[left, blue] at (3.6, 7.76) {$\mathcal{P}$};
    \draw[thick, black] (0, -1) -- (0, 10); 
    \draw[thick, black] (-5, 0) -- (5, 0);

\end{tikzpicture}
\caption{The parabolic domain $\mathcal P$ together with its boundary.}
\label{fig:parabola}
\end{figure}

This parabolic setting presents a slight difficulty. Since $\mathcal P$ is unbounded, we cannot directly solve \eqref{eq:poisshier} to compute the exit time moments of $\mathcal P$. To get around this issue, we appeal to our results in elliptic domains. Our strategy is to first show that $\mathcal P$ can be exhausted through a nested union of elliptic domains. A picture of this exhaustion is shown in Figure \ref{fig:ellexh} below. We then use this exhaustion to recover the exit time moments of $\mathcal P$. 

\begin{figure}[H]
\begin{center}
\begin{tikzpicture}[scale=0.6]

  \draw[thick, blue, domain=-4:4, samples=100] plot (\x, {0.5*\x*\x});
  \node[left, blue] at (3.6, 7.76) {$\mathcal{P}$};


  \draw[thick, black] (0,1) ellipse ({sqrt(2)*sqrt(1/2)} and 1);
  \node[black, above] at (0,.5) {\small $\mathcal{E}_1$};

  \draw[thick, black] (0,2) ellipse ({sqrt(2)*sqrt(2/2)} and 2);
  \node[black, above] at (0,2.5) {\small $\mathcal{E}_2$};

  \draw[thick, black] (0,3) ellipse ({sqrt(2)*sqrt(3/2)} and 3);
  \node[black, above] at (0,4.5) {\small $\mathcal{E}_3$};

  \node[black] at (0,7) {$\vdots$};


\end{tikzpicture}
\end{center}
\caption{An illustration of our elliptic exhaustion $\mathcal E_1 \subseteq \mathcal E_2 \subseteq \cdots \subseteq \mathcal P$.}
\label{fig:ellexh}
\end{figure}
\begin{proposition}\label{Prop:exhE2P}
Let $\mathcal E_n$ denote the elliptic domain
\[
\mathcal E_n=\left\{(x,y)\in \mathbb{R}^2:\frac{2cx^2}{n}+\frac{(y-n)^2}{n^2}<1\right\}.
\]
Then $\mathcal E_n \subseteq \mathcal E_{n+1}$ for each $n\geq 1$ and $\bigcup\limits_{n=1}^{\infty} \mathcal E_n=\mathcal P$.
\end{proposition}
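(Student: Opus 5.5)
Rewrite the defining inequality of $\mathcal E_n$ in a form that makes both claims transparent. Multiplying by $n^2$, the condition $(x,y)\in\mathcal E_n$ is equivalent to
\[
2cnx^2+(y-n)^2<n^2 \iff 2cnx^2+y^2-2ny<0 \iff y^2<2n\,(y-cx^2).
\]
Define $f_n(x,y)=2n(y-cx^2)-y^2$. Then $\mathcal E_n=\{f_n>0\}$. The key observation is that $f_n$ depends on $n$ only through the factor multiplying $y-cx^2$, which is the defining function of $\mathcal P$.

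For containment in $\mathcal P$, note that if $(x,y)\in\mathcal E_n$, then $2n(y-cx^2)>y^2\ge 0$. This gives $y>cx^2$, so $\mathcal E_n\subseteq\mathcal P$.

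For the nesting, let $(x,y)\in\mathcal E_n$. We already know $y-cx^2>0$, so
\[
f_{n+1}(x,y)=f_n(x,y)+2(y-cx^2)>f_n(x,y)>0,
\]
and hence $(x,y)\in\mathcal E_{n+1}$.

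For the exhaustion, fix $(x,y)\in\mathcal P$ and set $\delta=y-cx^2>0$. Then $f_n(x,y)=2n\delta-y^2$, which is positive as soon as $n>y^2/(2\delta)$. So $(x,y)\in\mathcal E_n$ for all large $n$, giving $\mathcal P\subseteq\bigcup_n\mathcal E_n$. The reverse inclusion follows from $\mathcal E_n\subseteq\mathcal P$.

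The only step needing care is the algebraic rewriting in the first display: one must confirm that expanding $(y-n)^2$ cancels the $n^2$ term, leaving $2ny$. Everything else follows immediately from the resulting form $y^2<2n(y-cx^2)$.
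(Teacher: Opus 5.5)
Your proof is correct and follows essentially the same route as the paper. The paper's key computation is the identity $\frac{n}{2}\bigl(1-\frac{(y-n)^2}{n^2}\bigr)=y-\frac{y^2}{2n}$, i.e.\ $\mathcal E_n=\{cx^2<y-\frac{y^2}{2n}\}$, which is your $y^2<2n(y-cx^2)$ divided by $2n$; both the nesting and the exhaustion then follow from monotonicity in $n$, except that the paper reaches the nesting through a longer chain of inequalities that your up-front normalization reduces to one line.
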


\begin{proof}
We first show that $\mathcal E_n \subseteq \mathcal E_{n+1}$. Suppose that $(x,y) \in \mathcal E_n$ so that $\displaystyle \frac{2cx^2}{n}+\frac{(y-n)^2}{n^2}<1$. Note that
\begin{align*}
\frac{2cx^2}{n+1}&=\frac{n}{n+1}\frac{2cx^2}{n}\\
&<\frac{n}{n+1}\left(1-\frac{(y-n)^2}{n^2}\right)\\
&=\frac{2y}{n+1} - \frac{y^2}{n(n+1)}\\
&=\frac{y^2}{(n+1)^2} +1 -\frac{y^2}{n(n+1)}-\frac{(y-(n+1))^2}{(n+1)^2} \\
&=1-\frac{(y-(n+1))^2}{(n+1)^2}+y^2\left(\frac{1}{(n+1)^2}-\frac{1}{n(n+1)} \right)\\
&<1-\frac{(y-(n+1))^2}{(n+1)^2}.
\end{align*}
Therefore,  $\displaystyle \frac{2cx^2}{n+1}+\frac{(y-(n+1))^2}{(n+1)^2}<1$ and $(x,y)\in \mathcal E_{n+1}$ as desired. We next show that $\bigcup\limits_{n=1}^{\infty} \mathcal E_n=\mathcal P$. Suppose that $(x,y)\in \mathcal E_n$ for some $n\geq 1$. Then
\begin{equation}\label{eq:unionEnP}
cx^2<\frac{n}{2}\left(1-\frac{(y-n)^2}{n^2}\right)=\frac{n}{2}\left(-\frac{y^2}{n^2}+\frac{2y}{n} \right)=y-\frac{y^2}{2n} < y.
\end{equation}
Thus $cx^2<y$ and we conclude that $(x,y)\in \mathcal P$. 

Now suppose that $(x,y)\in \mathcal P$ so that $y>cx^2$. Choose $n$ such that $y-\frac{y^2}{2n}>cx^2$. The computation in \eqref{eq:unionEnP} then shows
\begin{align*}
    cx^2&<\frac{n}{2}\left(1-\frac{(y-n)^2}{n^2}\right).
\end{align*} Consequently, 
\begin{equation*}
    \frac{2cx^2}{n} + \frac{(y-n)^2}{n^2} < 1,
\end{equation*}
and $(x,y)\in \mathcal E_n$ as desired. 
\end{proof}

We next use Proposition \ref{Prop:exhE2P} to show that the exit times of our parabolic domain are the pointwise limit of the exit times from elliptic domains using our elliptic exhaustion.

\begin{proposition}\label{prop:limtau}
Let $\tau_{\mathcal P}$ denote the exit time of a Brownian motion from $\mathcal P$. With $\mathcal E_n$ as in Proposition \ref{Prop:exhE2P}, we have
\begin{equation*}
    \lim_{n \to \infty} \tau_{\mathcal E_n} = \tau_{\mathcal P}.
\end{equation*}
\end{proposition}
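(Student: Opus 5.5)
We prove the limit pathwise: fix a Brownian sample path $\omega$ starting at $(x,y)\in\mathcal P$ and show $\tau_{\mathcal E_n}(\omega)\to\tau_{\mathcal P}(\omega)$. Since $\tau_{\mathcal E_n}$ is only defined once $(x,y)\in\mathcal E_n$, we work with $n$ large enough; this is possible by Proposition \ref{Prop:exhE2P}. The argument splits into three steps: monotonicity, an upper bound, and the matching lower bound.

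\textbf{Monotonicity and upper bound.} By Proposition \ref{Prop:exhE2P}, $\mathcal E_n\subseteq\mathcal E_{n+1}\subseteq\mathcal P$. A path still inside $\mathcal E_n$ is therefore still inside $\mathcal E_{n+1}$ and inside $\mathcal P$, which gives
\[
\tau_{\mathcal E_n}\le\tau_{\mathcal E_{n+1}}\le\tau_{\mathcal P}.
\]
Hence the sequence $\tau_{\mathcal E_n}(\omega)$ is nondecreasing and bounded above by $\tau_{\mathcal P}(\omega)$. Its limit $L(\omega)$ exists in $[0,\infty]$ and satisfies $L\le\tau_{\mathcal P}$.

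\textbf{Reverse inequality.} Suppose, for contradiction, that $L(\omega)<\tau_{\mathcal P}(\omega)$, and fix $T$ with $L<T<\tau_{\mathcal P}$. Then $X_t(\omega)\in\mathcal P$ for all $t\in[0,T]$. By continuity of the path, $K=\{X_t(\omega):0\le t\le T\}$ is a compact subset of the open set $\mathcal P$. The sets $\mathcal E_n$ are open, increasing, and cover $\mathcal P$, so they form an open cover of $K$. By compactness and nestedness, $K\subseteq\mathcal E_N$ for a single $N$. Then the path stays in $\mathcal E_N$ throughout $[0,T]$.

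\textbf{Main obstacle.} The delicate point is what "stays in $\mathcal E_N$ on $[0,T]$" yields. Using $\tau=\inf\{t\ge0:X_t\notin D\}$, it gives $\tau_{\mathcal E_N}\ge T$. Strictly, since $K$ is compact and contained in the open set $\mathcal E_N$, the path stays in $\mathcal E_N$ slightly past $T$ as well, so $\tau_{\mathcal E_N}>T$. Either way,
\[
L\ge\tau_{\mathcal E_N}\ge T>L,
\]
which is a contradiction. This requires only continuity of the sample path, not any Brownian-specific estimate.

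\textbf{Infinite case.} If $\tau_{\mathcal P}(\omega)=\infty$, the same argument applies with arbitrary finite $T$. It shows $L\ge T$ for every $T$, so $L=\infty$. We conclude that $\tau_{\mathcal E_n}\uparrow\tau_{\mathcal P}$ pointwise on every continuous path, and hence almost surely.
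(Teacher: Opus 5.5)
Your proposal is correct and follows essentially the same route as the paper. The nesting $\mathcal E_n\subseteq\mathcal E_{n+1}\subseteq\mathcal P$ gives monotonicity and $\lim_n\tau_{\mathcal E_n}\le\tau_{\mathcal P}$, and the reverse inequality comes from covering the compact path segment $\{X_s(\omega):0\le s\le T\}$ by a single $\mathcal E_N$. The differences are cosmetic: you argue by contradiction where the paper lets $t<\tau_{\mathcal P}(\omega)$ be arbitrary, and you spell out the strict inequality $\tau_{\mathcal E_N}>T$ and the case $\tau_{\mathcal P}(\omega)=\infty$, which the paper leaves implicit.
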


\begin{proof}
Since $\mathcal E_n \subseteq \mathcal E_{n+1}\subseteq \mathcal P$ for each $n$, we have that
\begin{equation*}
    \tau_{\mathcal E_1}\leq \tau_{\mathcal E_2} \leq \cdots \leq \tau_{\mathcal P}
\end{equation*}
which immediately implies that
\[
\lim_{n\to \infty}\tau_{\mathcal E_n}\leq \tau_{\mathcal P}.
\]
For the reverse inequality, let $X_t$ denote Brownian motion starting at $(x,y)\in \mathcal P$. Let $\Omega$ be the collection of Brownian sample paths starting at $(x,y)$, and let $\mathbb{P}^{(x,y)}$ denote the measure charging Brownian paths starting at $(x,y)$. Suppose that $\omega\in \Omega$ is a realization of a continuous sample path and let $t$ be such that $t < \tau_{\mathcal P}(\omega)$. Then
\[
K=\{X_s(\omega):0\leq s\leq t\}
\]
is a compact set since Brownian sample paths are continuous. Since $K\subseteq\bigcup\limits_{n=1}^{\infty} \mathcal E_n$, we may pass to a finite subcover, and so for some positive integer $N$ we have $K\subseteq \mathcal E_N$. See Figure \ref{fig:EN} below. It follows that
\[
t  <\tau_{\mathcal E_N}(\omega)\leq \tau_{\mathcal P}(\omega).
\]
Since the $\tau_{\mathcal E_n}$ are increasing, it follows that for each $n\geq N$ we have
\[
t  <\tau_{\mathcal E_n}(\omega)\leq \tau_{\mathcal P}(\omega).
\]
Since $t<\tau_{\mathcal P}(\omega)$ was arbitrary, we conclude
\[
    \lim_{n \to \infty} \tau_{\mathcal E_n}(\omega) = \tau_{\mathcal P}(\omega)
\]
as desired.
\end{proof}

\begin{center}
\begin{figure}[H]
    \centering

\begin{tikzpicture}[scale=.75]

  \draw[thick, blue, solid, domain=-6:6, samples=100] plot (\x, {-4 + (2/9)*\x*\x});

  \draw[thick, black] (0,0) ellipse (3 and 4);

  \node[above] at (0,3) {\small$\mathcal E_N$};

  \def\cx{0}
  \def\cy{0}
  \def\tx{0} 
  \def\ty{0} 
  \def\passed{0}
  \def\stopped{0}
  \xdef\mypath{(0,0)}

  \pgfmathsetseed{106} 

  \foreach \i in {1,...,3000} {
    \ifnum\stopped=0
      \pgfmathsetmacro{\dx}{0.1*rand}
      \pgfmathsetmacro{\dy}{0.1*rand}
      \pgfmathsetmacro{\nx}{\cx + \dx}
      \pgfmathsetmacro{\ny}{\cy + \dy}
      
      \ifnum\passed=0
        \pgfmathsetmacro{\val}{(\nx/3)^2 + (\ny/4)^2}
        \pgfmathparse{\val < 0.6 ? 1 : 0}
        \ifnum\pgfmathresult=0
          \xdef\passed{1}
          \xdef\tx{\cx} 
          \xdef\ty{\cy}
        \fi
      \fi
      
      \pgfmathsetmacro{\fnew}{\ny + 4 - (2/9)*\nx*\nx}
      \pgfmathparse{\fnew < 0 ? 1 : 0}
      \ifnum\pgfmathresult=1
        \xdef\stopped{1} 
        
        \pgfmathsetmacro{\fold}{\cy + 4 - (2/9)*\cx*\cx}
        \pgfmathsetmacro{\t}{\fold / (\fold - \fnew)}
        \pgfmathsetmacro{\nx}{\cx + \t*\dx}
        \pgfmathsetmacro{\ny}{\cy + \t*\dy}
      \fi
      
      \xdef\mypath{\mypath -- (\nx,\ny)}
      \xdef\cx{\nx}
      \xdef\cy{\ny}
    \fi
  }

  \draw[black, ultra thin, line join=round] \mypath;

  \filldraw[black] (0,0) circle (1.5pt) node[left] {$(x,y)$};
  
  \filldraw[red] (\tx,\ty) circle (1.5pt) node[below  = 15pt, left = -6 pt] {$X_t(\omega)$};

  \filldraw[blue] (\cx,\cy) circle (1.5pt) node[below right] {$X_{\tau_{\mathcal P}}(\omega)$};
\node[left, blue] at (5.3, 3.46) {$\mathcal{P}$};
\end{tikzpicture}
\caption{The elliptic domain $\mathcal E_N$ from the proof of Proposition \ref{prop:limtau}.}
\label{fig:EN}
\end{figure}
\end{center}
With the previous result in hand, we can concisely express the exit time moments of $\mathcal P$ as limits of the exit time moments of $\mathcal E_n$.
\begin{proposition}\label{Prop:Pformulasu1u2}
Let $\mathcal P$ denote our usual parabolic domain and let $u_1^{\mathcal P}(x,y)=\mathbb{E}^{(x,y)}[\tau_{\mathcal P}]$ and $u_2^{\mathcal P}(x,y)=\mathbb{E}^{(x,y)}[\tau_{\mathcal P}^2]$ denote the first two exit time moments of Brownian motion in $\mathcal P$. Then
\begin{align*}
u_1^{\mathcal P}(x,y)&=\frac{1}{2c}(y-cx^2),\\
u_2^{\mathcal P}(x,y)&=\frac{1}{6c}\left(5y-cx^2+\frac{5}{c}\right)u_1^{\mathcal P}(x,y).
\end{align*}
\end{proposition}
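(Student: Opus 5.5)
The plan is to combine Proposition \ref{prop:limtau} with the Monotone Convergence Theorem, and then pass to the limit in the explicit elliptic formulas of Theorem \ref{thm:ellipse}. Since $0\le \tau_{\mathcal E_1}\le \tau_{\mathcal E_2}\le\cdots$ and $\tau_{\mathcal E_n}\to\tau_{\mathcal P}$ pointwise on (almost every) sample path, the same monotone convergence holds for $\tau_{\mathcal E_n}^k$ with $k=1,2$. Monotone convergence then gives
\[
u_k^{\mathcal P}(x,y)=\lim_{n\to\infty}\mathbb E^{(x,y)}[\tau_{\mathcal E_n}^k]
\]
for each fixed $(x,y)\in\mathcal P$, where for $n$ large $(x,y)\in\mathcal E_n$ by Proposition \ref{Prop:exhE2P}. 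This already allows the value $+\infty$, so finiteness will come out of the computation rather than being needed as input.

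Next I would apply Theorem \ref{thm:ellipse} to $\mathcal E_n$. This ellipse is centered at $(0,n)$ with semi-axes $a_n=\sqrt{n/(2c)}$ and $b_n=n$. Exit time moments are translation invariant, so $\mathbb E^{(x,y)}[\tau_{\mathcal E_n}^k]=u_k^{(a_n,b_n)}(x,y-n)$. We have $a_n\ne b_n$ for all but at most one $n$, and discarding that $n$ does not affect the limit. For the first moment,
\[
u_1^{(n)}=\frac{a_n^2b_n^2}{2(a_n^2+b_n^2)}\Bigl(1-\frac{x^2}{a_n^2}-\frac{(y-n)^2}{n^2}\Bigr).
\]
The bracket equals $\frac{2y}{n}-\frac{y^2}{n^2}-\frac{2cx^2}{n}$. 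The prefactor is $\frac{n^3/(2c)}{2(n/(2c)+n^2)}\sim \frac{n}{4c}$. Their product tends to $\frac{1}{2c}(y-cx^2)$, which is the claimed $u_1^{\mathcal P}$.

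For the second moment, $u_2^{(n)}=u_1^{(n)}\bigl(A_nx^2+B_n(y-n)^2+C_n\bigr)$, and I need the limit of the factor $A_nx^2+B_n(y-n)^2+C_n$. Here $a_n^2=n/(2c)$ is much smaller than $b_n^2=n^2$. For $A_n$, the $b_n^6$ terms dominate, so $A_n\to -\tfrac16$. The main obstacle is the cancellation between $B_n(y-n)^2$ and $C_n$: each is of order $n$, and the limit requires expanding both to constant order. I would write $B_n=-\frac{a_n^2(a_n^2+5b_n^2)}{6(a_n^4+6a_n^2b_n^2+b_n^4)}$ as
\[
-\frac{5}{6}\frac{a_n^2}{b_n^2}\bigl(1+\tfrac{a_n^2}{5b_n^2}\bigr)\bigl(1-6\tfrac{a_n^2}{b_n^2}+O(n^{-2})\bigr),
\]
and expand $C_n$ similarly in powers of $a_n^2/b_n^2=1/(2cn)$. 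With $(y-n)^2=n^2-2ny+y^2$, I would then collect the $n$, $1$ and $y$ terms.

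Heuristically, the $n$ terms should cancel, because $B_nn^2\approx -\frac{5n}{12c}$ while $C_n\approx 5a_n^2/6=\frac{5n}{12c}$. What should remain is $\frac{5y}{6c}$ from $-2nyB_n$, plus constant corrections from the next-order terms. These constants should combine to $\frac{5}{6c^2}$, which would give the claimed factor $\frac{1}{6c}\bigl(5y-cx^2+\frac5c\bigr)$. As a consistency check, I would verify directly that this candidate satisfies $-\Delta u_2=2u_1^{\mathcal P}$ in $\mathcal P$ and vanishes on $\partial\mathcal P$. That check pins down the $x^2$ and $y$ coefficients. The constant $\frac{5}{6c^2}$, however, is not determined by the PDE, since one could add multiples of $u_1^{\mathcal P}$. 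So it must come from the careful second-order expansion of $B_n$ and $C_n$, and this is where I expect the real work to be.
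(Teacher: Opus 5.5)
This is essentially the paper's proof: monotone convergence through Proposition~\ref{prop:limtau}, then letting $n\to\infty$ in the elliptic formulas with $a_n^2=n/(2c)$ and $b_n=n$. The expansion you defer does close, since $B_n n^2=-\frac{5n}{12c}+\frac{29}{24c^2}+O(1/n)$ and $C_n=\frac{5n}{12c}-\frac{3}{8c^2}+O(1/n)$ sum to $\frac{5}{6c^2}$.

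One correction to your aside. Because $-\Delta u_1^{\mathcal P}=1\neq 0$, adding $Ku_1^{\mathcal P}$ changes $-\Delta u_2$ by $K$, so the PDE does fix the constant within the ansatz $u_1^{\mathcal P}(\alpha x^2+\beta y+\gamma)$: it forces $\alpha=-\tfrac16$, $\beta=\tfrac{5}{6c}$ and $\gamma=\beta/c=\tfrac{5}{6c^2}$. That check still does not prove the formula, because uniqueness fails on the unbounded domain $\mathcal P$; only the limit argument identifies this function with $\mathbb E^{(x,y)}[\tau_{\mathcal P}^2]$.
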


\begin{proof}
Let $\mathcal E_n$ be as in Proposition \ref{Prop:exhE2P}. For $(x,y)\in \mathcal P$ let $X_t$ denote Brownian motion starting at $(x,y)$. As before, let $\Omega$ be the collection of Brownian sample paths starting at $(x,y)$, and let $\mathbb{P}^{(x,y)}$ denote the measure charging Brownian paths starting at $(x,y)$. Since the exit times $\tau_{\mathcal E_n}$ are increasing, Proposition \ref{prop:limtau} and The monotone convergence theorem give
\[
\lim_{n\to \infty}\mathbb{E}^{(x,y)}[\tau_{\mathcal E_n}]=\lim_{n\to \infty}\int_{\Omega}\tau_{\mathcal E_n}\,d\mathbb{P}^{(x,y)} =\int_{\Omega}\tau_{\mathcal P}\,d\mathbb{P}^{(x,y)}=\mathbb{E}^{(x,y)}[\tau_{\mathcal P}].
\]
Denoting $u_1^{\mathcal E_n}(x,y)=\mathbb{E}^{(x,y)}[\tau_{\mathcal E_n}]$, it follows that
\begin{equation}\label{eq:u1EnP}
\lim_{n\to \infty}u_1^{\mathcal E_n}(x,y)=u_1^{\mathcal P}(x,y).
\end{equation}
Denoting $u_2^{\mathcal E_n}(x,y)=\mathbb{E}^{(x,y)}[\tau_{\mathcal E_n}^2]$, we similarly have
\begin{equation}\label{eq:u2EnP}
\lim_{n\to \infty}u_2^{\mathcal E_n}(x,y)=u_2^{\mathcal P}(x,y).
\end{equation}
By Theorem \ref{thm:ellipse}, each $u_1^{\mathcal E_n}(x,y)$ takes the form
\begin{align*}
u_1^{\mathcal E_n}(x,y)&=\frac{\frac{n}{2c}n^2}{2\left(\frac{n}{2c}+n^2\right)}\left(1-\frac{2cx^2}{n}-\frac{(y-n)^2}{n^2}\right)\\
&=\frac{n^3}{4c\left(\frac{n}{2c}+n^2\right)}\left(-\frac{2cx^2}{n}-\frac{y^2}{n^2}+\frac{2y}{n}\right).
\end{align*}
Letting $n \to \infty$ and using \eqref{eq:u1EnP}, we conclude $$u_1^{\mathcal P}(x,y) =  -\frac{x^2}{2}+\frac{y}{2c} = \frac{1}{2c}\left(y-cx^2\right).$$ To compute $u_2(x,y)$, we proceed in a similar fashion. By Theorem \ref{thm:ellipse}, each $u_2^{\mathcal E_n}(x,y)$ takes the form
\[
u_2^{\mathcal E_n}(x,y)=u_1^{\mathcal E_n}(x,y)(A_nx^2+B_n(y-n)^2+C_n),
\]
where
\begin{align*}
A_n&=-\frac{cn(5+2cn)}{3+12cn(3+cn)},\\
B_n&=-\frac{1+10cn}{6+24cn(3+cn)},\\
C_n&=\frac{n^2(5+2cn)(1+10cn)}{6(1+2cn)(1+4cn(3+cn))}.
\end{align*}
Letting $n\to \infty$ and using \eqref{eq:u2EnP}, we have
\[
  u_2^{\mathcal P}(x,y) =u_1^{\mathcal P}(x,y) \frac{1}{6c}\left(5y-cx^2+\frac{5}{c}\right),
\]

as desired.
\end{proof}

With formulas in hand for the first and second exit time moments, we can now prove the main result of this section.

\begin{theorem}\label{thm:parabola}
Let $\mathcal P$ be our parabolic domain and let $u_1(x,y)=\mathbb{E}^{(x,y)}[\tau_{\mathcal P}]$ and $u_2(x,y)=\mathbb{E}^{(x,y)}[\tau_{\mathcal P}^2]$  be the first two moments of the exit time from $\mathcal P$. Given positive real numbers $c_1$ and $c_2$, if the level curves $u_1(x,y)=c_1$ and $u_2(x,y)=c_2$ intersect, then their intersection is either two points or a single point determined by $c_1$ and $c_2$.
\end{theorem}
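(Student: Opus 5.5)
Using the formulas of Proposition \ref{Prop:Pformulasu1u2}, I would mirror the elliptic argument of Theorem \ref{thm:ellipse}. The plan has three steps.

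\textbf{Step 1: reduce the system.} Suppose $(x,y)$ lies on both level curves $u_1(x,y)=c_1$ and $u_2(x,y)=c_2$. Since $c_1>0$, we may divide the second equation by the first:
\[
\frac{c_2}{c_1}=\frac{1}{6c}\left(5y-cx^2+\frac{5}{c}\right).
\]
The first equation says $y-cx^2=2cc_1$, so $y=cx^2+2cc_1$. Substituting this gives
\[
\frac{c_2}{c_1}=\frac{1}{6c}\left(4cx^2+10cc_1+\frac{5}{c}\right),
\]
which is a linear equation in $x^2$ with nonzero coefficient. Solving it,
\[
x^2=\frac{1}{4c}\left(\frac{6cc_2}{c_1}-10cc_1-\frac{5}{c}\right),
\]
and then $y=cx^2+2cc_1$.

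\textbf{Step 2: count the solutions.} Both $x^2$ and $y$ are now determined uniquely by $c_1$ and $c_2$. Because the curves are assumed to intersect, the right-hand side for $x^2$ is nonnegative. If it is positive, the intersection consists of the two points $(\pm x,y)$, which are reflections of each other across the $y$-axis (the symmetry axis of $\mathcal P$). If it is zero, the intersection is the single point $(0,2cc_1)$ on the axis.

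\textbf{Step 3: confirm the points lie in $\mathcal P$.} We need $y>cx^2$, and this holds automatically since $y-cx^2=2cc_1>0$.

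There is no real obstacle here. The nontrivial input is the moment formulas, which we take from Proposition \ref{Prop:Pformulasu1u2}. The only point requiring care is checking that the coefficient of $x^2$ after substitution, namely $\tfrac{4c}{6c}=\tfrac{2}{3}$, does not vanish. Unlike the elliptic case, this needs no hypothesis such as $a\neq b$.
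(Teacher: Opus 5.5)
Your proposal is correct and follows essentially the same route as the paper. Both substitute $y-cx^2=2cc_1$ into the $u_2$ equation and solve the resulting linear relation; the paper solves for $y$ first and you solve for $x^2$ first, but both reach $x^2=\frac{3c_2}{2c_1}-\frac{5c_1}{2}-\frac{5}{4c^2}$, and both then split on whether this is positive or zero. Your Step 3 check that the candidate points actually lie in $\mathcal P$ is a small extra that the paper leaves implicit.
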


\begin{proof}
Assuming $u_1(x,y)=c_1$ and $u_2(x,y)=c_2$, our formulas from Proposition \ref{Prop:Pformulasu1u2} give
\[
c_2=\frac{c_1}{6c}\left(5y-cx^2+\frac{5}{c}\right)=\frac{c_1}{6c}\left(2cc_1+4y+\frac{5}{c}\right).
\]
Solving for $y$ gives
\begin{equation}\label{eqn:ypar}
y=\frac{3cc_2}{2c_1}-\frac{cc_1}{2}-\frac{5}{4c}.
\end{equation}
Plugging this formula for $y$ back into $u_1(x,y)=c_1$ and solving for $x^2$ shows
\begin{equation}\label{eq:xsqpar}
x^2=\frac{3c_2}{2c_1}-\frac{5c_1}{2}-\frac{5}{4c^2}.
\end{equation}
Since we have assumed that the level curves $u_1(x,y)=c_1$ and $u_2(x,y)=c_2$ intersect, it must be that $\frac{3c_2}{2c_1}-\frac{5c_1}{2}-\frac{5}{4c^2}\geq 0$. If $\frac{3c_2}{2c_1}-\frac{5c_1}{2}-\frac{5}{4c^2}=0$, then the level curves $u_1=c_1$ and $u_2=c_2$ intersect at a single point $(x,y)$ on the $y$-axis with $y$ determined by \eqref{eqn:ypar}. If $\frac{3c_2}{2c_1}-\frac{5c_1}{2}-\frac{5}{4c^2}>0$ then the level curves intersect at two points symmetric about the $y$-axis, found by solving \eqref{eq:xsqpar} for $x$ (yielding two $x$-values) and by using \eqref{eqn:ypar} to get $y$.
\end{proof}

Analogous to Corollary \ref{cor:locdetell}, we have the following corollary on location detection.

\begin{corollary}\label{cor:locdetpar}
Consider a parabolic domain $\mathcal P$. Given values of the first two moments of the exit time at some unknown point $(x,y)$ in $\mathcal P$, one can detect the location $(x,y)$ up to symmetry. Equivalently, given the mean exit time and the variance of the exit time at some point $(x,y)$ in $\mathcal P$, one can detect the location $(x,y)$ up to symmetry.
\end{corollary}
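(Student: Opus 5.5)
The plan is to deduce the corollary directly from Theorem \ref{thm:parabola} together with Proposition \ref{Prop:Pformulasu1u2}, exactly as Corollary \ref{cor:locdetell} follows from Theorem \ref{thm:ellipse}.

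\textbf{Step 1: reduce to the level curves.} Suppose the unknown point is $(x_0,y_0)\in\mathcal P$, and we are told the values
\[
c_1=u_1(x_0,y_0),\qquad c_2=u_2(x_0,y_0).
\]
By the formulas of Proposition \ref{Prop:Pformulasu1u2}, both $c_1$ and $c_2$ are finite. Since $y_0>cx_0^2$, we have $c_1>0$. The factor $5y_0-cx_0^2+\frac{5}{c}$ is also positive, because $5y_0>5cx_0^2\geq cx_0^2$. Hence $c_2>0$. The level curves $u_1=c_1$ and $u_2=c_2$ therefore intersect, since $(x_0,y_0)$ lies on both, and the hypotheses of Theorem \ref{thm:parabola} are met.

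\textbf{Step 2: read off the location.} Theorem \ref{thm:parabola} says the intersection is either a single point on the $y$-axis or two points of the form $(\pm x,y)$. In both cases these points are given explicitly by \eqref{eqn:ypar} and \eqref{eq:xsqpar} in terms of $c_1$, $c_2$ and $c$. The domain $\mathcal P$ is symmetric under the reflection $(x,y)\mapsto(-x,y)$, and $u_1,u_2$ are invariant under it. So the candidate set is exactly the orbit of $(x_0,y_0)$ under the symmetry group of $\mathcal P$. In other words, $(x_0,y_0)$ is determined up to symmetry.

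\textbf{Step 3: the variance formulation.} The identity
\[
\textup{Var}^{(x,y)}[\tau]=\mathbb E^{(x,y)}[\tau^2]-\left(\mathbb E^{(x,y)}[\tau]\right)^2
\]
gives a bijection between the pairs $(c_1,c_2)$ and the pairs consisting of the mean exit time and the variance. Knowing one pair is therefore the same as knowing the other, and the second statement follows from the first.

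There is no serious obstacle here. The only point needing care is checking that $c_1$ and $c_2$ are positive and finite, so that Theorem \ref{thm:parabola} applies as stated, and that is exactly what Step 1 verifies.
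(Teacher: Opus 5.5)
Your proposal is correct and follows the same route as the paper, which gives no separate argument and obtains the corollary directly from Theorem \ref{thm:parabola} (using the explicit formulas of Proposition \ref{Prop:Pformulasu1u2}) together with the identity $\textup{Var}^{(x,y)}[\tau]=\mathbb E^{(x,y)}[\tau^2]-\left(\mathbb E^{(x,y)}[\tau]\right)^2$. Your extra checks are details the paper leaves implicit, and both are correct: that $c_1,c_2>0$, so the theorem's hypotheses hold, and that the candidate set is exactly the orbit of $(x_0,y_0)$ under $(x,y)\mapsto(-x,y)$.
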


\section{Hyperbolic Domains}\label{sect:hyp}

Throughout this section, we let $\mathcal H$ denote the hyperbolic domain
\[
\mathcal H=\left\{(x,y)\in \mathbb{R}^2:\frac{x^2}{a^2}-\frac{y^2}{b^2}>1 \textup{ and }x>a\right\}
\]
where $a$ and $b$ are positive real numbers. The boundary of $\mathcal H$ is the right branch of the hyperbola $\frac{x^2}{a^2}-\frac{y^2}{b^2}=1$. See Figure \ref{fig:pichyperbola} below.

\begin{figure}[H]
\begin{center}
\begin{tikzpicture}[scale=.75, font=\small]

    \def\a{3} 
    \def\b{1.8} 
    \pgfmathsetmacro{\c}{sqrt(\a*\a + \b*\b)} 

    \coordinate (Center) at (0,0);
    \coordinate (Right) at (\a, 0);
    \coordinate (Left) at (-\a, 0);
    \coordinate (Focus1) at (-\c, 0);
    \coordinate (Focus2) at (\c, 0);


    \draw[thick, black] (-5.5, 0) -- (5.5, 0); 
    \draw[thick, black] (0, -3.5) -- (0, 3.5); 

    \draw[thick, blue, samples=100] plot[domain=-1.5:1.5] ({\a*cosh(\x)}, {\b*sinh(\x)});
    \draw[thick, blue, samples=100] plot[domain=-1.5:1.5] ({-\a*cosh(\x)}, {\b*sinh(\x)});

    \node[blue] at (4.7, 1.5) {$\mathcal{H}$};


    \fill (Right) circle (2.5pt);
    \fill (Left) circle (2.5pt);
    
    \node[below right, xshift=2pt] at (Right) {$(a, 0)$};
    \node[below left, xshift=-2pt] at (Left) {$(-a, 0)$};

\end{tikzpicture}
\end{center}
\caption{A picture of the hyperbolic domain $\mathcal H$. The right branch of the hyperbola is $\partial \mathcal H$.}
\label{fig:pichyperbola}
\end{figure}

Our first result in this section gives formulas for the first and second exit time moments in a hyperbolic domain.

\begin{proposition}\label{prop:momformshyp}
Let $\mathcal H$ denote the hyperbolic domain above and let $u_1(x,y)=\mathbb{E}^{(x,y)}[\tau_{\mathcal H}]$ and $u_2(x,y)=\mathbb{E}^{(x,y)}[\tau_{\mathcal H}^2]$ denote the first and second moments of the exit time from $\mathcal H$. If $a>b$, then the first moment of the exit time from $\mathcal H$ satisfies
\[
u_1(x, y)=\frac{a^2b^2}{2(a^2-b^2)}\left(\frac{x^2}{a^2}-\frac{y^2}{b^2}-1 \right).
\]
If $a>(1+\sqrt{2})b$ then the second moment of the exit time from $\mathcal H$ satisfies
\[
u_2(x, y)=\left(Ax^2+By^2+C\right)u_1(x, y),
\]
where the constants $A, B,$ and $C$ satisfy
\begin{align*}
A&=\frac{b^2(5a^2-b^2)}{6(a^4-6a^2b^2+b^4)},\\
B&=-\frac{a^2(a^2-5b^2)}{6(a^4-6a^2b^2+b^4)},\\
C&=\frac{a^2b^2(5b^2-a^2)(5a^2-b^2)}{6(a^2-b^2)(a^4-6a^2b^2+b^4)}.
\end{align*}
\end{proposition}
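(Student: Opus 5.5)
The plan has two stages for each moment. First I check directly that the proposed formulas solve the Poisson hierarchy \eqref{eq:poisshier} on $\mathcal H$. Then, because $\mathcal H$ is unbounded and uniqueness fails, I show that these particular solutions really are the exit time moments.

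The PDE check is routine. For $u_1$ we have
\[
-\Delta u_1=-\frac{a^2b^2}{2(a^2-b^2)}\left(\frac{2}{a^2}-\frac{2}{b^2}\right)=1,
\]
and $u_1$ vanishes on the right branch of the hyperbola. For $u_2=(Ax^2+By^2+C)u_1$, I expand $\Delta u_2$ and match coefficients of $x^2$, $y^2$ and $1$ against $-2u_1$. This gives three linear equations for $A,B,C$. The denominator $a^4-6a^2b^2+b^4$ appearing in them is positive exactly when $a>(1+\sqrt2)b$. Both $u_1$ and $u_2$ are positive in $\mathcal H$ once $a>b$ (and $u_2$ also needs the stronger hypothesis, see below).

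\textbf{Identification, $u_1$.} I exhaust $\mathcal H$ by bounded Lipschitz domains $\mathcal H_n=\mathcal H\cap\{x<n\}$. Then $\tau_{\mathcal H_n}\uparrow\tau_{\mathcal H}$ exactly as in Proposition \ref{prop:limtau}, and monotone convergence gives $v_k^{(n)}:=\mathbb E[\tau_{\mathcal H_n}^k]\uparrow \mathbb E[\tau_{\mathcal H}^k]$.

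For the upper bound, $u_1-v_1^{(n)}$ is harmonic on $\mathcal H_n$. It equals $0$ on the hyperbola and $u_1\ge 0$ on the segment $x=n$. The minimum principle then gives $v_1^{(n)}\le u_1$, so $\mathbb E[\tau_{\mathcal H}]\le u_1$.

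For the lower bound I use Dynkin's formula. Applied to $u_1$ on $\mathcal H_n$, it gives
\[
u_1(x,y)=\mathbb E[\tau_{\mathcal H_n}]+\mathbb E\big[u_1(X_{\tau_{\mathcal H_n}})\big],
\]
so I must show $\mathbb E[u_1(X_{\tau_{\mathcal H_n}});\,X_{\tau_{\mathcal H_n}}\in\{x=n\}]\to0$. On that segment $u_1\lesssim n^2$. Since $\mathcal H$ lies inside a wedge of half-angle $\arctan(b/a)<\pi/4$, the probability of reaching $x=n$ before exiting decays like $n^{-\pi/(2\arctan(b/a))}$, which is faster than $n^{-2}$. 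I would make this decay rigorous analytically by comparing with an explicit positive harmonic function on a slightly larger wedge, namely $\operatorname{Re}\big((x+iy)^{\beta}\big)$ with $\beta>2$. Then the product with $n^2$ tends to $0$ and $u_1=\mathbb E[\tau_{\mathcal H}]$.

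\textbf{Identification, $u_2$.} I run the same argument with $-\Delta v_2^{(n)}=2v_1^{(n)}\le 2u_1$. The function $u_2-v_2^{(n)}$ is then superharmonic and nonnegative on $\partial\mathcal H_n$. This requires $u_2\ge0$ on $\mathcal H$, i.e. $Ax^2+By^2+C\ge0$. I expect this to follow from $a>(1+\sqrt2)b$: under it $A,B>0$, and on the vertex $(a,0)$ the quantity $Aa^2+C$ should be checked to be positive, with the conic $Ax^2+By^2+C=0$ lying outside $\mathcal H$. Granting that, $\mathbb E[\tau_{\mathcal H}^2]\le u_2$ everywhere.

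For the reverse bound I apply Dynkin to $u_2$. The boundary term on $\{x=n\}$ is now of order $n^4$ times the hitting probability. So I need the wedge exponent $\pi/(2\arctan(b/a))>4$, which is exactly $\arctan(b/a)<\pi/8$, i.e. $a>(1+\sqrt2)b$. Since $\tan(\pi/8)=\sqrt2-1$, this explains the hypothesis. I also need $\int$ of $2(u_1-v_1^{(n)})$ against the Green function to vanish, which follows by monotone convergence.

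The main obstacle is this last step: producing a clean barrier showing that the harmonic measure of $\{x=n\}$ in $\mathcal H_n$ is $o(n^{-4})$. I would try the barrier $\operatorname{Re}(z^\beta)/n^\beta$ with $4<\beta<\pi/(2\arctan(b/a))$, applied on the wedge containing $\mathcal H$, and use the minimum principle.
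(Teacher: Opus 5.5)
Your proposal is correct, and it closes the key step by a genuinely different route from the paper. Both arguments get the upper bound from the truncations $\mathcal H_n$ and the minimum principle. The paper then proves equality analytically. The difference between the candidate formula and the true moment is a nonnegative harmonic function vanishing on $\partial\mathcal H$. It is pulled back to the right half-plane by an explicit conformal map $\varphi$, where $|\varphi(z)|$ grows like $|z|^{\frac{2}{\pi}\arctan(b/a)}$. Odd reflection then extends it to an entire harmonic function of growth $O(|z|^{\frac{4k}{\pi}\arctan(b/a)})$ for $k=1,2$, and Liouville kills it when this exponent is below $1$.

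You instead use Dynkin's formula and a barrier bound on the harmonic measure of the cut $\{x=n\}$, which needs only the containment $\mathcal H\subset\{|y|<\frac{b}{a}x\}$. The barrier works as you suggest. Take $\theta_0=\arctan(b/a)$ and $\beta\theta_0<\frac{\pi}{2}$. Then $\operatorname{Re}(z^\beta)/(n^\beta\cos(\beta\theta_0))$ is harmonic near $\overline{\mathcal H_n}$, nonnegative on the hyperbolic arc, and at least $1$ on the cut, so the hitting probability is $O(n^{-\beta})$. Requiring $\beta>4$ gives exactly $\theta_0<\frac{\pi}{8}$, i.e. $a>(1+\sqrt2)b$, the same threshold the paper obtains from sublinear growth. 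Your route avoids the explicit Riemann map, whose analyticity on all of $\mathbb{H}^+$ with principal branches takes some care. It also avoids reflection, Liouville, and the claim that the finite moment solves the PDE on the unbounded domain. The paper's route stays within elementary PDE tools, as its authors intend.

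Two details need fixing. First, under $a>(1+\sqrt2)b$ one has $a^2>5b^2$, so $B<0$ and $C<0$; your heuristic ``$A,B>0$'' is false. Positivity of $Ax^2+By^2+C$ on $\mathcal H$ is still true, but it is proved as in the paper. Since $A>0$ and $x^2>a^2+\frac{a^2}{b^2}y^2$, one gets the lower bound $\frac{2a^2(a^2+b^2)}{3(a^4-6a^2b^2+b^4)}y^2+\frac{2a^2b^4(5a^2-b^2)}{3(a^2-b^2)(a^4-6a^2b^2+b^4)}>0$.

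Second, your Green-function term is not literally monotone: the Green functions of $\mathcal H_n$ increase while $u_1-v_1^{(n)}$ decreases. Dominated convergence would need $\mathbb{E}[\tau_{\mathcal H}^2]<\infty$.

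Both issues disappear if you apply Dynkin directly to $u_2$:
$u_2(p)=\mathbb{E}^p[u_2(X_{\tau_{\mathcal H_n}})]+\mathbb{E}^p\bigl[\int_0^{\tau_{\mathcal H_n}}2u_1(X_s)\,ds\bigr]$.
The first term is $O(n^{4-\beta})$. The second term increases to $\mathbb{E}^p\bigl[\int_0^{\tau_{\mathcal H}}2u_1(X_s)\,ds\bigr]=\mathbb{E}^p[\tau_{\mathcal H}^2]$ by the Markov property, once $u_1=\mathbb{E}[\tau_{\mathcal H}]$ is known. So neither the upper bound nor the sign of $u_2$ is needed.
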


\begin{proof}
Throughout the proof we denote
\begin{align*}
u_1^{\mathcal H}(x,y)&=\mathbb{E}^{(x,y)}[\tau_{\mathcal H}],\\
u_2^{\mathcal H}(x,y)&=\mathbb{E}^{(x,y)}[\tau_{\mathcal H}^2],
\end{align*}
and
\begin{align*}
v_1(x, y)&=\frac{a^2b^2}{2(a^2-b^2)}\left(\frac{x^2}{a^2}-\frac{y^2}{b^2}-1 \right),\\
v_2(x,y)&=\left(Ax^2+By^2+C\right)v_1(x, y),
\end{align*}
where the constants $A, B,$ and $C$ are as in the statement of the proposition. Then since $a>b$ it follows that $v_1(x,y)> 0$ in $\mathcal H$. We next show that $v_2(x,y)> 0$ in $\mathcal H$ under the assumption that $a>(1+\sqrt{2})b$. We start by considering a denominator term from the constants $A,B,C$, namely $a^4-6a^2b^2+b^4$. If we apply the quadratic formula with $a^2$ as our variable to the equation $a^4-6a^2b^2+b^4=0$, we see that
\[
a^2=\frac{6b^2 \pm \sqrt{36b^4-4b^4}}{2}=(3\pm 2\sqrt{2})b^2.
\]
It follows that
\begin{equation}\label{eq:factab}
a^4-6a^2b^2+b^4=\left(a^2-(3+ 2\sqrt{2})b^2\right)\left(a^2-(3- 2\sqrt{2})b^2\right).
\end{equation}
Since $3+2\sqrt{2}=(1+\sqrt{2})^2$, our assumption that $a>(1+\sqrt{2})b$ implies that $a^2>(3+2\sqrt{2})b^2$. And since $3+ 2\sqrt{2}>3- 2\sqrt{2}$ we also have $a^2>(3-2\sqrt{2})b^2$.
Thus from \eqref{eq:factab} we have that $a^4-6a^2b^2+b^4>0$. We now work with the formula for $v_2(x,y)$ as a whole. If $(x,y)\in \mathcal H$, then $x^2>\frac{a^2}{b^2}y^2+a^2$. Using this bound in the expression for $v_2$ we see
\begin{align*}
v_2(x,y)&\geq v_1(x,y)\left(A\left(\frac{a^2}{b^2}y^2+a^2\right)+By^2+C\right)\\
&=v_1(x,y)\left(\frac{2 a^2(a^4-b^4)}{3(a^2-b^2)(a^4-6a^2b^2+b^4)}y^2+\frac{2a^2b^4(5a^2-b^2)}{3(a^2-b^2)(a^4-6a^2b^2+b^4)}\right)
\end{align*}
which is positive since we have already argued that $a^4-6a^2b^2+b^4>0$ under the assumption that $a>(1+\sqrt{2})b$ and the remaining terms are all positive since $a>b$.

We next show that $u_1^{\mathcal H}(x,y)=v_1(x,y)$ in $\mathcal H$. Straightforward calculations show that $-\Delta v_1=1$ in $\mathcal H$ with $v_1=0$ on $\partial \mathcal H$. Denote
\[
\mathcal H_n=\left \{(x,y)\in \mathcal H: x<n\right \}
\]
and also
\[
u_1^{\mathcal H_n}(x,y)=\mathbb{E}^{(x,y)}[\tau_{\mathcal H_n}].
\]
In what follows we assume $n>a$. Since $\mathcal H_n$ is bounded, $u_1^{\mathcal H_n}$ satisfies
\[
\left\{
\begin{aligned}
-\Delta u_1^{\mathcal H_n} &= 1\ \textup{in }\mathcal H_n,\\
u_1^{\mathcal H_n} &= 0\ \textup{on }\partial \mathcal H_n,
\end{aligned}
\right.
\]
and since we've shown that $v_1\geq 0$ on $\mathcal H$, $v_1$ satisfies
\[
\left\{
\begin{aligned}
-\Delta v_1 &= 1\ \textup{in }\mathcal H_n,\\
v_1 &\geq  0\ \textup{on }\partial \mathcal H_n.
\end{aligned}
\right.
\]
It follows that $v_1-u_1^{\mathcal H_n}$ is harmonic in $\mathcal H_n$ and nonnegative on $\partial \mathcal H_n$. By the minimum principle, we thus have 
\begin{equation}\label{ineq:trunhypu1}
v_1\geq u_1^{\mathcal H_n} \textup{ in } \mathcal H_n.
\end{equation}
Arguing as in the proof of Proposition \ref{prop:limtau}, we see
\[
\lim_{n\to \infty}\tau_{\mathcal H_n}=\tau_{\mathcal H}
\]
and applying the monotone convergence theorem, we see
\begin{equation}\label{ineq:hypu1v1}
v_1\geq u_1^{\mathcal H} \textup{ in }\mathcal H.
\end{equation}
Inequality \eqref{ineq:hypu1v1} implies that $u_1^{\mathcal H}$ is everywhere finite and so is one solution to the PDE problem
\[
\left\{
\begin{aligned}
-\Delta u_1^{\mathcal H} &= 1\ \textup{in }\mathcal H,\\
u_1^{\mathcal H} &= 0\ \textup{on }\partial \mathcal H.
\end{aligned}
\right.
\]
We conclude that $v_1-u_1^{\mathcal H}$ is harmonic in $\mathcal H$ and vanishes on $\partial \mathcal H$. The map 
\[
\varphi(z)=\frac{\sqrt{a^2+b^2}}{2} \left( \left( z + \sqrt{z^2 - 1} \right)^{\frac{2}{\pi}\arctan\left(\frac{b}{a}\right)} + \left( z + \sqrt{z^2 - 1} \right)^{-\frac{2}{\pi}\arctan\left(\frac{b}{a}\right)} \right)
\]
furnishes a conformal mapping between the right half-plane
\[
\mathbb{H}^+=\{(x,y)\in \mathbb{R}^2:x>0\}
\]
and the hyperbolic domain $\mathcal H$ and maps $\partial \mathbb{H}^+$ into $\partial \mathcal H$. Here the complex power functions are defined using their principal branches, with branch cuts taken along the negative real axis. Since harmonicity is preserved under conformal mapping, it follows that
\[
h_1(z)=v_1(\varphi(z))-u_1^{\mathcal H}(\varphi(z)),\ z\in \mathbb{H}^+
\]
is harmonic in $\mathbb{H}^+$ and vanishes on $\partial \mathbb{H}^+$. Moreover, by \eqref{ineq:hypu1v1} $h_1$ is nonnegative.  Since $u_1^{\mathcal H}\geq 0$, the formula for $v_1$ gives
\begin{equation}\label{eq:hbigobound1}
|h_1(z)|\leq |v_1(\varphi(z))|=\mathcal O\left(|z|^{\frac{4}{\pi}\arctan\left(\frac{b}{a}\right)}\right),\  z\in \mathbb{H}^+,\  |z|\to \infty.
\end{equation}
By Schwarz reflection, the odd extension of $h_1$ across the $y$-axis is harmonic in $\mathbb{R}^2$ and  satisfies the same bound in $\eqref{eq:hbigobound1}$ on $\mathbb{R}^2$. Our assumption that $a>b$ implies that $\frac{4}{\pi}\arctan\left(\frac{b}{a}\right)<1$ and so standard results about harmonic functions give that $h_1$ is constant (see exercise 7 on p.42 of \cite{A2}, for instance). But since $h_1=0$ on the $y-$axis, we see $h_1=0$ everywhere giving $u_1^{\mathcal H}(x,y)=v_1(x,y)$ in $\mathcal H$.

To show that $u_2^{\mathcal H}(x,y)=v_2(x,y)$ in $\mathcal H$ we proceed in a similar fashion. Defining
\[
u_2^{\mathcal H_n}(x,y)=\mathbb{E}^{(x,y)}[\tau_{\mathcal H_n}^2],
\]
we see that $u_2^{\mathcal H_n}$ and $v_2$ satisfy
\begin{equation*}
\left\{
\begin{aligned}
-\Delta u_2^{\mathcal H_n} &= 2u_1^{\mathcal H_n}\ \textup{in }\mathcal H_n,\\
u_2^{\mathcal H_n} &= 0\ \textup{on }\partial \mathcal H_n,
\end{aligned}
\right.
\qquad\qquad\qquad 
\left\{
\begin{aligned}
-\Delta v_2 &= 2v_1\ \textup{in }\mathcal H_n,\\
v_2 &\geq  0\ \textup{on }\partial \mathcal H_n.
\end{aligned}
\right.
\end{equation*}
Using \eqref{ineq:trunhypu1}, we see that
\[
-\Delta(v_2-u_2^{\mathcal H_n})=2(v_1-u_1^{\mathcal H_n})\geq 0
\]
and $v_2-u_2^{\mathcal H_n}\geq 0$ on $\partial \mathcal H_n$. By the minimum principle for superharmonic functions, we have that
\begin{equation}\label{ineq:trunhypu2}
v_2\geq u_2^{\mathcal H_n} \textup{ in } \mathcal H_n.
\end{equation}
Letting $n\to \infty$ and using the monotone convergence theorem, we see
\begin{equation}\label{ineq:hypu2v2}
v_2\geq u_2^{\mathcal H} \textup{ in }\mathcal H.
\end{equation}
Inequality \eqref{ineq:hypu2v2} above implies that $u_2^{\mathcal H}$ is everywhere finite, and so $u_2^{\mathcal H}$ is one solution to the PDE problem
\[
\left\{
\begin{aligned}
-\Delta u_2^{\mathcal H} &= 2u_1^{\mathcal H}\ \textup{in }\mathcal H,\\
u_2^{\mathcal H} &= 0\ \textup{on }\partial \mathcal H.
\end{aligned}
\right.
\]
Since we've already shown that $u_1^{\mathcal H}=v_1$ in $\mathcal H$, we conclude that $v_2-u_2^{\mathcal H}$ is harmonic in $\mathcal H$ and vanishes on $\partial \mathcal H$. Since harmonicity is preserved under conformal mapping,
\[
h_2(z)=v_2(\varphi(z))-u_2^{\mathcal H}(\varphi(z)),\ z\in \mathbb{H}^+
\]
is harmonic in $\mathbb{H}^+$ and vanishes on $\partial \mathbb{H}^+$. By inequality \ref{ineq:hypu2v2}, $h_2$ is nonnegative. Moreover, since $u_2^{\mathcal H}\geq 0$, the formula for $v_2$ gives
\begin{equation}\label{eq:hbigobound2}
|h_2(z)|\leq |v_2(\varphi(z))|=\mathcal O\left(|z|^{\frac{8}{\pi}\arctan\left(\frac{b}{a}\right)}\right),\  z\in \mathbb{H}^+,\  |z|\to \infty.
\end{equation}
By Schwarz reflection, the odd extension of $h_2$ across the $y$-axis is harmonic in $\mathbb{R}^2$ and  satisfies the same bound in $\eqref{eq:hbigobound2}$ on $\mathbb{R}^2$. Our assumption that $a>(1+\sqrt{2})b$ implies $\frac{8}{\pi}\arctan\left(\frac{b}{a}\right)<1$ and as before $h_2$ is constant. But since $h_2=0$ on the $y$-axis, we see $h_2=0$ everywhere giving $u_2^{\mathcal H}(x,y)=v_2(x,y)$ in $\mathcal H$.

\end{proof}

\begin{theorem}\label{thm:mainreshyp}
Let $\mathcal H$ be our hyperbolic domain and assume that $a>(1+\sqrt{2})b$. Let $u_1(x,y)=\mathbb{E}^{(x,y)}[\tau_{\mathcal H}]$ and $u_2(x,y)=\mathbb{E}^{(x,y)}[\tau_{\mathcal H}^2]$  be the first two moments of the exit time from $\mathcal H$. Given positive real numbers $c_1$ and $c_2$, if the level curves $u_1(x,y)=c_1$ and $u_2(x,y)=c_2$ intersect, then their intersection is either two points or a single point determined by $c_1$ and $c_2$.
\end{theorem}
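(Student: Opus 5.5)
We follow the template of the proofs of Theorems \ref{thm:ellipse} and \ref{thm:parabola}, now using the explicit formulas of Proposition \ref{prop:momformshyp}. These formulas are valid because $a>(1+\sqrt{2})b$, which in particular gives $a>b$.

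\textbf{Step 1: reduce $u_2=c_2$ to a linear relation in $x^2$ and $y^2$.} Suppose $(x,y)\in\mathcal H$ satisfies $u_1(x,y)=c_1$ and $u_2(x,y)=c_2$. Since $c_1>0$, we may divide to get
\[
\frac{c_2}{c_1}=Ax^2+By^2+C .
\]
The level curve $u_1=c_1$ is itself linear in $x^2$ and $y^2$:
\[
\frac{x^2}{a^2}-\frac{y^2}{b^2}=1+\frac{2c_1(a^2-b^2)}{a^2b^2}.
\]
We solve this for $y^2$ and substitute into $c_2/c_1=Ax^2+By^2+C$. This gives
\[
\frac{c_2}{c_1}=Dx^2+E,
\]
where $D=A+\frac{b^2}{a^2}B$ and $E$ depends only on $a$, $b$ and $c_1$. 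Alternatively, we solve for $x^2$ and obtain $c_2/c_1=Fy^2+G$ with $F=B+\frac{a^2}{b^2}A$. Both substitutions are routine algebra. The only quantity we actually need is $F$.

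\textbf{Step 2: show $F\neq 0$.} Compute
\[
F=\frac{a^2\bigl(5a^2-b^2-(a^2-5b^2)\bigr)}{6(a^4-6a^2b^2+b^4)}=\frac{2a^2(a^2+b^2)}{3(a^4-6a^2b^2+b^4)}.
\]
This is strictly positive, because $a^4-6a^2b^2+b^4>0$ under $a>(1+\sqrt2)b$, as established in the proof of Proposition \ref{prop:momformshyp}. Hence
\[
y^2=\frac1F\left(\frac{c_2}{c_1}-G\right)
\]
is uniquely determined by $c_1$ and $c_2$.

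\textbf{Step 3: recover $x$ from the sign constraint $x>a$.} Substitute the value of $y^2$ back into $u_1=c_1$. This determines $x^2$ uniquely, and since $\mathcal H$ requires $x>a>0$, it determines $x$ as the positive square root. This is where the hyperbolic case differs from the elliptic one: the constraint $x>0$ removes the sign ambiguity in $x$. So we do not need $D\neq0$, although one could check it.

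\textbf{Step 4: count the intersection points.} Because the level curves are assumed to intersect, the quantity $\frac1F\left(\frac{c_2}{c_1}-G\right)$ is nonnegative. If it is positive, the intersection consists of the two points $(x,\pm y)$, symmetric about the $x$-axis. If it is zero, the intersection is the single point $(x,0)$ on the $x$-axis.

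The main obstacle is the nondegeneracy in Step 2. Everything else is bookkeeping, and $F\neq0$ is exactly where the hypothesis $a>(1+\sqrt2)b$ re-enters, through the positivity of $a^4-6a^2b^2+b^4$.
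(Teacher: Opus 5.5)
Your proposal is correct and follows essentially the same route as the paper: reduce $u_2=c_2$ on the level curve $u_1=c_1$ to $\frac{c_2}{c_1}=Fy^2+G$ with $F\neq 0$, then fix the sign of $x$ using $x>a$ and count points by whether $y^2$ is zero or positive. Your two small differences are minor streamlinings. You verify $F=\frac{2a^2(a^2+b^2)}{3(a^4-6a^2b^2+b^4)}>0$ explicitly, where the paper simply asserts $D,F\neq 0$. You also recover $x^2$ by back-substituting into $u_1=c_1$ rather than through the $D$-equation.
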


\begin{proof}
Assume that the level curves $u_1(x,y) = c_1$ and $u_2(x,y) = c_2$ intersect for some positive constants $c_1$ and $c_2$. Using Proposition \ref{prop:momformshyp}, we solve the equation $u_1(x,y) = c_1$ for $x^2$ and $y^2$ to obtain:
\begin{align*}
x^2&=a^2\left(\frac{y^2}{b^2}+1+c_1\frac{2(a^2-b^2)}{a^2 b^2}\right),\\
y^2&=b^2 \left(\frac{x^2}{a^2}-1-c_1\frac{2(a^2-b^2)}{a^2 b^2}\right).
\end{align*}
Each of the equations above can be substituted back into our equation $u_2(x,y) = c_2$ giving
\begin{equation}\label{eq:hyperbolac2c1}
    \frac{c_2}{c_1} = Dx^2 + E = Fy^2 + G,
\end{equation}
for some constants $D,E,F,G$ each depending on $a$ and $b$:
\begin{align*}
    D &= \frac{2b^2(a^4-b^4)}{3(a^2-b^2)(a^4-6a^2b^2+b^4)}, \\
   E &= \frac{(a^2-5 b^2)(c_1(a^2-b^2)^2-2a^4b^2)}{3(a^2-b^2)(a^4-6a^2b^2+b^4)}, \\
   F &= \frac{2a^2(a^4-b^4)}{3(a^2-b^2)(a^4-6a^2b^2+b^4)}  , \\
   G &= \frac{(5a^2-b^2)(2a^2b^4+c_1(a^2-b^2)^2)}{3(a^2-b^2)(a^4-6a^2b^2+b^4)}.
\end{align*}
Since $a>(1+\sqrt{2})b$, the constants $D$ and $F$ are nonzero. In particular, we can solve \eqref{eq:hyperbolac2c1} explicitly for $x^2$ and $y^2$ giving
\begin{align*}
x^2&=\frac{1}{D}\left(\frac{c_2}{c_1}-E\right),\\
y^2&=\frac{1}{F}\left(\frac{c_2}{c_1}-G\right).
\end{align*}
Since we assumed that the level curves $u_1(x,y) = c_1$ and $u_2(x,y) = c_2$ intersect, the expression $\frac{1}{D}\left(\frac{c_2}{c_1}-E\right)$ must be greater than $a^2$ and $\frac{1}{F}\left(\frac{c_2}{c_1}-G\right)$ must be nonnegative. If $y^2=\frac{1}{F}\left(\frac{c_2}{c_1}-G\right)=0$, then solving $x^2=\frac{1}{D}\left(\frac{c_2}{c_1}-E\right)$ for $x$ and choosing the positive solution, we have reduced $(x,y)$ to one possible location along the $x$-axis. Thus the level curves intersect at a single point. If $y^2=\frac{1}{F}\left(\frac{c_2}{c_1}-G\right)>0$, then solving for $y$, and again solving $x^2=\frac{1}{D}\left(\frac{c_2}{c_1}-E\right)$ for $x$ and choosing the positive solution, we have reduced $(x,y)$ to two possible locations, reflection symmetric across the $x$-axis. Here the level curves intersect at two points.
\end{proof}

As with our previous settings, we have the following corollary to Theorem \ref{thm:mainreshyp} on location detection.

\begin{corollary}\label{cor:locdethyp}
Consider a hyperbolic domain $\mathcal H$ with $a>(1+\sqrt{2})b$. Given values of the first two moments of the exit time at some point unknown point $(x,y)$ in $\mathcal H$, one can detect the location $(x,y)$ up to symmetry. Equivalently, given the mean exit time and variance of the exit time at some unknown point $(x,y)$ in $\mathcal H$, one can detect the location $(x,y)$ up to symmetry.
\end{corollary}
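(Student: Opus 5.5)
The corollary follows directly from Theorem \ref{thm:mainreshyp}, in the same way Corollaries \ref{cor:locdetell} and \ref{cor:locdetpar} followed from their theorems.

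\textbf{Setup.} Suppose an unknown point $(x_0,y_0)\in\mathcal H$ is given, together with the values
\[
c_1=\mathbb{E}^{(x_0,y_0)}[\tau_{\mathcal H}], \qquad c_2=\mathbb{E}^{(x_0,y_0)}[\tau_{\mathcal H}^2].
\]
By Proposition \ref{prop:momformshyp}, under the hypothesis $a>(1+\sqrt{2})b$ both moments are given by the explicit finite formulas $v_1$ and $v_2$. Each is strictly positive on $\mathcal H$, as shown in the proof of that proposition. Hence $c_1,c_2$ are positive real numbers. Moreover, the level curves $u_1=c_1$ and $u_2=c_2$ intersect, since both pass through $(x_0,y_0)$.

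\textbf{Applying the theorem.} Theorem \ref{thm:mainreshyp} now says the intersection consists of one or two points, and these points are computable from $c_1,c_2$. Explicitly,
\[
x^2=\frac{1}{D}\left(\frac{c_2}{c_1}-E\right), \qquad y^2=\frac{1}{F}\left(\frac{c_2}{c_1}-G\right),
\]
where we take the positive root for $x$.

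\textbf{Reading off the symmetry.} The two possible points $(x,\pm y)$ are reflections of each other across the $x$-axis, which is the symmetry axis of $\mathcal H$. So $(x_0,y_0)$ is determined up to this symmetry. In the case $y=0$ it is determined exactly.

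\textbf{The variance formulation.} Since
\[
\textup{Var}^{(x,y)}[\tau]=\mathbb{E}^{(x,y)}[\tau^2]-\left(\mathbb{E}^{(x,y)}[\tau]\right)^2,
\]
knowing the pair (mean, variance) is equivalent to knowing the pair $(c_1,c_2)$: one recovers $c_2=\textup{Var}+c_1^2$. The same argument therefore applies.

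The only step needing care is the first one: confirming that $c_1$ and $c_2$ are finite and positive, so that the theorem's hypotheses hold. This is supplied by the positivity of $v_1,v_2$ and by the identification $u_k=v_k$ in Proposition \ref{prop:momformshyp}.
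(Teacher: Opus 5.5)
Your proposal is correct and follows the paper's route: the corollary is an immediate consequence of Theorem \ref{thm:mainreshyp} together with the identity $\textup{Var}^{(x,y)}[\tau]=\mathbb{E}^{(x,y)}[\tau^2]-\left(\mathbb{E}^{(x,y)}[\tau]\right)^2$, exactly as in the elliptic and parabolic cases. Your extra checks are details the paper leaves implicit: that $c_1,c_2$ are finite and positive (by Proposition \ref{prop:momformshyp}), and that the level curves meet because both pass through the true location.
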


\section{Wedge Domains}\label{sect:wedge}

Throughout this section we let $\mathcal W$ denote the unbounded wedge domain
\[
\mathcal W=\left \{z=re^{i\theta}\in \mathbb{R}^2:0<r<\infty,\ -\frac{\alpha}{2}<\theta<\frac{\alpha}{2}\right \},
\]
where $0<\alpha<\pi$ is the opening angle. See Figure \ref{fig:wedge} below.

\begin{figure}[H]
\begin{center}
\begin{tikzpicture}

    \def\radius{5}
    \def\halfalpha{30} 

    \draw[-, thick] (-1,0) -- (\radius+1.5,0) node[right, text=black]{};
    \draw[-, thick] (0,-2.5) -- (0,2.5) node[above, text=black]{};

    \draw[thick, blue] (0,0) -- (\halfalpha:\radius) node[right] {};
    \draw[thick, blue] (0,0) -- (-\halfalpha:\radius) node[right] {};

    \draw (1.5,0) arc (0:\halfalpha:1.5);
    \node at (\halfalpha/2:2) {$\alpha/2$};

    \node[right, blue] at (\halfalpha/2:3.5) {$\mathcal{W}$};

\end{tikzpicture}
\end{center}
\caption{A picture of the wedge domain $\mathcal W$ and its boundary.}
\label{fig:wedge}
\end{figure}

Our first result gives explicit formulas in polar variables for the first two exit time moments in a wedge.

\begin{proposition}\label{prop:pepformsswedge}
Let $\mathcal W$ denote the wedge domain above and let $u_1(r,\theta)=\mathbb{E}^{(r,\theta)}[\tau_{\mathcal W}]$ and $u_2(r,\theta)=\mathbb{E}^{(r,\theta)}[\tau_{\mathcal W}^2]$ denote the first and second moments of the exit time from $\mathcal W$. If $\alpha<\frac{\pi}{2}$, then the first moment of the exit time from $\mathcal W$ satisfies
\[
u_1(r, \theta)=\frac{r^2}{4}\left( \frac{\cos (2\theta) }{\cos (\alpha) } - 1 \right).
\]
If $\alpha<\frac{\pi}{4}$, then the second moment of the exit time from $\mathcal W$ satisfies
\[
u_2(r,\theta)=u_1(r,\theta) \cdot \left( \frac{r^2}{24 \cos(2\alpha)} \left( 2\cos(\alpha)\cos(2\theta) + 1 - 3\cos(2\alpha) \right) \right).
\]
\end{proposition}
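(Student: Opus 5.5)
We mirror the proof of Proposition \ref{prop:momformshyp}. Denote by $v_1,v_2$ the candidate formulas in the statement. In polar coordinates $\Delta = \partial_r^2+\frac1r\partial_r+\frac{1}{r^2}\partial_\theta^2$, so a direct computation gives $-\Delta v_1=1$ and $-\Delta v_2=2v_1$ in $\mathcal W$. The boundary condition $v_1=v_2=0$ on $\theta=\pm\alpha/2$ holds because $\cos(2\theta)=\cos\alpha$ there.

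Next we check positivity.
\begin{itemize}
\item For $v_1$: on $|\theta|<\alpha/2$ we have $\cos(2\theta)>\cos\alpha>0$ when $\alpha<\pi/2$, so $v_1>0$.
\item For $v_2$: when $\alpha<\pi/4$ we have $\cos(2\alpha)>0$. The bracket satisfies
\[
2\cos\alpha\cos(2\theta)+1-3\cos(2\alpha)\ge 2\cos^2\alpha+1-3\cos(2\alpha)=2-2\cos(2\alpha)>0,
\]
so $v_2>0$.
\end{itemize}

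We then truncate. Let $\mathcal W_n=\mathcal W\cap\{r<n\}$, which is a bounded Lipschitz domain, so $u_k^{\mathcal W_n}$ solves the Poisson hierarchy \eqref{eq:poisshier} with zero boundary data. Since $v_1\ge 0$ on $\partial\mathcal W_n$, the minimum principle gives $v_1\ge u_1^{\mathcal W_n}$. Then $-\Delta(v_2-u_2^{\mathcal W_n})=2(v_1-u_1^{\mathcal W_n})\ge0$, and the superharmonic minimum principle gives $v_2\ge u_2^{\mathcal W_n}$. Arguing as in Proposition \ref{prop:limtau}, $\tau_{\mathcal W_n}\uparrow\tau_{\mathcal W}$, and monotone convergence yields $u_k^{\mathcal W}\le v_k$. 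Thus $u_1^{\mathcal W},u_2^{\mathcal W}$ are finite and solve the hierarchy on $\mathcal W$. It follows that $h_1=v_1-u_1^{\mathcal W}$ is nonnegative, harmonic, vanishes on $\partial\mathcal W$, and is $O(r^2)$. Once $u_1^{\mathcal W}=v_1$ is established, the same holds for $h_2=v_2-u_2^{\mathcal W}$ with bound $O(r^4)$.

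The main obstacle is the Liouville step. We use the conformal map $\varphi(z)=z^{\alpha/\pi}$ from $\mathbb H^+$ onto $\mathcal W$. Then $h_k\circ\varphi$ is harmonic on $\mathbb H^+$, vanishes on the imaginary axis, and is $O(|z|^{2\alpha/\pi})$ for $k=1$ and $O(|z|^{4\alpha/\pi})$ for $k=2$. Extending oddly by Schwarz reflection gives an entire harmonic function of growth $o(|z|)$ exactly when $\alpha<\pi/2$, respectively $\alpha<\pi/4$. This explains the hypotheses. The cited result (\cite{A2}) then forces the extension to be constant, and vanishing on the axis makes it zero. Hence $u_1=v_1$ and then $u_2=v_2$.

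One care point: the bound must hold uniformly near the vertex and on the boundary so that the reflected function is continuous. This follows because $0\le h_k\le v_k$ and $v_k$ is continuous up to $\partial\mathcal W$, including the vertex, where it vanishes.
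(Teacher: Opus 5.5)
Your proposal is correct and follows essentially the same route as the paper. You verify that the candidate formulas solve the hierarchy, truncate to $\mathcal W_n$, and use the (super)harmonic minimum principle with monotone convergence to get $u_k^{\mathcal W}\le v_k$. You then pull back by $z^{\alpha/\pi}$, reflect oddly, and apply the sublinear-growth Liouville theorem, establishing $u_1^{\mathcal W}=v_1$ before handling $h_2$. Your explicit positivity check for $v_2$ and your observation that $0\le h_k\le v_k$ gives continuity up to the boundary, including the vertex (which Schwarz reflection needs), are details the paper leaves implicit.
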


\begin{proof}
Throughout the proof we denote
\begin{align*}
u_1^{\mathcal W}(r,\theta)&=\mathbb{E}^{(r,\theta)}[\tau_{\mathcal W}],\\
u_2^{\mathcal W}(r,\theta)&=\mathbb{E}^{(r,\theta)}[\tau_{\mathcal W}^2],
\end{align*}
and
\begin{align*}
v_1(r, \theta)&=\frac{r^2}{4}\left( \frac{\cos (2\theta) }{\cos (\alpha) } - 1 \right),\\
v_2(r,\theta)&=v_1(r,\theta) \cdot \left( \frac{r^2}{24 \cos(2\alpha)} \left( 2\cos(\alpha)\cos(2\theta) + 1 - 3\cos(2\alpha) \right) \right). 
\end{align*}
We first show that when $0<\alpha<\frac{\pi}{2}$, one has $u_1^{\mathcal W}(r,\theta)=v_1(r,\theta)$ in $\mathcal W$. Writing $\Delta$ in polar coordinates, straightforward calculations show that $-\Delta v_1=1$ in $\mathcal W$ with $v_1=0$ on $\partial \mathcal W$ and $-\Delta v_2=2v_1$ in $\mathcal W$ with $v_2=0$ on $\partial \mathcal W$. It is also easy to see that $v_1$ and $v_2$ are both positive (under their respective restrictions on $\alpha$) in $\mathcal W$ and vanish on $\partial \mathcal W$. Denote
\[
\mathcal W_n=\left \{z=re^{i\theta}\in \mathbb{R}^2:0<r<n,\ -\frac{\alpha}{2}<\theta<\frac{\alpha}{2}\right \}
\]
and also
\[
u_1^{\mathcal W_n}(r,\theta)=\mathbb{E}^{(r,\theta)}[\tau_{\mathcal W_n}].
\]
Then $u_1^{\mathcal W_n}$ and $v_1$ satisfy
\begin{equation*}
\left\{
\begin{aligned}
-\Delta u_1^{\mathcal W_n} &= 1\ \textup{in }\mathcal W_n,\\
u_1^{\mathcal W_n} &= 0\ \textup{on }\partial \mathcal W_n,
\end{aligned}
\right.
\qquad\qquad\qquad 
\left\{
\begin{aligned}
-\Delta v_1 &= 1\ \textup{in }\mathcal W_n,\\
v_1 &\geq  0\ \textup{on }\partial \mathcal W_n.
\end{aligned}
\right.
\end{equation*}
It follows that $v_1-u_1^{\mathcal W_n}$ is harmonic in $\mathcal W_n$ and nonnegative on $\partial \mathcal W_n$. By the minimum principle, we thus have 
\begin{equation}\label{ineq:trunwedu1}
v_1\geq u_1^{\mathcal W_n} \textup{ in } \mathcal W_n.
\end{equation}
Letting $n\to \infty$ and using the monotone convergence theorem, we see
\[
v_1\geq u_1^{\mathcal W} \textup{ in }\mathcal W.
\]
The displayed inequality above implies that $u_1^{\mathcal W}$ is everywhere finite and so is one solution to the PDE problem
\[
\left\{
\begin{aligned}
-\Delta u_1^{\mathcal W} &= 1\ \textup{in }\mathcal W,\\
u_1^{\mathcal W} &= 0\ \textup{on }\partial \mathcal W.
\end{aligned}
\right.
\]
We conclude that $v_1-u_1^{\mathcal W}$ is harmonic in $\mathcal W$ and vanishes on $\partial \mathcal W$. The map $z\to z^{\frac{\alpha}{\pi}}$ furnishes a conformal mapping between the right half-plane
\[
\mathbb{H}^+=\{(x,y)\in \mathbb{R}^2:x>0\}
\]
and the wedge $\mathcal W$ and takes $\partial \mathbb{H}^+$ into $\partial \mathcal W$. Again we take our branch cut along the negative real axis. Since harmonicity is preserved under conformal mapping, it follows that
\[
h_1(z)=v_1(z^{\frac{\alpha}{\pi}})-u_1^{\mathcal W}(z^{\frac{\alpha}{\pi}}),\ z\in \mathbb{H}^+
\]
is harmonic in $\mathbb{H}^+$ and vanishes on $\partial \mathbb{H}^+$. Moreover, since $u_1^{\mathcal W}\geq 0$, the formula for $v_1$ gives
\begin{equation}\label{eq:hbigobound1wedge}
|h_1(z)|\leq |v_1(z^{\frac{\alpha}{\pi}})|=\mathcal O(|z|^{\frac{2\alpha}{\pi}}),\  z\in \mathbb{H}^+,\  |z|\to \infty.
\end{equation}
By Schwarz reflection, the odd extension of $h_1$ across the $y$-axis is harmonic in $\mathbb{R}^2$ and  satisfies the same bound in $\eqref{eq:hbigobound1wedge}$ on $\mathbb{R}^2$. Our assumption that $\alpha < \frac{\pi}{2}$ implies $\frac{2\alpha}{\pi}<1$ and so as before $h_1$ is constant. But since $h_1=0$ on the $y$-axis, we see $h_1=0$ everywhere giving $u_1^{\mathcal W}(r,\theta)=v_1(r,\theta)$ in $\mathcal W$.

To show that $u_2^{\mathcal W}(r,\theta)=v_2(r,\theta)$ in $\mathcal W$ define
\[
u_2^{\mathcal W_n}(r,\theta)=\mathbb{E}^{(r,\theta)}[\tau_{\mathcal W_n}^2].
\]
We see that $u_2^{\mathcal W_n}$ and $v_2$ satisfy
\begin{equation*}
\left\{
\begin{aligned}
-\Delta u_2^{\mathcal W_n} &= 2u_1^{\mathcal W_n}\ \textup{in }\mathcal W_n,\\
u_2^{\mathcal W_n} &= 0\ \textup{on }\partial \mathcal W_n,
\end{aligned}
\right.
\qquad\qquad\qquad 
\left\{
\begin{aligned}
-\Delta v_2 &= 2v_1\ \textup{in }\mathcal W_n,\\
v_2 &\geq  0\ \textup{on }\partial \mathcal W_n.
\end{aligned}
\right.
\end{equation*}
By \eqref{ineq:trunwedu1}, we thus have that
\[
-\Delta(v_2-u_2^{\mathcal W_n})=2(v_1-u_1^{\mathcal W_n})\geq 0
\]
and $v_2-u_2^{\mathcal W_n}\geq 0$ on $\partial \mathcal W_n$. By the minimum principle for superharmonic functions, we have that
\begin{equation}\label{ineq:trunwedu2}
v_2\geq u_2^{\mathcal W_n} \textup{ in } \mathcal W_n.
\end{equation}
Letting $n\to \infty$ and using the monotone convergence theorem, we see
\[
v_2\geq u_2^{\mathcal W} \textup{ in }\mathcal W.
\]
The displayed inequality above implies that $u_2^{\mathcal W}$ is everywhere finite, and so $u_2^{\mathcal W}$ is one solution to the PDE problem
\[
\left\{
\begin{aligned}
-\Delta u_2^{\mathcal W} &= 2u_1^{\mathcal W}\ \textup{in }\mathcal W,\\
u_2^{\mathcal W} &= 0\ \textup{on }\partial \mathcal W.
\end{aligned}
\right.
\]
We conclude that $v_2-u_2^{\mathcal W}$ is harmonic in $\mathcal W$ and vanishes on $\partial \mathcal W$. Since harmonicity is preserved under conformal mapping,
\[
h_2(z)=v_2(z^{\frac{\alpha}{\pi}})-u_2^{\mathcal W}(z^{\frac{\alpha}{\pi}}),\ z\in \mathbb{H}^+
\]
is harmonic in $\mathbb{H}^+$ and vanishes on $\partial \mathbb{H}^+$. Moreover, since $u_2^{\mathcal W}\geq 0$, the formula for $v_2$ gives
\begin{equation}\label{eq:hbigobound2wedge}
|h_2(z)|\leq |v_2(z^{\frac{\alpha}{\pi}})|=\mathcal O(|z|^{\frac{4\alpha}{\pi}}),\  z\in \mathbb{H}^+,\  |z|\to \infty.
\end{equation}
By Schwarz reflection, the odd extension of $h_2$ across the $y$-axis is harmonic in $\mathbb{R}^2$ and  satisfies the same bound in $\eqref{eq:hbigobound2wedge}$ on $\mathbb{R}^2$. Our assumption that $\alpha < \frac{\pi}{4}$ implies $\frac{4\alpha}{\pi}<1$ and as before $h_2$ is constant. But since $h_2=0$ on the $y-$axis, we see $h_2=0$ everywhere giving $u_2^{\mathcal W}(r,\theta)=v_2(r,\theta)$ in $\mathcal W$.
\end{proof}

\begin{theorem}\label{thm:wedge}
Let $\mathcal W$ be our wedge domain and let $u_1(r,\theta)=\mathbb{E}^{(r,\theta)}[\tau_{\mathcal W}]$ and $u_2(r,\theta)=\mathbb{E}^{(r,\theta)}[\tau_{\mathcal W}^2]$  be the first two moments of the exit time from $\mathcal W$. Assume $\alpha<\frac{\pi}{4}$. Given positive numbers $c_1$ and $c_2$, if the level curves $u_1(r,\theta)=c_1$ and $u_2(r,\theta)=c_2$ intersect, then their intersection is either two points or a single point determined by $c_1$ and $c_2$.
\end{theorem}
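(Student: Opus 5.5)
Using the explicit formulas from Proposition \ref{prop:pepformsswedge}, I would eliminate $r^2$ between the two level-curve equations and reduce the intersection to a single equation in $\cos(2\theta)$. Write $s=\cos(2\theta)$. On $\mathcal W$ we have $|\theta|<\alpha/2$, so $s\in(\cos\alpha,1]$, and $u_1>0$ there. Substituting $u_1=c_1$ into $u_2=c_2$ gives
\[
\frac{c_2}{c_1}=\frac{r^2}{24\cos(2\alpha)}\bigl(2\cos(\alpha)\, s+1-3\cos(2\alpha)\bigr),
\]
while $u_1=c_1$ gives
\[
r^2=\frac{4c_1\cos\alpha}{s-\cos\alpha}.
\]
Eliminating $r^2$ yields
\[
\frac{6c_2\cos(2\alpha)}{c_1^2\cos\alpha}\,(s-\cos\alpha)=2\cos(\alpha)\, s+1-3\cos(2\alpha).
\]
This is \emph{linear} in $s$.

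The slope coefficients are $K=\frac{6c_2\cos(2\alpha)}{c_1^2\cos\alpha}$ on the left and $2\cos\alpha$ on the right. If $K\neq 2\cos\alpha$, the equation has the unique solution
\[
s=\frac{K\cos\alpha+1-3\cos(2\alpha)}{K-2\cos\alpha}.
\]
Since the level curves are assumed to intersect, this $s$ must lie in $(\cos\alpha,1]$. Then $\theta$ is determined up to sign by $\cos(2\theta)=s$ with $2|\theta|<\alpha<\pi$, and $r>0$ is determined uniquely by $r^2=4c_1\cos\alpha/(s-\cos\alpha)$. If $s=1$ we get $\theta=0$, a single point on the $x$-axis; if $s<1$ we get $\pm\theta$, two points symmetric about the $x$-axis.

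The degenerate case $K=2\cos\alpha$ is where I expect the only real obstacle. In that case the equation collapses to
\[
-K\cos\alpha=1-3\cos(2\alpha),
\]
that is, $-2\cos^2\alpha=1-3\cos(2\alpha)$. Using $\cos(2\alpha)=2\cos^2\alpha-1$, this becomes $4\cos^2\alpha=4$. That is impossible since $0<\alpha<\pi/4$, so the degenerate case yields no solutions and contradicts the assumption that the level curves intersect.

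I would also check, as a sanity step, that $\cos(2\alpha)>0$ under $\alpha<\pi/4$. This keeps the formulas well defined. The positivity of $u_2$ from Proposition \ref{prop:pepformsswedge} is not needed in this argument beyond the hypothesis that the level sets meet.
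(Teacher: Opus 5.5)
Your proposal is correct and is essentially the paper's argument: eliminate one unknown between the two level-set equations to get a unique value, back-substitute to fix the other, and note that $\theta$ is then determined up to sign. The only difference is the direction of elimination. The paper substitutes $\cos(2\theta)=\left(\frac{4c_1}{r^2}+1\right)\cos\alpha$ into $u_2=c_2$ and gets $r^2=\bigl(\frac{6c_2}{c_1}\cos(2\alpha)-2c_1\cos^2\alpha\bigr)/\sin^2\alpha$, whose coefficient is visibly nonzero. Your elimination in terms of $s$ instead needs the extra degenerate-case check $K=2\cos\alpha$, which you handle correctly; that case corresponds exactly to $r=0$.
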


\begin{proof}
Assuming that the level curves $u_1(r,\theta)=c_1$ and $u_2(r,\theta)=c_2$ intersect, we use the formulas from Proposition \ref{prop:pepformsswedge}. Solving $u_1(r,\theta)=c_1$ for $\cos(2\theta)$ gives
\begin{equation}\label{eq:cos2th}
\cos(2\theta)=\left(\frac{4c_1}{r^2}+1\right)\cos(\alpha).
\end{equation}
Plugging \eqref{eq:cos2th} into $u_2(r,\theta)=c_2$ and solving for $r^2$ gives
\begin{equation}\label{eq:rsqwedge}
r^2=\frac{\frac{6c_2}{c_1}\cos(2\alpha) - 2c_1\cos^2(\alpha)}{\sin^2(\alpha)}.
\end{equation}
By assumption, \eqref{eq:rsqwedge} has one positive solution in $r$. Plugging this $r$-value into \eqref{eq:cos2th}, our assumption on the level curves gives the existence of at least one solution to \eqref{eq:cos2th} in $\theta$ for some $-\frac{\alpha}{2}<\theta<\frac{\alpha}{2}$. If $\left(\frac{4c_1}{r^2}+1\right)\cos(\alpha)=1$, then $\theta=0$ and the level curves $u_1(r,\theta)=c_1$ and $u_2(r,\theta)=c_2$ intersect at a single point in $\mathcal W$ on the $x$-axis. If $\left(\frac{4c_1}{r^2}+1\right)\cos(\alpha)<1$, then since $\cos(2\theta)$ is strictly decreasing on $\left(0, \frac{\pi}{4}\right)$ and $\alpha<\frac{\pi}{4}$, it follows that \eqref{eq:cos2th} has precisely two solutions in $\left(-\frac{\alpha}{2},\frac{\alpha}{2}\right)$, one positive and one negative. We deduce that the level curves $u_1(r,\theta)=c_1$ and $u_2(r,\theta)=c_2$ intersect at two points in $\mathcal W$, symmetric across the $x$-axis.
\end{proof}

As before, we have the following corollary to Theorem \ref{thm:wedge} on location detection.

\begin{corollary}\label{cor:locdetwedge}
Consider a wedge domain $\mathcal W$ with $\alpha<\frac{\pi}{4}$. Given values of the first two moments of the exit time at some unknown point $(x,y)$ in $\mathcal W$, one can detect the location $(x,y)$ up to symmetry. Equivalently, given the mean exit time and variance of exit time at some unknown point $(x,y)$ in $\mathcal W$, one can detect the location $(x,y)$ up to symmetry.
\end{corollary}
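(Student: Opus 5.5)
The statement is Corollary \ref{cor:locdetwedge}, and it follows directly from Theorem \ref{thm:wedge}, in the same way Corollaries \ref{cor:locdetell}, \ref{cor:locdetpar} and \ref{cor:locdethyp} follow from their theorems.

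Fix an unknown point $(x,y)\in\mathcal W$, written in polar form as $(r,\theta)$. Suppose we are given $c_1=\mathbb{E}^{(x,y)}[\tau_{\mathcal W}]$ and $c_2=\mathbb{E}^{(x,y)}[\tau_{\mathcal W}^2]$. By Proposition \ref{prop:pepformsswedge}, with $\alpha<\frac{\pi}{4}$, both moments are finite. They are also strictly positive in $\mathcal W$, because the formulas for $u_1$ and $u_2$ are positive there, so $c_1,c_2>0$. The unknown point lies on both level curves $u_1=c_1$ and $u_2=c_2$, so these curves intersect.

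Theorem \ref{thm:wedge} then says the intersection is either a single point on the $x$-axis or two points symmetric across the $x$-axis. Its proof makes these points explicit. First, $r^2$ is computed from $c_1,c_2$ by \eqref{eq:rsqwedge}. Then $\theta$ is recovered up to sign from \eqref{eq:cos2th}, using that $\cos(2\theta)$ is injective on $[0,\frac{\alpha}{2})$ because $\alpha<\frac{\pi}{4}$. So the data $(c_1,c_2)$ determine the location up to reflection across the $x$-axis, which is the only symmetry of $\mathcal W$. This proves the first assertion.

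For the equivalent formulation, use the identity $\textup{Var}^{(x,y)}[\tau]=\mathbb{E}^{(x,y)}[\tau^2]-\left(\mathbb{E}^{(x,y)}[\tau]\right)^2$. It gives a bijection between the pairs (mean, variance) and (first moment, second moment), so knowing the mean and variance is the same as knowing $c_1$ and $c_2$, and the previous paragraph applies.

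There is no real obstacle here. The one point to check is that the hypothesis of Theorem \ref{thm:wedge}, namely that $c_1,c_2$ are positive and the level curves intersect, is automatically satisfied by values coming from an actual point of $\mathcal W$. This is exactly the positivity and finiteness observation made above.
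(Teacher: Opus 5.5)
Your proposal is correct and follows the paper's route: the paper presents this corollary as an immediate consequence of Theorem \ref{thm:wedge} together with the identity $\textup{Var}^{(x,y)}[\tau]=\mathbb{E}^{(x,y)}[\tau^2]-\left(\mathbb{E}^{(x,y)}[\tau]\right)^2$. Your check that values coming from an actual point of $\mathcal W$ are positive and make the level curves intersect fills in a step the paper leaves implicit. One minor aside: $\cos(2\theta)$ is injective on $[0,\frac{\alpha}{2})$ for any $\alpha<\pi$, so the hypothesis $\alpha<\frac{\pi}{4}$ is really needed for the formula for $u_2$ in Proposition \ref{prop:pepformsswedge}, not for that injectivity step.
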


\section{Equilateral Triangular Domains}\label{sect:tri}

Throughout this section we let $\mathcal T$ denote the equilateral triangle interior to the three lines
\begin{align*}
y&=0,\\
y&=\sqrt{3}x + b, \\
y&=-\sqrt{3}x + b,
\end{align*}
for some positive real number $b$. The triangle $\mathcal T$ has three median lines or lines of symmetry prescribed by $x = 0$,  $y = \frac{x\sqrt3}{3} + \frac{b}{3}, $ and $ y = -\frac{x\sqrt3}{3} + \frac{b}{3}.$ See Figure \ref{fig:tri} below.

\begin{center}
\begin{figure}[H]
    \centering

\begin{tikzpicture}[scale=1, font=\small]

    \def\b{3} 
    \pgfmathsetmacro{\xintercept}{\b/sqrt(3)}
    \pgfmathsetmacro{\centroidY}{\b/3}

    \coordinate (Top) at (0, \b);
    \coordinate (Left) at (-\xintercept, 0);
    \coordinate (Right) at (\xintercept, 0);
    \coordinate (Origin) at (0,0);
    
    \coordinate (MidLeft) at (-\xintercept/2, \b/2);
    \coordinate (MidRight) at (\xintercept/2, \b/2);
    \coordinate (MidBase) at (0,0);


        ({-\xintercept - 0.5}, {-sqrt(3)*0.5}) -- ({0.5}, {\b + sqrt(3)*0.5});
        ({\xintercept + 0.5}, {-sqrt(3)*0.5}) -- ({-0.5}, {\b + sqrt(3)*0.5});
    \draw[thick, black] (-3, 0) -- (3, 0);
    \draw[thick, black] (0, 0) -- (0, -.75);

    \draw[thick, blue] (Left) -- (Top) -- (Right) -- cycle;

    \draw[black, thick] (0, 4) -- (Top);               
    \draw[dashed, black, thick] (Top) -- (MidBase);    
    \draw[dashed, black, thick] (Left) -- (MidRight);
    \draw[dashed, black, thick] (Right) -- (MidLeft);

    \fill (0,\b) circle (2.5pt) ;
    \fill (Left) circle (2.5pt) ;
    \fill (Right) circle (2.5pt) ;
    
    \node[right = 0.3em] at (Top) {$(0, b)$};
    \node[below left = 0.3em] at (Left) {$(-\frac{b}{\sqrt3},0)$};
    \node[below right = 0.3em] at (Right)  {$(\frac{b}{\sqrt3},0)$};
    \node[left, blue] at (.6, 1.8) {$\mathcal{T}$};

\end{tikzpicture}
\caption{The triangular region $\mathcal T$ together with its dashed median lines.}
\label{fig:tri}
\end{figure}
\end{center}
The main result of this section is the following.
\begin{theorem}\label{th:eqtr}
Let $\mathcal T$ be the equilateral triangular domain above. Let $u_1(x,y)=\mathbb{E}^{(x,y)}[\tau_{\mathcal T}]$ and $u_2(x,y)=\mathbb{E}^{(x,y)}[\tau_{\mathcal T}^2]$  be the first two moments of the exit time from $\mathcal T$. Then
\begin{align*}
u_1(x,y)&= \frac{1}{4b}y(y-\sqrt{3}x-b)(y+\sqrt{3}x-b)=\frac{y((y-b)^2 -3 x^2 ) }{4 b},\\
u_2(x,y)&= u_1(x,y)\left(\frac{b^2 + 2 b y - 3 (x^2 + y^2)}{24}\right).
\end{align*}
Moreover, given positive real numbers $c_1,c_2$, if the level curves $u_1(x,y)=c_1$ and $u_2(x,y)=c_2$ intersect, then they intersect in either one, three, or six points depending on $c_1$ and $c_2$.
\end{theorem}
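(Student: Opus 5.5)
We first identify $u_1$ and $u_2$ as exit time moments, and then reduce the intersection problem to one-variable trigonometry using the threefold symmetry of $\mathcal T$.

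\emph{Identifying the moments.} Since $\mathcal T$ is bounded, uniqueness for the Poisson hierarchy \eqref{eq:poisshier} means it suffices to verify that the stated functions solve it.
The function $u_1$ is $\frac{1}{4b}$ times the product of the three linear forms defining the sides, so it vanishes on $\partial\mathcal T$. Writing $u_1 = \frac{1}{4b}\left(y(y-b)^2-3x^2y\right)$, we get
\[
\Delta u_1=\frac{1}{4b}\left(6y-4b-6y\right)=-1 .
\]
Because $u_1$ is a factor of $u_2$, the function $u_2$ also vanishes on $\partial\mathcal T$. For the interior equation we use $\Delta(fg)=g\Delta f+2\nabla f\cdot\nabla g+f\Delta g$ with $f=u_1$ and $g=Q:=\frac{b^2+2by-3(x^2+y^2)}{24}$. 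Since $\Delta Q=-\frac12$, it remains to check that $-Q+2\nabla u_1\cdot\nabla Q-\frac12u_1=-2u_1$. This is a routine polynomial identity. Hence $u_1=\mathbb E^{(x,y)}[\tau_{\mathcal T}]$ and $u_2=\mathbb E^{(x,y)}[\tau_{\mathcal T}^2]$.

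\emph{The second level curve is a circle about the centroid.} Suppose $u_1=c_1>0$ and $u_2=c_2$ at a point. Dividing gives $Q(x,y)=c_2/c_1$. Completing the square,
\[
Q=\frac{1}{24}\left(\frac{4b^2}{3}-3\left(x^2+\left(y-\tfrac b3\right)^2\right)\right),
\]
so the condition $Q=c_2/c_1$ says exactly that $\rho^2:=x^2+(y-\frac b3)^2$ equals a constant determined by $c_1,c_2$. Thus every intersection point lies on a circle of radius $\rho$ centered at the centroid $(0,\frac b3)$.

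\emph{$u_1$ in polar coordinates about the centroid.} Introduce polar coordinates $x=\rho\cos\varphi$, $y-\frac b3=\rho\sin\varphi$. The cubic $u_1$ is invariant under the dihedral symmetry group of $\mathcal T$, so it should take the form
\[
u_1=\alpha-\beta\rho^2+\gamma\rho^3\sin(3\varphi),
\]
with explicit constants $\alpha,\beta>0$ and $\gamma\neq 0$. This is the step where the actual computation lies, and I expect it to be the main obstacle. The plan is to substitute $y=\frac b3+\rho\sin\varphi$, expand, and use $\sin 3\varphi=3\sin\varphi-4\sin^3\varphi$. The expected values are $\alpha=\frac{b^2}{27}$, $\beta=\frac14$, and $\gamma=-\frac1{4b}$, but they must be confirmed.

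\emph{Counting the intersection points.} With $\rho$ fixed, the equation $u_1=c_1$ becomes $\sin(3\varphi)=k$ for an explicit constant $k$.
\begin{itemize}
\item If $\rho=0$, the intersection is the single point $(0,\frac b3)$, the centroid.
\item If $\rho>0$ and $|k|<1$, the solutions are $\varphi=\varphi_0+\frac{2\pi j}{3}$ and $\varphi=\pi-\varphi_0+\frac{2\pi j}{3}$ for $j=0,1,2$. These are six points forming one orbit under the symmetry group of $\mathcal T$.
\item If $\rho>0$ and $k=\pm1$, the two families coincide, giving three points on the medians.
\end{itemize}
It remains to see that all of these points lie in $\mathcal T$, not just some of them. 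By hypothesis at least one intersection point lies in $\mathcal T$. The full solution set on the circle is precisely its orbit under the symmetries of $\mathcal T$, and that orbit stays in $\mathcal T$. Therefore the intersection consists of exactly one, three, or six points, as claimed.
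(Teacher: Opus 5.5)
Your proof is correct, and it takes a genuinely different route from the paper's.

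The step you flagged checks out. With $s=y-\frac b3$ one has $y\bigl((y-b)^2-3x^2\bigr)=\frac{4b^3}{27}-b(x^2+s^2)+(s^3-3x^2s)$ and $s^3-3x^2s=-\rho^3\sin(3\varphi)$. So $u_1=\frac{b^2}{27}-\frac{\rho^2}{4}-\frac{\rho^3}{4b}\sin(3\varphi)$, exactly as you predicted. Your product-rule identity for $u_2$, equivalently $-Q+2\nabla u_1\cdot\nabla Q+\frac32u_1=0$, also holds.

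The paper never uses polar coordinates about the centroid. It eliminates $x^2$ via $u_1=c_1$ to get the cubic $y^3-by^2+\frac{6c_2}{c_1}y-bc_1=0$, which bounds the intersection by $3\times 2=6$ points. It then generates orbit points from the reflection and $120^\circ$ rotation invariance of $u_1,u_2$. The centroid is handled by substitution, where the cubic becomes $(y-\frac b3)^3$. A known solution on a median is handled by computing the discriminant of the cubic as $\frac{3x^2}{16}\bigl(3x^2-(b-3y)^2\bigr)^2$, which vanishes exactly on the medians.

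Your observation that $u_2/u_1=Q$ is radial about the centroid, combined with the $\sin(3\varphi)$ form of $u_1$, makes the symmetry the engine of the argument rather than an add-on. This buys you several things:
\begin{itemize}
\item The solution set is visibly a single $D_3$-orbit on a circle whose radius is fixed by $c_1,c_2$.
\item The three cases are read off directly from $\rho=0$, $|k|=1$, $|k|<1$; your condition $|k|=1$ is the geometric content of the paper's vanishing discriminant.
\item The fact that every solution of the polynomial system lies in $\mathcal T$ follows cleanly from the orbit argument, whereas the paper's median case relies on a rather terse root count.
\end{itemize}
The paper's elimination is more mechanical and gives the $y$-coordinates of the intersection points as roots of an explicit cubic. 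However, it needs the discriminant computation to detect the degenerate cases that your parametrization exposes directly.
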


\begin{proof}
If $u_1(x,y)$ and $u_2(x,y)$ denote the polynomials in the theorem statement, then it is straightforward to check that they satisfy
\begin{equation*}
\left\{
\begin{aligned}
-\Delta u_1 &= 1\ \textup{in }\mathcal T,\\
u_1 &= 0\ \textup{on }\partial \mathcal T,
\end{aligned}
\right.
\qquad\qquad\qquad 
\left\{
\begin{aligned}
-\Delta u_2 &= 2u_1\ \textup{in }\mathcal T,\\
u_2 &= 0\ \textup{on }\partial \mathcal T.
\end{aligned}
\right.
\end{equation*}
Since $\mathcal T$ is bounded, $u_1(x,y)=\mathbb{E}^{(x,y)}[\tau_{\mathcal T}]$ and $u_2(x,y)=\mathbb{E}^{(x,y)}[\tau_{\mathcal T}^2]$ have the claimed formulas for the exit time moments.

Solving $u_1(x,y)=c_1$ for $x^2$ gives
\begin{equation}\label{eq:trixse}
x^2=\frac{1}{3}\left((y-b)^2-\frac{4bc_1}{y}\right).
\end{equation}
Using this expression for $x^2$ together with the equation $u_1(x,y)=c_1$, we rewrite the equation $u_2(x,y)=c_2$ as
\begin{equation}\label{eq:triycubic}
y^3 - by^2 + \frac{6c_2}{c_1}y - bc_1 = 0.
\end{equation}
Note that there are at most three distinct real solutions $y$ to \eqref{eq:triycubic} and that for each such solution $y$, there are at most two distinct real solutions $x$ to \eqref{eq:trixse}. Therefore, there are a maximum of six solutions $(x,y)$ that satisfy $u_1(x,y) = c_1$ and $u_2(x,y)=c_2$.

To better pin down the number of solutions, we note that $u_1(x,y)$ and $u_2(x,y)$ are invariant under the maps
\begin{align*}
(x,y)&\mapsto (-x,y),\\
(x,y)&\mapsto \left(-\frac{x}{2} - \frac{y\sqrt{3}}{2} + \frac{b\sqrt3}{6}, 
\frac{\sqrt{3}}{2}x - \frac{y}{2} + \frac{b}{2}\right).
\end{align*}
That is, the functions $u_1$ and $u_2$ are invariant under reflection across the $y$-axis and under $120^{\circ}$ counter-clockwise rotation about $\left(0, \frac{b}{3}\right)$. It follows that if $x$ and $y$ solve $\eqref{eq:trixse}$ and $\eqref{eq:triycubic}$, then other solutions may be obtained through (potentially repeated) rotation and reflection of the point $(x,y)$.

Since the level curves $u_1=c_1$ and $u_2=c_2$ intersect, the equations \eqref{eq:trixse} and \eqref{eq:triycubic} each have at least one solution $(x^\ast,y^\ast)\in \mathcal T$. In what follows we fix this known solution and consider what happens based on its location within $\mathcal T$.  See Figure \ref{fig:threeposs} below.

\begin{figure}[H]
\noindent
\begin{minipage}[t]{0.3\textwidth}
    \centering \textbf{Possibility $1$} \\
    \begin{center}
    \begin{tikzpicture}[scale=1, font=\small]

    \def\b{3} 
    \pgfmathsetmacro{\xintercept}{\b/sqrt(3)}
    \pgfmathsetmacro{\centroidY}{\b/3}

    \coordinate (Top) at (0, \b);
    \coordinate (Left) at (-\xintercept, 0);
    \coordinate (Right) at (\xintercept, 0);
    \coordinate (Origin) at (0,0);
    \coordinate (Centroid) at (0, \centroidY);
    
    \coordinate (MidLeft) at (-\xintercept/2, \b/2);
    \coordinate (MidRight) at (\xintercept/2, \b/2);
    \coordinate (MidBase) at (0,0);

    \draw[thick, blue] (Left) -- (Top) -- (Right) -- cycle;

    \draw[dashed, black, thick] (Top) -- (MidBase);    
    \draw[dashed, black, thick] (Left) -- (MidRight);
    \draw[dashed, black, thick] (Right) -- (MidLeft);

    \coordinate (P1) at (.3, 2.1);
    \node[right=5pt] at (.3, 2.1) {$(x^{\ast},y^{\ast})$};
    
    \coordinate (P1_ref) at (-.3, 2.1);

    \coordinate (P2) at ([rotate around={120:(Centroid)}]P1);
    \coordinate (P3) at ([rotate around={240:(Centroid)}]P1);
    
    \coordinate (P4) at ([rotate around={120:(Centroid)}]P1_ref);
    \coordinate (P5) at ([rotate around={240:(Centroid)}]P1_ref);

    \fill[black] (P1) circle (1.5pt);
    \fill[black] (P2) circle (1.5pt);
    \fill[black] (P3) circle (1.5pt);
    \fill[black] (P1_ref) circle (1.5pt);
    \fill[black] (P4) circle (1.5pt);
    \fill[black] (P5) circle (1.5pt);

\end{tikzpicture}
\end{center}
\end{minipage}
\hfill
\begin{minipage}[t]{0.3\textwidth}
    \centering \textbf{Possibility $2$} \\
      \begin{center}
    \begin{tikzpicture}[scale=1, font=\small]

    \def\b{3} 
    \pgfmathsetmacro{\xintercept}{\b/sqrt(3)}
    \pgfmathsetmacro{\centroidY}{\b/3}

    \coordinate (Top) at (0, \b);
    \coordinate (Left) at (-\xintercept, 0);
    \coordinate (Right) at (\xintercept, 0);
    \coordinate (Origin) at (0,0);
    \coordinate (Centroid) at (0, \centroidY);
    
    \coordinate (MidLeft) at (-\xintercept/2, \b/2);
    \coordinate (MidRight) at (\xintercept/2, \b/2);
    \coordinate (MidBase) at (0,0);

    \draw[thick, blue] (Left) -- (Top) -- (Right) -- cycle;

    \draw[dashed, black, thick] (Top) -- (0,\b/3 - 0.5);
    \draw[dashed, black, thick] (0,0) -- (0,0.1);    
    \draw[dashed, black, thick] (Left) -- (MidRight);
    \draw[dashed, black, thick] (Right) -- (MidLeft);

    \coordinate (P1) at (0, \b/3);
    \node at (0, \b/8+.02) {$(x^{\ast},y^{\ast})$};
    
    \coordinate (P2) at ([rotate around={120:(Centroid)}]P1);
    \coordinate (P3) at ([rotate around={240:(Centroid)}]P1);

    \fill[black] (P1) circle (1.5pt);
    \fill[black] (P2) circle (1.5pt);
    \fill[black] (P3) circle (1.5pt);

\end{tikzpicture}
\end{center}
    
\end{minipage}
\hfill
\begin{minipage}[t]{0.3\textwidth}
    \centering\textbf{Possibility $3$} \\
    \begin{center}
    \begin{tikzpicture}[scale=1, font=\small]

    \def\b{3} 
    \pgfmathsetmacro{\xintercept}{\b/sqrt(3)}
    \pgfmathsetmacro{\centroidY}{\b/3}

    \coordinate (Top) at (0, \b);
    \coordinate (Left) at (-\xintercept, 0);
    \coordinate (Right) at (\xintercept, 0);
    \coordinate (Origin) at (0,0);
    \coordinate (Centroid) at (0, \centroidY);
    
    \coordinate (MidLeft) at (-\xintercept/2, \b/2);
    \coordinate (MidRight) at (\xintercept/2, \b/2);
    \coordinate (MidBase) at (0,0);

    \draw[thick, blue] (Left) -- (Top) -- (Right) -- cycle;

    \draw[dashed, black, thick] (Top) -- (MidBase);    
    \draw[dashed, black, thick] (Left) -- (MidRight);
    \draw[dashed, black, thick] (Right) -- (MidLeft);

    \coordinate (P1) at (0, 2.5);
    \node[right=7pt,] at (0, 2.5) {$(x^{\ast},y^{\ast})$};
    
    \coordinate (P2) at ([rotate around={120:(Centroid)}]P1);
    \coordinate (P3) at ([rotate around={240:(Centroid)}]P1);

    \fill[black] (P1) circle (1.5pt);
    \fill[black] (P2) circle (1.5pt);
    \fill[black] (P3) circle (1.5pt);

\end{tikzpicture}
\end{center}
\end{minipage}
\caption{Three possible locations of the known solution $(x^{\ast},y^{\ast})$.}
\label{fig:threeposs}
\end{figure}

If $(x^{\ast},y^{\ast})$ does not fall along a median line of $\mathcal T$, then repeated rotation and reflection give six distinct points on the level curves $u_1=c_1$ and $u_2=c_2$. By our root counting, these points account for the full intersection of the level curves $u_1=c_1$ and $u_2=c_2$.

Next, suppose the centroid $(x^{\ast},y^{\ast}) = (0, \frac{b}{3})$ is the known solution. Using the explicit formulas for $u_1$ and $u_2$, we see that
$u_1(0, \frac{b}{3}) = \frac{b^2}{27}=c_1$ and $u_2(0, \frac{b}{3}) = \frac{b^4}{486}=c_2$. Plugging these values for $c_1$ and $c_2$ back into \eqref{eq:triycubic} gives
\begin{equation*}
     y^3  - by^2+ \frac{6c_2}{c_1}y  - bc_1 = \left(y- \frac{b}{3}\right)^3=0.
\end{equation*}
Consequently, $\frac{b}{3}$ is the only solution for $y$. Likewise, plugging into \eqref{eq:trixse} gives
\[
x^2=\frac{1}{3}\left((y-b)^2-\frac{4bc_1}{y}\right)=0
\]
and so $0$ is the only solution for $x$. We conclude that the level curves $u_1=c_1$ and $u_2=c_2$ intersect at a single point, namely $(0, \frac{b}{3})$.

Finally, suppose that our known solution $(x^{\ast},y^{\ast}) \ne (0, \frac{b}{3})$ lies along a median line of $\mathcal T$.  Consider the cubic polynomial
\[
P(y)=y^3 - by^2 + \frac{6c_2}{c_1}y - bc_1
\]
which comes from equation \eqref{eq:triycubic}. The discriminant of $P$, which we denote by $\textup{Disc}(P)$, satisfies
\begin{equation}\label{eq:disc}
    \operatorname{Disc}(P) = -\frac{4b^4c_1^4 + 27b^2c_1^5 - 108b^2c_1^3c_2 - 36b^2c_1c_2^2 + 864c_2^3}{c_1^3}.
\end{equation}
The value of the discriminant gives information about the roots of $P$. Using the level set equations $u_1=c_1$ and $u_2=c_2$, the discriminant can be rewritten as
\begin{equation}\label{eq:discPxy}
\textup{Disc}(P)=\frac{3x^2}{16}\left(3x^2-(b-3y)^2 \right)^2.
\end{equation}
Since $(x^{\ast}, y^{\ast})$ lies on a median line, it follows that $\operatorname{Disc}(P)=0$ and so $P$ has a repeated root. It follows that \eqref{eq:triycubic} has a maximum of two distinct real solutions in $y$, and by repeated rotation of $(x^\ast, y^\ast)$, \eqref{eq:triycubic} has exactly two solutions. The intersection of the level curves  $u_1=c_1$ and $u_2=c_2$ thus consists of three rotationally symmetric points about the centroid $(0,\frac{b}{3})$.
\end{proof}

As before, we have the following corollary to Theorem \ref{th:eqtr}.

\begin{corollary}\label{cor:locdettri}
Consider the equilateral triangular domain $\mathcal T$. Given values of the first two moments of the exit time at some unknown point $(x,y)$ in $\mathcal T$, one can detect the location $(x,y)$ up to symmetry. Equivalently, given the mean exit time and the variance of the exit time at some unknown point $(x,y)$ in $\mathcal T$, one can detect the location $(x,y)$ up to symmetry.
\end{corollary}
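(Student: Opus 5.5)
The statement is Corollary \ref{cor:locdettri}, and I would deduce it directly from Theorem \ref{th:eqtr}, in the same way Corollaries \ref{cor:locdetell}, \ref{cor:locdetpar}, \ref{cor:locdethyp}, and \ref{cor:locdetwedge} follow from their theorems.

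Fix an unknown point $(x_0,y_0)\in\mathcal T$. Set $c_1=u_1(x_0,y_0)$ and $c_2=u_2(x_0,y_0)$, which are the given values of the first two exit time moments. Both are positive, because the formulas in Theorem \ref{th:eqtr} are products of factors that are strictly positive on $\mathcal T$. For $u_2$ this uses
\[
b^2+2by-3(x^2+y^2)>0 \quad \text{on } \mathcal T .
\]
That inequality holds because the quadratic $3x^2+3y^2-2by-b^2$ is negative on the disk of radius $\frac{2b}{3}$ about the centroid $(0,\frac b3)$, and this disk is the circumcircle region containing $\mathcal T$.

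The level curves $u_1=c_1$ and $u_2=c_2$ meet, since both pass through $(x_0,y_0)$. Theorem \ref{th:eqtr} therefore says their intersection consists of one, three, or six points. The proof of that theorem shows these points form the orbit of $(x_0,y_0)$ under the symmetry group of $\mathcal T$, which is generated by reflection in the $y$-axis and rotation by $120^\circ$ about the centroid. This orbit is computable from $(c_1,c_2)$ alone: solve the cubic \eqref{eq:triycubic} for $y$, then solve \eqref{eq:trixse} for $x$. So $(x_0,y_0)$ is determined up to the symmetries of $\mathcal T$.

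For the equivalent formulation, note that $\operatorname{Var}^{(x,y)}[\tau]=u_2-u_1^2$. Knowing the mean and the variance therefore gives $c_1$ and $c_2=\operatorname{Var}+c_1^2$, and the argument above applies.

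The only step needing care is identifying the intersection set with the symmetry orbit, as opposed to merely counting its points. The proof of Theorem \ref{th:eqtr} supplies this identification: each of its three cases exhibits the full intersection as rotations and reflections of one known solution.
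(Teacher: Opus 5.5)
Your proposal is correct and follows the paper's approach: the paper obtains this corollary directly from Theorem \ref{th:eqtr}, whose proof exhibits the full intersection as the rotation/reflection orbit of one known solution, combined with $\textup{Var}^{(x,y)}[\tau]=u_2-u_1^2$. Your checks that $c_1,c_2>0$ (using the circumscribed disk of radius $\frac{2b}{3}$ about the centroid) and that the intersection is exactly that symmetry orbit are valid details the paper leaves implicit.
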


\begin{remark}
One might wonder whether the methods of this section generalize since other triangular domains can be realized as interiors of three lines. However, the equilateral triangle is unique in that it is the only triangular domain whose moments can be constructed as in Theorem \ref{th:eqtr}.
\end{remark}

\section{Rectangular Domains}\label{sect:rect}
Throughout this section we let $\mathcal R$ denote the rectangular domain
\[
\mathcal R = (0,a) \times (0,b)=\{(x,y)\in \mathbb{R}^2:0<x<a,\ 0<y<b\}.
\]
In order to consider non-square rectangles, we assume our rectangles are taller than they are wide. Consequently, we assume $b > a$. See Figure \ref{fig:rect} below.

\begin{figure}[H]
\begin{center}
\begin{tikzpicture}[scale=1.2]
    \def\a{3}
    \def\b{5}

    \draw[thick] (-0.5, 0) -- (\a + 1.5, 0) ;
    \draw[thick] (0, -0.5) -- (0, \b + 1);
    \draw[thick, blue] (0,0) -- (\a,0) -- (\a,\b) -- (0,\b) -- cycle;

    \fill (\a,0) circle (2.5pt) ;
    \fill (\a,\b) circle (2.5pt) ;
    \fill (0,\b) circle (2.5pt) ;
    \fill (0,0) circle (2.5pt) ;

    \node[below] at (\a, 0) {$(a,0)$};
    \node[left] at (0, \b) {$(0,b)$};
    \node[right] at (\a, \b) {$(a,b)$};
    \node[below left] at (0, 0) {$(0,0)$};
    
    \node[blue] at (\a/2, \b/2) {\Large $\mathcal R$};


\end{tikzpicture}
\end{center}
\caption{The rectangular domain $\mathcal R$ with width $a$ and height $b$.}
\label{fig:rect}
\end{figure}

The eigenvalue problem for the Dirichlet Laplacian
 \[
       \left\{
        \begin{aligned}
            -\Delta \phi &= \lambda\phi \text{ in } \mathcal R, \\
            \phi &= 0  \text{ on } \partial \mathcal R, 
        \end{aligned}
        \right.
\]
admits a nontrivial solution for the eigenvalues
\begin{equation}\label{eq:evaluesrect}
\lambda_{m,n} = \left(\frac{m\pi}{a}\right)^2+\left(\frac{n\pi}{b}\right)^2
\end{equation}
and corresponding eigenfunctions
\begin{equation}\label{eq:efunctionsrect}
\phi_{m,n}(x,y)=\sin \left(\frac{m\pi x}{a}\right)\sin \left(\frac{n\pi y}{b}\right)
\end{equation}
for $m,n\geq 1$. These constants and functions can be found using the technique of separation of variables. Our next result gives series representations for the exit time moments in a rectangular domain.

\begin{proposition}\label{prop:momsrect}
Let $\mathcal R$ be a rectangle as above. Then the exit time moments $u_k(x,y)=\mathbb{E}^{(x,y)}[\tau_{\mathcal R}^k]$ satisfy
\[
u_k(x,y)=\sum_{\substack{m, n>0 \\ \text{$m, n$ odd}}}\frac{16k!}{\pi^2 mn\lambda_{m,n}^k}\sin \left(\frac{m\pi x}{a}\right)\sin \left(\frac{n\pi y}{b}\right),
\]
with $\lambda_{m,n}$ as in \eqref{eq:evaluesrect}.
\end{proposition}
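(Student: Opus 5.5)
We prove the formula by induction on $k$, using the Poisson hierarchy \eqref{eq:poisshier} and the eigenfunction expansion in the Dirichlet eigenbasis $\{\phi_{m,n}\}$ of \eqref{eq:efunctionsrect}. These eigenfunctions form a complete orthogonal system in $L^2(\mathcal R)$ with $\|\phi_{m,n}\|_{L^2}^2=ab/4$. Since $\mathcal R$ is bounded, $u_k$ is the unique solution of $-\Delta u_k=ku_{k-1}$ in $\mathcal R$, $u_k=0$ on $\partial\mathcal R$. So it suffices to identify the Fourier coefficients of $u_k$ and then check convergence.

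\textbf{Base expansion.} First we expand the constant $u_0=1$. We have
\[
\int_0^a\sin\left(\frac{m\pi x}{a}\right)dx=\frac{a(1-(-1)^m)}{m\pi},
\]
which equals $\frac{2a}{m\pi}$ for $m$ odd and $0$ for $m$ even. Hence
\[
1=\sum_{m,n\text{ odd}}\frac{16}{\pi^2mn}\phi_{m,n}
\]
in $L^2(\mathcal R)$.

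\textbf{Coefficient recursion.} Write $u_k=\sum c^{(k)}_{m,n}\phi_{m,n}$ with $c^{(k)}_{m,n}=\frac{4}{ab}\int_{\mathcal R}u_k\phi_{m,n}$. Green's second identity, together with $u_k=\phi_{m,n}=0$ on $\partial\mathcal R$, gives
\[
\int_{\mathcal R}(-\Delta u_k)\phi_{m,n}=\int_{\mathcal R}u_k(-\Delta\phi_{m,n})=\lambda_{m,n}\int_{\mathcal R}u_k\phi_{m,n}.
\]
Therefore $\lambda_{m,n}c^{(k)}_{m,n}=k\,c^{(k-1)}_{m,n}$. Iterating from $c^{(0)}_{m,n}=\frac{16}{\pi^2mn}$ for $m,n$ odd (and $0$ otherwise) gives
\[
c^{(k)}_{m,n}=\frac{16\,k!}{\pi^2mn\lambda_{m,n}^k},
\]
with only odd indices surviving.

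\textbf{Justification.} Green's identity needs some regularity of $u_k$ up to the boundary. The rectangle is convex, so $u_k\in H^2(\mathcal R)\cap H^1_0(\mathcal R)$, and this suffices for the integration by parts; smoothness is not needed at the corners. This step is where some care is required.

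\textbf{Pointwise convergence.} For $k\ge1$ we have $\lambda_{m,n}\ge \pi^2\max(m/a,n/b)^2\gtrsim mn$, so the coefficients are bounded by a constant times $(mn)^{-2}$. The series therefore converges absolutely and uniformly to a continuous function. Since $L^2$ limits and uniform limits agree, this function equals $u_k$ everywhere on $\mathcal R$, not just almost everywhere. This pointwise identification is needed for the later level-curve arguments.

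The main obstacle is the justification step, namely the regularity needed for Green's identity. Alternatively, one can sidestep it: define $v_k$ by the series, check termwise that $-\Delta v_k=kv_{k-1}$ in the distributional sense with zero boundary values, and then invoke uniqueness.
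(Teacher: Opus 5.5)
Your proof is correct and uses the same eigenfunction-expansion strategy as the paper. The paper expands $u_1$ in the $\phi_{m,n}$, applies $-\Delta$ termwise, matches coefficients with the sine expansion of $1$, and for $k\ge 2$ checks that the candidate series solves the Poisson hierarchy and invokes uniqueness (your closing alternative); you instead derive the recursion $\lambda_{m,n}c^{(k)}_{m,n}=k\,c^{(k-1)}_{m,n}$ for the actual $u_k$ from Green's identity and iterate.

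Your version is the more careful of the two. Green's identity replaces the termwise Laplacian, which the paper does not justify. Your $(mn)^{-2}$ coefficient bound gives uniform convergence, and that is what identifies the series with $u_k$ pointwise rather than only in $L^2$, a step the paper leaves implicit.
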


\begin{proof}
We first consider the eigenexpansion of $u_1(x,y)=\mathbb{E}^{(x,y)}[\tau_{\mathcal R}]$:
\[
u_1(x,y)=\sum_{m,n>0}c_{m,n}\sin \left(\frac{m\pi x}{a}\right)\sin \left(\frac{n\pi y}{b}\right).
\]
Since $-\Delta u_1=1$ in $\mathcal R$, 
Applying $-\Delta$ to both sides of the equation above gives
\begin{equation}\label{eq:1eq}
1=\sum_{m,n>0}c_{m,n}\lambda_{m,n}\sin\left(\frac{m\pi x}{a}\right)\sin\left(\frac{n\pi y}{b}\right).
\end{equation} 
To solve for the constants $c_{m,n}$,  we multiply both sides in \eqref{eq:1eq} by $\sin\bigl(\frac{m'\pi x}{a}\bigr)\sin\bigl(\frac{n'\pi y}{b}\bigr)$ and integrate both sides over $\mathcal R$. The left integral is
\begin{align}
\int_0^a\int_0^b \sin\left(\frac{m'\pi x} {a}\right)\sin\left(\frac{n'\pi y}{b}\right)\, dy\, dx \nonumber
&=\frac{ab}{m'n'\pi^2}\left(\cos(m'\pi)-1\right)\left(\cos(n'\pi)-1\right) \nonumber\\ 
&=
\begin{cases}
\frac{4ab}{m'n'\pi^2} & \textup{if $m',n'$ are both odd},\\
0 & \textup{otherwise}.
\end{cases} \label{eq:leftsideint}
\end{align}
The right integral becomes
\begin{align}
&\int_0^a\int_0^b\left(\sum_{m,n>0}c_{m,n}\lambda_{m,n}\sin\left(\frac{m\pi x}{a}\right)\sin\left(\frac{n\pi y}{b}\right)\right)\sin\left(\frac{m'\pi x} {a}\right)\sin\left(\frac{n'\pi y}{b}\right)\,dy\,dx \nonumber \\
&=\sum_{m,n>0}c_{m,n}\lambda_{m,n}\int_0^a\int_0^b\sin\left(\frac{m\pi x}{a}\right)\sin\left(\frac{n\pi y}{b}\right)\sin\left(\frac{m'\pi x}{a}\right)\sin\left(\frac{n'\pi y}{b}\right)\,dy\,dx \nonumber \\
&=c_{m',n'}\lambda_{m',n'}\int_0^a \sin^2\left(\frac{m'\pi x}{a}\right)\,dx \int_0^b\sin^2\left(\frac{n'\pi y}{b}\right)\,dy \nonumber \\
&=c_{m',n'}\lambda_{m',n'}\frac{ab}{4}. \label{eq:rightsideint}
\end{align}
Equating \eqref{eq:leftsideint} and \eqref{eq:rightsideint} gives
\[
c_{m',n'}=
\begin{cases}
\frac{16}{\pi^2m'n'\lambda_{m',n'}} & \textup{if $m',n'$ are both odd},\\
0 & \textup{otherwise}.
\end{cases}
\]
Thus we have shown
\[
u_1(x,y)=\sum_{\substack{m,n>0 \\ \text{$m,n$ odd}}}\frac{16}{\pi^2 mn\lambda_{m,n}}\sin \left(\frac{m\pi x}{a}\right)\sin \left(\frac{n\pi y}{b}\right).
\]
To verify the formula for $u_k(x,y)$ in general, we can readily verify that
\[
u_k(x,y)=\sum_{\substack{m,n>0 \\ \text{$m,n$ odd}}}\frac{16k!}{\pi^2 mn\lambda_{m,n}^k}\sin \left(\frac{m\pi x}{a}\right)\sin \left(\frac{n\pi y}{b}\right),
\]
satisfies
\[
\left\{
\begin{aligned}
            -\Delta u_k &= k u_{k-1} \text{ in } \mathcal R, \\
            u_k &= 0 \text{ on } \partial \mathcal R. 
\end{aligned}
\right.
\]
Since the PDE problem above has a unique solution, we must have the correct formula for $u_k$.
\end{proof}

For our next result, we let $\mathcal R'$ denote the following subset of $\mathcal R$:
\[
\mathcal R'=\biggl(0,\frac{a}{2}\biggr]\times \biggl(0,\frac{b}{2}\biggr].
\]
Our proof that the exit time moments detect location in a rectangle hinges on the following proposition.
\begin{proposition}\label{prop:eigendisc}
    Assume $\mathcal R=(0,a)\times (0,b)$ is a rectangular domain with $a<b$. Let $\lambda_{m,n}$ and $\phi_{m,n}$ be as in \eqref{eq:evaluesrect} and \eqref{eq:efunctionsrect} and let $(x,y), (x', y') \in \mathcal R'$ be distinct. Then $\phi_{m,n}(x,y) \ne \phi_{m,n}(x', y')$ for at least one of $(m,n)=(1,1)$ or $(m,n)=(1,3)$.
\end{proposition}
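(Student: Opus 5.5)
The plan is to show directly that the map
\[
(x,y)\longmapsto \bigl(\phi_{1,1}(x,y),\,\phi_{1,3}(x,y)\bigr)
\]
is injective on $\mathcal R'$. Then any two distinct points of $\mathcal R'$ must differ in at least one of the two coordinates, which is exactly the claim. I will change variables to
\[
s=\sin\left(\frac{\pi x}{a}\right), \qquad t=\frac{\pi y}{b}.
\]
For $(x,y)\in\mathcal R'$ we have $x\in(0,a/2]$ and $y\in(0,b/2]$, so $s\in(0,1]$ and $t\in(0,\pi/2]$. In these variables $\phi_{1,1}=s\sin t$ and $\phi_{1,3}=s\sin(3t)$.

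The key step is the triple angle identity $\sin(3t)=\sin t\,(3-4\sin^2 t)$, which gives
\[
\phi_{1,3}(x,y)=\phi_{1,1}(x,y)\bigl(3-4\sin^2 t\bigr).
\]
On $\mathcal R'$ we have $s>0$ and $\sin t>0$, so $\phi_{1,1}>0$ there. Suppose $\phi_{1,1}(x,y)=\phi_{1,1}(x',y')$ and $\phi_{1,3}(x,y)=\phi_{1,3}(x',y')$. Dividing the second equation by the first (legitimate since $\phi_{1,1}\neq 0$) gives $\sin^2 t=\sin^2 t'$. Since $\sin^2$ is strictly increasing on $(0,\pi/2]$, this forces $t=t'$, and hence $y=y'$.

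Next, $\sin t=\sin t'>0$, so the equation $s\sin t=s'\sin t'$ yields $s=s'$. Because $\sin(\pi x/a)$ is strictly increasing for $x\in(0,a/2]$, this gives $x=x'$. Therefore $(x,y)=(x',y')$, contradicting distinctness.

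There is no real obstacle here. The only points needing care are the positivity of $\phi_{1,1}$ on $\mathcal R'$, which justifies the division, and the monotonicity of $\sin$ on the closed half-intervals. That monotonicity is exactly why $\mathcal R'$ includes the right endpoints $a/2$ and $b/2$ but nothing beyond them. I also note that the hypothesis $a<b$ does not seem to be used in this step; presumably it matters later, when reducing from $\mathcal R$ to $\mathcal R'$ by symmetry.
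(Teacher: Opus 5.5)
Your proposal is correct and is essentially the paper's proof: positivity of $\phi_{1,1}$ on $\mathcal R'$, the triple-angle identity to divide the two equalities and obtain $\sin^2(\pi y/b)=\sin^2(\pi y'/b)$, then injectivity of sine on the half-intervals to get $y=y'$ and then $x=x'$. You are also right that $a<b$ plays no role here, though the paper uses it later, in Theorem \ref{thm:rectangle}, to ensure that $\lambda_{1,1}$ and $\lambda_{1,3}$ are not shared with other odd-index eigenvalues, rather than in the symmetry reduction.
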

\begin{proof}
Suppose that $\phi_{1,1}(x,y) = \phi_{1,1}(x', y')$ and $\phi_{1,3}(x,y) = \phi_{1,3}(x', y')$ for distinct points $(x,y), (x', y') \in \mathcal R'$. That is,
    \begin{align}
        \sin \left(\frac{\pi x}{a}\right)\sin\left(\frac{\pi y}{b}\right) &= \sin\left(\frac{\pi x'}{a}\right)\sin\left(\frac{\pi y'}{b}\right) \label{eq:1xxprime} \\
        \sin\left(\frac{\pi x}{a}\right)\sin\left(\frac{3\pi y}{b}\right) &= \sin\left(\frac{\pi x'}{a}\right)\sin\left(\frac{3\pi y'}{b}\right). \label{eq:2xxprime}
    \end{align}
The function $\sin\left(\frac{\pi z}{a}\right)$ is positive and one-to-one for $z\in\left(0,\frac{a}{2}\right]$ and similarly for $\sin\left(\frac{\pi z}{b}\right)$ on $\left(0,\frac{b}{2}\right]$. It follows that every term in \eqref{eq:1xxprime} is positive. Applying the trig identity $\sin(3\theta)=3\sin(\theta)-4\sin^3(\theta)$ and dividing \eqref{eq:2xxprime} by  \eqref{eq:1xxprime}, we see
\[
\frac{3\sin \left(\frac{\pi y}{b}\right) - 4\sin^3\left(\frac{\pi y}{b}\right)}{\sin\left(\frac{\pi y}{b}\right)}=\frac{3\sin \left(\frac{\pi y'}{b}\right) - 4\sin^3\left(\frac{\pi y'}{b}\right)}{\sin\left(\frac{\pi y'}{b}\right)}
\]
which implies $\sin^2\left(\frac{\pi y}{b}\right)=\sin^2\left(\frac{\pi y'}{b}\right)$. Since the sine terms are both positive, we get $\sin\left(\frac{\pi y}{b}\right)=\sin\left(\frac{\pi y'}{b}\right)$ which implies $y=y'$ by injectivity on $\left(0,\frac{b}{2}\right]$. Now \eqref{eq:1xxprime} gives $\sin\left(\frac{\pi x}{a}\right)=\sin\left(\frac{\pi x'}{a}\right)$ and again by injectivity on $\left(0,\frac{a}{2}\right]$ we deduce $x=x'$ as desired.
\end{proof}

We next prove the main result of this section.

\begin{theorem}\label{thm:rectangle}
Assume $\mathcal R=(0,a)\times (0,b)$ is a rectangular domain with $a<b$. Let $u_k(x,y)=\mathbb{E}^{(x,y)}[\tau_{\mathcal R}^k]$ denote the exit time moments for $k\geq 1$. Given positive real numbers $c_1, c_2, ...$, the intersection of the level curves $u_k(x,y)=c_k$ is either empty, or consists of one, two, or four points.
\end{theorem}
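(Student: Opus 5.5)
The plan is to show that the full moment sequence $(u_k(x,y))_{k\geq 1}$ determines the two values $\phi_{1,1}(x,y)$ and $\phi_{1,3}(x,y)$, up to the symmetries of $\mathcal R$. Proposition \ref{prop:eigendisc} then pins the point down inside the fundamental region $\mathcal R'$, and symmetry gives at most four points.

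\textbf{Reduction to $\mathcal R'$.} By Proposition \ref{prop:momsrect}, every $u_k$ is a combination of $\phi_{m,n}$ with $m,n$ odd. Each such $\phi_{m,n}$ is invariant under the reflections $x\mapsto a-x$ and $y\mapsto b-y$, so each $u_k$ is invariant as well. Every point of $\mathcal R$ is therefore the image of a unique point of $\mathcal R'$ under the group generated by these reflections. That orbit has four points, or two if exactly one coordinate lies on a midline, or one if the point is the center. It therefore suffices to show that at most one point of $\mathcal R'$ lies in the intersection of all the level curves.

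\textbf{Recovering the eigenfunction values.} Write
\[
\frac{u_k(x,y)}{k!}=\sum_{\lambda}\lambda^{-k}\,g_\lambda(x,y),\qquad g_\lambda=\sum_{\substack{m,n \text{ odd}\\ \lambda_{m,n}=\lambda}}\frac{16}{\pi^2 mn}\phi_{m,n},
\]
grouping terms by distinct eigenvalues $\mu_1<\mu_2<\cdots$. I will show by induction that the values $g_{\mu_j}(x,y)$ are determined by the data $(c_k)$. Assume $g_{\mu_1},\dots,g_{\mu_{j-1}}$ are known at the point. Subtract their contributions from $c_k/k!$, multiply by $\mu_j^{k}$, and let $k\to\infty$. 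The remaining tail is bounded by $(\mu_j/\mu_{j+1})^k\sum_{m,n}\frac{16}{\pi^2 mn}\bigl(\lambda_{1,1}/\lambda_{m,n}\bigr)\cdot C$, which tends to $0$ because the series $\sum 1/(mn\lambda_{m,n})$ converges. The limit is $g_{\mu_j}(x,y)$. The constants $a,b$ are known, so each $\mu_j$ is known.

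Now $\lambda_{1,1}$ is the smallest eigenvalue and is simple, so $g_{\mu_1}=\frac{16}{\pi^2}\phi_{1,1}$, and $\phi_{1,1}(x,y)$ is recovered. The main obstacle is $\phi_{1,3}$, because $\lambda_{1,3}$ may coincide with some $\lambda_{m,n}$ with $m,n$ odd and $(m,n)\neq(1,3)$. Since $a<b$, the only odd-index eigenvalues that can lie below or at $\lambda_{1,3}$ are $\lambda_{1,1}$ and $\lambda_{1,3}$ itself. The possible coincidences are $\lambda_{1,3}=\lambda_{m,n}$ with $m\geq3$, which requires $\frac{m^2-1}{a^2}=\frac{9-n^2}{b^2}$, so $n=1$ and $\frac{m^2-1}{a^2}=\frac{8}{b^2}$. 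This happens for $m=3$ exactly when $b=a$, which is excluded. For $m\geq 5$ it would force $b<a$, also excluded. Hence $\lambda_{1,3}$ is simple among odd indices and is the second distinct value $\mu_2$, so $g_{\mu_2}=\frac{16}{3\pi^2}\phi_{1,3}$, and $\phi_{1,3}(x,y)$ is recovered.

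\textbf{Conclusion.} Two points of $\mathcal R'$ on all the level curves share the values of $\phi_{1,1}$ and $\phi_{1,3}$, so by Proposition \ref{prop:eigendisc} they coincide. The intersection is therefore empty or a single $\mathcal R'$-point together with its reflections, which gives one, two, or four points. The care needed is in the uniform convergence bound for the tail and in checking the simplicity of $\lambda_{1,3}$. If the latter failed for some aspect ratio, I would instead use $\phi_{3,1}$ and adapt Proposition \ref{prop:eigendisc}.
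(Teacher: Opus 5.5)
Your proof is correct and follows essentially the same route as the paper. Both arguments reduce to $\mathcal R'$ by the reflection symmetries, use the odd-index expansion from Proposition \ref{prop:momsrect}, note that $a<b$ makes $\lambda_{1,3}$ the second odd-index eigenvalue and simple among odd indices, let $k\to\infty$ so the lowest relevant term dominates, and finish with Proposition \ref{prop:eigendisc}. The only difference is packaging: the paper subtracts the expansions at two points and divides by the first separating term to reach $0=1$, whereas you peel off leading terms at a single point to recover $\phi_{1,1}(x,y)$ and $\phi_{1,3}(x,y)$ explicitly (e.g.\ $\phi_{1,1}(x,y)=\frac{\pi^2}{16}\lim_{k\to\infty}\lambda_{1,1}^k c_k/k!$), which is a slightly more constructive form of the same argument.
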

\begin{proof}
Suppose that $(x,y), (x', y') \in \mathcal R$ satisfy $u_k(x,y)=u_k(x',y')=c_k$ for all $k\geq 1$. First suppose $(x,y), (x', y') \in \mathcal R'$. Courtesy of Proposition \ref{prop:eigendisc}, let $\lambda_{\xi,\nu}$ be the least eigenvalue such that 
    \begin{equation}
        \phi_{\xi,\nu}(x,y) \ne \phi_{\xi,\nu}(x',y').
    \end{equation}
Then we know that either $\lambda_{\xi,\nu}=\lambda_{1,1}$ or $\lambda_{\xi,\nu}=\lambda_{1,3}$ and since $b>a$ there are no other positive odd indices $(m,n)$ such that $\lambda_{\xi,\nu}=\lambda_{m,n}$. Then for each $k$,
\begin{align}
&u_k(x,y)-u_k(x',y') \nonumber \\
&=\frac{16k!}{\pi^2\lambda^k_{\xi,\nu}\xi\nu}[\phi_{\xi,\nu}(x,y) - \phi_{\xi,\nu}(x',y')]+ 
\sum_{\substack{\lambda_{m,n} > \lambda_{\xi,\nu} \\ \text{$m,n$ odd}}}\frac{16k!} {\pi^2\lambda^k_{m,n}mn} [\phi_{m,n}(x,y) - \phi_{m,n}(x',y')]\label{eq:diffbig}\\
&=0. \nonumber
\end{align}
Again, courtesy of Proposition \ref{prop:eigendisc}, the first term in \eqref{eq:diffbig} is nonzero. Dividing out by that first term gives the equation
\begin{equation}\label{eq:ratio}
        0  = 1 +  \sum_{\substack{\lambda_{m,n} > \lambda_{\xi,\nu} \\ \text{$m,n$ odd}}} \Big(\frac{\lambda_{\xi,\nu}}{\lambda_{m,n}}\Big)^k C_{m,n},
\end{equation}
where
\[
C_{m,n} = \frac{\xi\nu}{mn} \frac{[\phi_{m,n}(x,y) - \phi_{m,n}(x',y')]}{[\phi_{\xi,\nu}(x,y) - \phi_{\xi,\nu}(x',y')]}.
\]
Denote
\[
S_k=\sum_{\substack{\lambda_{m,n} > \lambda_{\xi,\nu} \\ \text{$m,n$ odd}}} \Big(\frac{\lambda_{\xi,\nu}}{\lambda_{m,n}}\Big)^k
\]
for $k\geq 2$. We claim that the series defining $S_k$ converges for all $k\geq 2$. It suffices to show that $S_2$ converges. Note that
\begin{align*}
S_2&=\sum_{\substack{\lambda_{m,n} > \lambda_{\xi,\nu} \\ \text{$m,n$ odd}}}\Big(\frac{\lambda_{\xi,\nu}}{\lambda_{m,n}}\Big)^2\\
&=\lambda_{\xi,\nu}^2\sum_{\substack{\lambda_{m,n} > \lambda_{\xi,\nu} \\ \text{$m,n$ odd}}}\left(\frac{1}{\left(\frac{m\pi}{a}\right)^2+\left(\frac{n\pi}{b}\right)^2}\right)^2\\
&\leq \lambda_{\xi,\nu}^2\frac{b^4}{\pi^4}\sum_{\substack{\lambda_{m,n} > \lambda_{\xi,\nu} \\ \text{$m,n$ odd}}}\left(\frac{1}{m^2+n^2}\right)^2
\end{align*}
and so it suffices to show that $\displaystyle \sum_{m,n=1}^{\infty}\left(\frac{1}{m^2+n^2}\right)^2$ converges. But
\[
\sum_{m,n=1}^{\infty}\left(\frac{1}{m^2+n^2}\right)^2\leq \sum_{m,n=1}^{\infty}\frac{1}{4}\frac{1}{m^2}\frac{1}{n^2}=\frac{\pi^4}{144}<\infty
\]
so $S_2$ indeed converges. Having shown that $S_k$ converges for $k\geq 2$, we note that for such $k$,
\[
S_{k+1}=\sum_{\substack{\lambda_{m,n} > \lambda_{\xi,\nu} \\ \text{$m,n$ odd}}}\Big(\frac{\lambda_{\xi,\nu}}{\lambda_{m,n}}\Big)\Big(\frac{\lambda_{\xi,\nu}}{\lambda_{m,n}}\Big)^k\leq\Big(\frac{\lambda_{\xi,\nu}}{\lambda_{m',n'}}\Big)\sum_{\substack{\lambda_{m,n} > \lambda_{\xi,\nu} \\ \text{$m,n$ odd}}}\Big(\frac{\lambda_{\xi,\nu}}{\lambda_{m,n}}\Big)^k = \Big(\frac{\lambda_{\xi,\nu}}{\lambda_{m',n'}}\Big)S_k,
\]
where $\lambda_{m',n'}$ is the first eigenvalue with $m',n'$ odd and $\lambda_{m',n'}>\lambda_{\xi,\nu}$. Letting $k\to \infty$ in the inequality immediately above, it must be the case that $\lim_{k\to \infty}S_k=0$. Thus
\begin{align*}
\left|\sum_{\substack{\lambda_{m,n} > \lambda_{\xi,\nu} \\ \text{$m,n$ odd}}} \Big(\frac{\lambda_{\xi,\nu}}{\lambda_{m,n}}\Big)^k C_{m,n}\right|&\leq \sum_{\substack{\lambda_{m,n} > \lambda_{\xi,\nu} \\ \text{$m,n$ odd}}} \Big(\frac{\lambda_{\xi,\nu}}{\lambda_{m,n}}\Big)^k |C_{m,n}|\\
&\leq \frac{2\xi\nu}{\left|\phi_{\xi,\nu}(x,y) - \phi_{\xi,\nu}(x',y')\right|}\sum_{\substack{\lambda_{m,n} > \lambda_{\xi,\nu} \\ \text{$m,n$ odd}}}\Big(\frac{\lambda_{\xi,\nu}}{\lambda_{m,n}}\Big)^k\\
&=\frac{2\xi\nu}{\left|\phi_{\xi,\nu}(x,y) - \phi_{\xi,\nu}(x',y')\right|} S_k,
\end{align*} where the second inequality follows since $\phi_{m,n}$ are products of sine functions.
Since we have already shown that $\lim_{k\to \infty}S_k=0$, letting $k\to \infty$ in \eqref{eq:ratio} yields the contradiction $0=1$. We have thus shown that if $u_k(x,y)=u_k(x',y')$ for each $k\geq 1$ and $(x,y),(x',y')\in \mathcal R'$ then $(x,y)=(x',y')$.

Next, suppose that $(x,y),(x',y')\in \mathcal R$ and that $u_k(x,y)=u_k(x',y')$ for each $k\geq 1$. It can be verified that the exit time moments $u_k$ are invariant under reflection through the vertical line $x=\frac{a}{2}$ and horizontal reflection through the line $y=\frac{b}{2}$. Choose  points $(x'',y''),(x''',y''')\in \mathcal R'$ obtained from $(x,y)$ and $(x',y')$, respectively, by vertical reflection in $x=\frac{a}{2}$ and/or horizontal reflection through the line $y=\frac{b}{2}$. Then $u_k(x'',y'')=u_k(x''',y''')$ for each $k\geq 1$ and we deduce that $(x'',y'')=(x''',y''')$. It follows that the points $(x,y)$ and $(x',y')$ are obtained from each other by vertical reflection through $x=\frac{a}{2}$ and/or horizontal reflection through the line $y=\frac{b}{2}$. If $x''<\frac{a}{2}$ and $y''<\frac{b}{2}$, then the intersection of the level curves $u_k=c_k$ consists of four points. If $x''=\frac{a}{2}$ and $y''<\frac{b}{2}$, then the intersection of the level curves $u_k=c_k$ consists of two points. If $x''<\frac{a}{2}$ and $y''=\frac{b}{2}$, the intersection of the level curves $u_k=c_k$ again consists of two points. Finally, if $x''=\frac{a}{2}$ and $y''=\frac{b}{2}$, the intersection of the level curves $u_k=c_k$ consists of one point.
\end{proof}

As before, we have the following corollary on location detection.

\begin{corollary}\label{cor:locdetrect}
Consider a rectangular domain $\mathcal R$ with $b>a$. Given values of the full sequence of exit time moments at some unknown point $(x,y)$ in $\mathcal R$, one can detect the location $(x,y)$ up to symmetry.
\end{corollary}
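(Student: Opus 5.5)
This corollary follows from Theorem \ref{thm:rectangle}, in the same way Corollaries \ref{cor:locdetell}--\ref{cor:locdettri} follow from their theorems. Suppose we are given the values $c_k=u_k(x,y)$ for all $k\geq 1$ at an unknown point $(x,y)\in\mathcal R$. Each $c_k$ is positive, since $u_k>0$ in $\mathcal R$; this can be read off from the Poisson hierarchy \eqref{eq:poisshier} and the minimum principle, or simply because $\tau_{\mathcal R}>0$ almost surely for paths started inside $\mathcal R$. The point $(x,y)$ itself lies in the intersection of the level curves $u_k=c_k$, so that intersection is nonempty. By Theorem \ref{thm:rectangle} it therefore consists of exactly one, two, or four points.

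It remains to identify this intersection with the symmetry orbit of $(x,y)$. I would recall from the proof of Theorem \ref{thm:rectangle} that each $u_k$ is invariant under the reflections $x\mapsto a-x$ and $y\mapsto b-y$. This is immediate from Proposition \ref{prop:momsrect}, because $\sin(m\pi(a-x)/a)=\sin(m\pi x/a)$ for odd $m$, and likewise in $y$ for odd $n$. Hence the orbit of $(x,y)$ under the group generated by these two reflections lies inside the intersection.

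Conversely, the proof of Theorem \ref{thm:rectangle} shows that any two points with identical moment sequences are related by these reflections. So the intersection is exactly this orbit, and the data $(c_k)_{k\geq1}$ determine $(x,y)$ up to the symmetries of $\mathcal R$. The hypothesis $b>a$ is used only to invoke Theorem \ref{thm:rectangle}, through Proposition \ref{prop:eigendisc}.

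There is no real obstacle here. The only point needing care is that the orbit and the intersection coincide, rather than merely having the same cardinality, and this comes directly from the injectivity argument on $\mathcal R'$ in the proof of Theorem \ref{thm:rectangle}.
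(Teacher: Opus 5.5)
Your proposal is correct and follows the paper's route. The corollary comes directly from Theorem \ref{thm:rectangle}, whose proof already shows that two points with identical moment sequences are related by the reflections $x\mapsto a-x$ and/or $y\mapsto b-y$, so the intersection is exactly the symmetry orbit, as you note. One minor correction: the hypothesis $b>a$ is not used in Proposition \ref{prop:eigendisc}, whose injectivity argument works for any $a,b$; it is used in the proof of Theorem \ref{thm:rectangle}, to guarantee that $\lambda_{1,3}$ is the next odd--odd eigenvalue after $\lambda_{1,1}$ and is not shared with $\lambda_{3,1}$ or any other odd--odd pair.
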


As noted in the introduction, there are rectangles in which the level curves of the first two exit time moments intersect at more than four points.

\begin{remark}\label{rmk:numevid}
Let $\mathcal R=(0,1)\times (0,2)$ and let $u_k(x,y)=\mathbb{E}^{(x,y)}[\tau_{\mathcal R}^k]$ for $k=1,2$. Taking $c_1=0.015$ and $c_2=0.0015$, Figure \ref{fig:rectlevelcurv} below shows the level curves $u_1=c_1$ and $u_2=c_2$ intersect at the distinct points $P$ and $Q$ in $\mathcal R'=(0,0.5)\times (0,1)$. By symmetry, the full level curves intersect at $8$ distinct points in $\mathcal R$.

\begin{figure}[H]
    \centering
\begin{tikzpicture}[scale=.62]
  \begin{axis}[
    width=11cm,
    height=15cm,
    xmin=0, xmax=0.5,
    ymin=0, ymax=1.0,
    xtick distance=0.1,
    xticklabel style={/pgf/number format/fixed, /pgf/number format/precision=1},
    xlabel={$x$},
    ylabel={$y$},
    axis lines=box,
    grid=major,
    grid style={dashed, opacity=0.3},
    title={Level Curves of $u_1$ and $u_2$ in $\mathcal{R}' = (0, 0.5) \times (0, 1.0)$},
    legend style={at={(0.95,0.95)}, anchor=north east, draw=white!80!black}
  ]

    \addplot[
      blue, 
      thick, 
      smooth
    ] coordinates {
      (0.4900,0.0430) (0.4783,0.0431) (0.4666,0.0432) (0.4548,0.0433) (0.4431,0.0434) 
      (0.4314,0.0436) (0.4197,0.0439) (0.4079,0.0441) (0.3962,0.0444) (0.3845,0.0448) 
      (0.3728,0.0452) (0.3610,0.0457) (0.3493,0.0462) (0.3376,0.0468) (0.3259,0.0474) 
      (0.3141,0.0481) (0.3024,0.0489) (0.2907,0.0498) (0.2790,0.0508) (0.2672,0.0519) 
      (0.2555,0.0531) (0.2438,0.0545) (0.2321,0.0560) (0.2203,0.0577) (0.2086,0.0597) 
      (0.1969,0.0619) (0.1852,0.0645) (0.1734,0.0676) (0.1617,0.0711) (0.1500,0.0753) 
      (0.1391,0.0800) (0.1140,0.0956) (0.0979,0.1112) (0.0868,0.1267) (0.0785,0.1423) 
      (0.0722,0.1579) (0.0672,0.1735) (0.0632,0.1890) (0.0599,0.2046) (0.0572,0.2202) 
      (0.0548,0.2358) (0.0527,0.2513) (0.0509,0.2669) (0.0492,0.2825) (0.0478,0.2981) 
      (0.0466,0.3136) (0.0455,0.3292) (0.0445,0.3448) (0.0436,0.3604) (0.0427,0.3759) 
      (0.0419,0.3915) (0.0412,0.4071) (0.0406,0.4227) (0.0400,0.4383) (0.0395,0.4538) 
      (0.0390,0.4694) (0.0386,0.4850) (0.0381,0.5006) (0.0377,0.5161) (0.0374,0.5317) 
      (0.0370,0.5473) (0.0367,0.5629) (0.0365,0.5784) (0.0362,0.5940) (0.0359,0.6096) 
      (0.0357,0.6252) (0.0355,0.6407) (0.0353,0.6563) (0.0351,0.6719) (0.0349,0.6875) 
      (0.0348,0.7031) (0.0346,0.7186) (0.0345,0.7342) (0.0343,0.7498) (0.0342,0.7654) 
      (0.0341,0.7809) (0.0340,0.7965) (0.0339,0.8121) (0.0339,0.8277) (0.0338,0.8432) 
      (0.0337,0.8588) (0.0336,0.8744) (0.0336,0.8900) (0.0335,0.9055) (0.0335,0.9211) 
      (0.0335,0.9367) (0.0335,0.9523) (0.0334,0.9678) (0.0334,0.9834) (0.0334,0.9990)
    };
    \addlegendentry{$u_1 = 0.015$}

    \addplot[
      red, 
      thick, 
      smooth
    ] coordinates {
      (0.4900,0.0381) (0.4783,0.0381) (0.4666,0.0383) (0.4548,0.0384) (0.4431,0.0386) 
      (0.4314,0.0389) (0.4197,0.0392) (0.4079,0.0396) (0.3962,0.0400) (0.3845,0.0405) 
      (0.3728,0.0411) (0.3610,0.0417) (0.3493,0.0424) (0.3376,0.0432) (0.3259,0.0441) 
      (0.3141,0.0451) (0.3024,0.0462) (0.2907,0.0474) (0.2790,0.0488) (0.2672,0.0503) 
      (0.2555,0.0519) (0.2438,0.0538) (0.2321,0.0559) (0.2203,0.0582) (0.2086,0.0609) 
      (0.1969,0.0639) (0.1852,0.0674) (0.1734,0.0713) (0.1617,0.0759) (0.1526,0.0800) 
      (0.1262,0.0956) (0.1080,0.1112) (0.0947,0.1267) (0.0845,0.1423) (0.0764,0.1579) 
      (0.0699,0.1735) (0.0645,0.1890) (0.0600,0.2046) (0.0562,0.2202) (0.0529,0.2358) 
      (0.0500,0.2513) (0.0475,0.2669) (0.0453,0.2825) (0.0433,0.2981) (0.0416,0.3136) 
      (0.0400,0.3292) (0.0386,0.3448) (0.0373,0.3604) (0.0362,0.3759) (0.0351,0.3915) 
      (0.0341,0.4071) (0.0332,0.4227) (0.0324,0.4383) (0.0317,0.4538) (0.0310,0.4694) 
      (0.0303,0.4850) (0.0297,0.5006) (0.0292,0.5161) (0.0287,0.5317) (0.0282,0.5473) 
      (0.0278,0.5629) (0.0273,0.5784) (0.0270,0.5940) (0.0266,0.6096) (0.0263,0.6252) 
      (0.0260,0.6407) (0.0257,0.6563) (0.0254,0.6719) (0.0252,0.6875) (0.0249,0.7031) 
      (0.0247,0.7186) (0.0245,0.7342) (0.0244,0.7498) (0.0242,0.7654) (0.0240,0.7809) 
      (0.0239,0.7965) (0.0238,0.8121) (0.0236,0.8277) (0.0235,0.8432) (0.0234,0.8588) 
      (0.0234,0.8744) (0.0233,0.8900) (0.0232,0.9055) (0.0232,0.9211) (0.0231,0.9367) 
      (0.0231,0.9523) (0.0231,0.9678) (0.0231,0.9834) (0.0230,0.9990)
    };
    \addlegendentry{$u_2 = 0.0015$}

    \node[
      circle, 
      fill=black, 
      inner sep=1.8pt, 
      label={right:{\textbf{P}\,$\approx (0.059, 0.207)$}}
    ] at (axis cs:0.059304, 0.207350) {};

    \node[
      circle, 
      fill=black, 
      inner sep=1.8pt, 
      label={above right:{\textbf{Q}\,$\approx (0.229, 0.056)$}}
    ] at (axis cs:0.229034, 0.056465) {};

  \end{axis}
\end{tikzpicture}
\caption{Level curves of the first two exit time moments plotted in $\mathcal R'$.}
\label{fig:rectlevelcurv}
\end{figure}
\end{remark}

\section{General Results}\label{secg:gen}

In this section we prove location detection results for general domains. Let $D\subseteq \mathbb{R}^2$ denote a bounded Lipschitz domain. We enumerate the eigenvalues of the Dirichlet eigenvalue problem
 \[
       \left\{
        \begin{aligned}
            -\Delta \phi &= \lambda\phi \text{ in } D, \\
            \phi &= 0  \text{ on } \partial D, 
        \end{aligned}
        \right.
\]
as
\[
0<\lambda_1<\lambda_2\leq \lambda_3\leq \cdots
\]
and let $\phi_1, \phi_2,\ldots$ denote an associated complete orthonormal set of eigenfunctions. For each eigenvalue $\lambda$, let $E_{\lambda}$ denote the eigenspace associated with $\lambda$ and let $\Phi_{\lambda}$ denote the orthogonal projection of the constant function $1$ onto the eigenspace $E_{\lambda}$:
\begin{equation}\label{eq:projdef}
\Phi_{\lambda}=\textup{Proj}_{E_{\lambda}}1=\sum_{\lambda_j=\lambda}\langle 1,\phi_j\rangle \phi_j,
\end{equation}
where $\langle \cdot, \cdot \rangle$ is the standard $L^2$-inner product. Let
\[
\textup{Spec}^{\ast}(D)=\{\lambda_j : \Phi_{\lambda_j}\neq 0\}
\]
denote the collection of eigenvalues for which the associated eigenspace is not orthogonal to constants. Consequently, we think of $\textup{Spec}^{\ast}(D)$ as those eigenvalues that are ``visible'' to constants. The following result says that two points in $D$ have the same exit time moments precisely when their visible spectral projections agree. In particular, whenever the visible spectral projections separate points, the exit time moments uniquely determine location.

\begin{theorem}\label{th:genth1}
Let $D$ be a domain as above and let $u_k(x,y)=\mathbb{E}^{(x,y)}[\tau_D^k]$ for $k\geq 1$ denote the exit time moments from $D$. Let $p,q\in D$ be distinct points. Then $\Phi_{\mu}(p)=\Phi_{\mu}(q)$ for every $\mu \in \textup{Spec}^{\ast}(D)$ if and only if $u_k(p)=u_k(q)$ for every $k\geq 1$. Consequently, if the visible spectral projections separate points in $D$, and the level curves of the exit time moments intersect, then their intersection is a single point.
\end{theorem}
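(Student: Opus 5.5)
The plan is to first derive a spectral expansion of $u_k$ that generalizes Proposition \ref{prop:momsrect}. I will show that
\[
u_k=\sum_{\mu\in \textup{Spec}^{\ast}(D)}\frac{k!}{\mu^k}\,\Phi_{\mu},
\]
with pointwise convergence in $D$ for every $k\geq 1$ (at least for $k$ large, and then for all $k$). Formally this follows from the Poisson hierarchy \eqref{eq:poisshier}. If $G=(-\Delta_D)^{-1}$ is the Dirichlet Green operator, then $u_k=k\,G u_{k-1}$, so $u_k=k!\,G^k 1$. Expanding $1=\sum_j\langle 1,\phi_j\rangle\phi_j$ in $L^2(D)$ and applying $G^k$ gives $u_k=k!\sum_j \lambda_j^{-k}\langle 1,\phi_j\rangle\phi_j$ in $L^2$. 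Grouping by eigenvalue produces the displayed formula, and only $\mu\in\textup{Spec}^{\ast}(D)$ contribute.

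To upgrade to pointwise convergence I will use that $G^k$ is smoothing. For $k\geq 2$ in the plane, $G$ maps $L^2$ into $L^\infty$ boundedly (the Green function is square integrable). Writing $u_k=k!\,G(G^{k-1}1)$ then shows the series converges uniformly. Alternatively, I can use the heat-kernel bound $\sum_j e^{-t\lambda_j}\phi_j(p)^2<\infty$ together with Weyl's law $\lambda_j\sim cj$ to get absolute convergence of $\sum_j\lambda_j^{-k}|\phi_j(p)|\,|\langle 1,\phi_j\rangle|$ for $k\geq 2$. The case $k=1$ can either be handled separately or avoided: only large $k$ matters for the hard direction, and the easy direction needs a valid expansion for each $k$. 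For $k=1$, the value $u_1$ is recovered from $u_2$ through $-\Delta u_2=2u_1$ only with neighborhood data, not at a point. So I will instead argue that $\sum_j\lambda_j^{-1}\langle1,\phi_j\rangle\phi_j$ converges in $L^2$ to $u_1$, and that grouping by eigenvalues and evaluating at points is justified because both sides are continuous and the partial sums of $G(\cdot)$ converge locally uniformly by interior elliptic estimates.

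With the expansion in hand, the direction ($\Rightarrow$) is immediate: if $\Phi_\mu(p)=\Phi_\mu(q)$ for all visible $\mu$, the series for $u_k(p)$ and $u_k(q)$ agree term by term.

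For ($\Leftarrow$) I will mimic the proof of Theorem \ref{thm:rectangle}. Suppose $u_k(p)=u_k(q)$ for all $k$, but some visible $\mu$ has $\Phi_\mu(p)\neq\Phi_\mu(q)$. Let $\mu_0$ be the least such $\mu$; it exists because the eigenvalues are discrete. Subtracting the expansions, multiplying by $\mu_0^k/k!$, and dividing by $\Phi_{\mu_0}(p)-\Phi_{\mu_0}(q)$ gives
\[
0=1+\sum_{\mu>\mu_0}\Big(\frac{\mu_0}{\mu}\Big)^k C_\mu .
\]
Here $C_\mu$ is proportional to $\Phi_\mu(p)-\Phi_\mu(q)$, and the terms with $\mu<\mu_0$ cancel. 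The \textbf{main obstacle} is showing that the tail tends to $0$ as $k\to\infty$; unlike the rectangle, we have no explicit bounds on $\Phi_\mu$. I plan to bound
\[
|C_\mu|\leq c\,\big(|\Phi_\mu(p)|+|\Phi_\mu(q)|\big).
\]
By the previous paragraph, $\sum_\mu \mu^{-k_0}\big(|\Phi_\mu(p)|+|\Phi_\mu(q)|\big)<\infty$ for some fixed $k_0$ (say $k_0=2$, or larger if needed, using the absolute convergence established above with $|\Phi_\mu(p)|\leq\sum_{\lambda_j=\mu}|\langle1,\phi_j\rangle||\phi_j(p)|$). Then for $k\geq k_0$,
\[
\sum_{\mu>\mu_0}\Big(\frac{\mu_0}{\mu}\Big)^k|C_\mu|\leq \Big(\frac{\mu_0}{\mu_1}\Big)^{k-k_0}\mu_0^{k_0}\sum_{\mu>\mu_0}\mu^{-k_0}|C_\mu|\to 0,
\]
where $\mu_1$ is the next eigenvalue after $\mu_0$. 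This yields the contradiction $0=1$.

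The final sentence of the theorem then follows. If the visible projections separate points, any two points in the intersection of all the level curves have equal moments, hence equal projections, hence coincide; so a nonempty intersection is a single point.
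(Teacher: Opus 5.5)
Your proposal is correct and follows essentially the paper's route: expand $u_k=\sum_{\mu}k!\,\mu^{-k}\Phi_\mu$, take the least visible eigenvalue at which $\Phi_\mu(p)\neq\Phi_\mu(q)$, divide that term out, and send the tail to zero as $k\to\infty$ by pulling the factor $(\mu_0/\mu_1)^{k-k_0}$ out of an absolutely convergent series, as the paper does. There are two minor differences. First, the paper gets absolute convergence directly from $|\langle 1,\phi_j\rangle|\le|D|^{1/2}$, $\|\phi_j\|_\infty\le C\lambda_j^{1/2}$ and Weyl's law; your heat-kernel route gives the same eigenfunction bound only if you use the quantitative estimate $\sum_j e^{-t\lambda_j}\phi_j(p)^2\le(4\pi t)^{-1}$ with $t=\lambda_j^{-1}$, not mere finiteness at fixed $t$. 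Second, you justify pointwise convergence of the expansion, including at $k=1$, more carefully than the paper does.
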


\begin{proof}
First, assume that $p, q\in D$ are distinct and $u_k(p)=u_k(q)$ for every $k\geq 1$ but $\Phi_{\mu}(p)\neq \Phi_{\mu}(q)$ for some $\mu \in \textup{Spec}^{\ast}(D)$. The eigenfunction expansion of the constant function $1$ is
\[
1=\sum_{j=1}^{\infty}b_j{\phi_j},
\]
where $b_j=\langle 1, \phi_j\rangle$. The exit time moment $u_k$ thus has eigenfunction expansion
\[
u_k=\sum_{j=1}^{\infty}\frac{k!b_j}{\lambda_j^k}\phi_j.
\]
If we enumerate the values of $\textup{Spec}^{\ast}(D)$ as $\mu_1<\mu_2<\cdots$, then from \eqref{eq:projdef} we see
\begin{equation}\label{eq:momspecstar}
u_k=\sum_{j=1}^{\infty}\frac{k!}{\mu_j^k}\Phi_{\mu_j}.
\end{equation}
By assumption, there is some minimal $\mu_N\in \textup{Spec}^{\ast}(D)$ such that $\Phi_{\mu_N}(p)\neq \Phi_{\mu_N}(q)$. From \eqref{eq:momspecstar}, we see
\[
0=u_k(p)-u_k(q)=\frac{k!}{\mu_N^k}\left( \Phi_{\mu_N}(p)-\Phi_{\mu_N}(q)\right)+\sum_{j=N+1}^{\infty}\frac{k!}{\mu_j^k}\left( \Phi_{\mu_j}(p)-\Phi_{\mu_j}(q)\right).
\]
Dividing out by this first nonzero term gives
\begin{equation}\label{eq:divoutgen}
0=1+\sum_{j=N+1}^{\infty}\left(\frac{\mu_N}{\mu_j}\right)^k\frac{\Phi_{\mu_j}(p)-\Phi_{\mu_j}(q)}{\Phi_{\mu_N}(p)-\Phi_{\mu_N}(q)}.
\end{equation}
Note that
\[
\Phi_{\mu_j}(p)-\Phi_{\mu_j}(q)=\sum_{\lambda_\ell=\mu_j}b_{\ell}(\phi_{\ell}(p)-\phi_{\ell}(q))
\]
and so \eqref{eq:divoutgen} becomes
\[
0=1+\frac{1}{\Phi_{\mu_N}(p)-\Phi_{\mu_N}(q)}\sum_{\lambda_j>\mu_N}\left(\frac{\mu_N}{\lambda_j}\right)^kb_j(\phi_j(p)-\phi_{j}(q)).
\]
Denote
\[
S_k=\frac{1}{\Phi_{\mu_N}(p)-\Phi_{\mu_N}(q)}\sum_{\lambda_j>\mu_N}\left(\frac{\mu_N}{\lambda_j}\right)^kb_j(\phi_j(p)-\phi_{j}(q)).
\]
Note that by Cauchy-Schwarz,
\begin{equation}\label{eq:bjbound}
|b_j|=\left|\int_D\phi_j\,dx\,dy\right|\leq \left(\int_D 1\, dx \,dy \right)^{\frac{1}{2}}\left( \int_D \phi_j^2 \,dx\,dy\right)^{\frac{1}{2}}=|D|^{\frac{1}{2}},
\end{equation}
where $|D|$ is the area of $D$. And by standard estimates (see \cite{BHV}, for instance) there exists some constant $C$ independent of the eigenvalues where
\begin{equation}\label{eq:sogge}
\|\phi_j\|_{\infty}\leq C\lambda_j^{\frac{1}{2}}.
\end{equation}
Combining \eqref{eq:bjbound} and \eqref{eq:sogge}, if $k\geq 2$ we see
\begin{align}
|S_k|&\leq \frac{1}{|\Phi_{\mu_N}(p)-\Phi_{\mu_N}(q)|}\sum_{\lambda_j>\mu_N}\left(\frac{\mu_N}{\lambda_j}\right)^k|b_j|\,|\phi_j(p)-\phi_{j}(q)| \nonumber \\
&\leq \frac{2C|D|^{\frac{1}{2}}}{|\Phi_{\mu_N}(p)-\Phi_{\mu_N}(q)|}\sum_{\lambda_j>\mu_N}\left(\frac{\mu_N}{\lambda_j}\right)^k\lambda_j^{\frac{1}{2}} \nonumber \\
&= \frac{2C|D|^{\frac{1}{2}}\mu_N^2}{|\Phi_{\mu_N}(p)-\Phi_{\mu_N}(q)|}\sum_{\lambda_j>\mu_N}\left(\frac{\mu_N}{\lambda_j}\right)^{k-2}\frac{1}{\lambda_j^{\frac{3}{2}}} \nonumber \\
&\leq \frac{2C|D|^{\frac{1}{2}}\mu_N^2}{|\Phi_{\mu_N}(p)-\Phi_{\mu_N}(q)|}\left(\frac{\mu_N}{\lambda^\ast}\right)^{k-2}\sum_{\lambda_j>\mu_N}\frac{1}{\lambda_j^{\frac{3}{2}}}, \label{eq:weyl32}
\end{align}
where $\lambda^{\ast}$ is the first Dirichlet eigenvalue that exceeds $\mu_N$. By the Weyl asymptotics, 
\[
\lambda_j \sim \frac{4\pi}{|D|}j
\]
and so the series in \eqref{eq:weyl32} converges. Since $\frac{\mu_N}{\lambda^{\ast}}<1$, we see that each $S_k$ converges absolutely for $k\geq 2$ and $S_k\to 0$ as $k\to \infty$. Thus letting $k\to \infty$ in \eqref{eq:divoutgen} yields the contradiction $0=1$.

Next assume that $p,q\in D$ are distinct and $\Phi_{\mu}(p)=\Phi_{\mu}(q)$ for every $\mu \in \textup{Spec}^{\ast}(D)$. Then \eqref{eq:momspecstar} immediately gives that $u_k(p)=u_k(q)$ for every $k\geq 1$.
\end{proof}

A similar result holds for domains with two axes of symmetry. More precisely we have the following.

\begin{theorem}\label{th:genth2}
Let $D\subseteq \mathbb{R}^2$ denote a bounded Lipschitz domain that is symmetric with respect to the $x$- and $y$-axes and let $u_k(x,y)=\mathbb{E}^{(x,y)}[\tau_D^k]$ for $k\geq 1$ denote the exit time moments from $D$. Denote
\[
D^+=\{(x,y)\in D:x,y\geq 0\}
\]
and assume that whenever $p,q\in D^+$ are distinct, there is some $\mu \in \textup{Spec}^{\ast}(D)$ where $\Phi_{\mu}(p)\neq \Phi_{\mu}(q)$. Then distinct points $p,q\in D$ have the same exit time moments if and only if $p$ and $q$ are obtained from each other by reflection across the $x$- and/or $y$-axes. Consequently, if $c_k$ for $k\geq 1$ are positive numbers such that the level curves $u_k=c_k$ intersect, then their intersection is either one, two, or four points determined by the values $c_k$.
\end{theorem}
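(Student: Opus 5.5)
The plan is to reduce to Theorem \ref{th:genth1} applied on $D^+$, in the same way that Theorem \ref{thm:rectangle} reduced to the quarter rectangle $\mathcal R'$.

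\textbf{Step 1: invariance under the reflections.} Let $R_1(x,y)=(x,-y)$ and $R_2(x,y)=(-x,y)$. Since $D$ is invariant under $R_1$ and $R_2$, so is Brownian motion started in $D$. Hence $\tau_D$ has the same law under $\mathbb P^{p}$ and $\mathbb P^{R_i p}$, and $u_k(R_ip)=u_k(p)$ for all $k$.

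A purely analytic route also works. The function $u_k\circ R_i$ solves the Poisson hierarchy \eqref{eq:poisshier}: by induction, $-\Delta(u_k\circ R_i)=k\,u_{k-1}\circ R_i=k\,u_{k-1}$, and $u_k\circ R_i$ vanishes on $\partial D$. Uniqueness for bounded domains then gives $u_k\circ R_i=u_k$. This proves the ``if'' direction: points related by reflection across the $x$- and/or $y$-axes have the same exit time moments.

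\textbf{Step 2: the ``only if'' direction.} Suppose $p\neq q$ in $D$ have $u_k(p)=u_k(q)$ for all $k$. Apply reflections to move them to $p',q'\in D^+$; by Step 1, $u_k(p')=u_k(q')$ for all $k$.

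Suppose, for contradiction, that $p'\neq q'$. By hypothesis there is $\mu\in\textup{Spec}^{\ast}(D)$ with $\Phi_\mu(p')\neq\Phi_\mu(q')$. The first half of the proof of Theorem \ref{th:genth1} then applies verbatim. It only uses two facts:
\begin{itemize}
\item the expansion \eqref{eq:momspecstar};
\item the existence of some $\mu\in\textup{Spec}^{\ast}(D)$ separating the two points, after which one picks the minimal such $\mu_N$.
\end{itemize}
Neither fact needs separation at every pair of points of $D$. Running that argument yields $u_k(p')\neq u_k(q')$ for some $k$, a contradiction. Hence $p'=q'$, and $p,q$ are reflections of a common point of $D^+$, so each is obtained from the other by reflection across the $x$- and/or $y$-axes.

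\textbf{Step 3: counting.} Pick any point $p$ in the intersection of the level curves and reflect it to $p^+\in D^+$. By Step 2, the intersection is exactly the orbit of $p^+$ under the group $\{I,R_1,R_2,R_1R_2\}$. The size of the orbit is determined by where $p^+$ sits:
\begin{itemize}
\item if $p^+$ lies on neither axis, the orbit has four points;
\item if $p^+$ lies on exactly one axis, it has two points;
\item if $p^+$ is the origin (when the origin lies in $D$), it has one point.
\end{itemize}

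\textbf{Main obstacle.} The only delicate point is verifying that the contradiction argument in Theorem \ref{th:genth1} is genuinely local to the pair $p',q'$. That argument uses the eigenfunction bound \eqref{eq:sogge} and Weyl asymptotics, both of which are global on $D$ and independent of the points. So I expect no new difficulty beyond stating the reduction carefully.
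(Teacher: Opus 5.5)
Your proposal is correct and follows essentially the same route as the paper: use the reflection invariance of the $u_k$ to move both points into $D^+$, apply the argument of Theorem \ref{th:genth1} to that pair to force them to coincide, and then count the orbit under the two reflections. Your justification of the reflection invariance, which the paper only asserts, is a welcome addition. The ``main obstacle'' you flag is moot: the first sentence of Theorem \ref{th:genth1} is already stated for an arbitrary pair of distinct points of $D$, so you can cite it directly for $p',q'\in D^+\subseteq D$.
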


\begin{proof}
Suppose that $p,q\in D$ are distinct and that $u_k(p)=u_k(q)$ for each $k\geq 1$. Since the $u_k$ are invariant under reflection across the $x$- and $y$-axes, choose  points $r,s\in D^+$ obtained from $p$ and $q$, respectively, by vertical reflection across the $y$-axis and/or horizontal reflection across the $x$-axis. Then $u_k(r)=u_k(s)$ for each $k\geq 1$. The proof of Theorem \ref{th:genth1} applied to $D^+$ gives that $\Phi_{\mu}(r)=\Phi_{\mu}(s)$ for every $\mu \in \textup{Spec}^{\ast}(D)$. It follows that the points $p$ and $q$ are obtained from each other by vertical reflection across the $y$-axis and/or horizontal reflection across the $x$-axis. Conversely if $p$ and $q$ are obtained from each other by reflection across the $x$- and/or $y$-axes, then the exit time moments have to agree at $p$ and $q$ because the moments themselves are invariant under those reflections.

Next let $p$ be a point in the intersection of all of the level curves of the exit time moments. If $p$ does not lie on either coordinate axis, then the intersection of the level curves $u_k=c_k$ consists of four points. If $p$ lies on the $y$-axis but not the $x$-axis, then the intersection of the level curves $u_k=c_k$ consists of two points. If $p$ lies on the $x$-axis but not the $y$-axis, the intersection of the level curves $u_k=c_k$ consists again of two points. Finally, if $p$ lies on both coordinate axes, the intersection of the level curves $u_k=c_k$ consists of one point.

\end{proof}

Our last remark illustrates that the visible spectral projections are the key tools needed to detect location in rectangular domains.

\begin{remark}
Let $D=\left(-\frac{a}{2}, \frac{a}{2}\right) \times \left(-\frac{b}{2}, \frac{b}{2}\right)$ with $b>a$. Then the Dirichlet eigenvalues are
\[
\lambda_{m,n}=\left( \frac{m\pi}{a} \right)^2+\left( \frac{n\pi}{b} \right)^2
\]
and the $L^2$-normalized eigenfunctions are
\[
\phi_{m,n}(x,y)=\frac{2}{\sqrt{ab}} \sin\left(\frac{m\pi}{a}\left(x+\frac{a}{2}\right)\right) \sin\left(\frac{n\pi}{b}\left(y+\frac{b}{2}\right)\right)
\]
for $m,n\geq 1$. In general
\[
\langle 1, \phi_{m,n}\rangle=\int_D\phi_{m,n}\,dx\,dy=\frac{2\sqrt{ab}}{mn\pi^2}(1-(-1)^m)(1-(-1)^n)
\]
and so $\langle 1, \phi_{m,n}\rangle =0$ if either $m$ or $n$ is even. It follows that for $\lambda \in \textup{Spec}^{\ast}(D)$, it is necessary that there exist $m$ and $n$ both odd where $\lambda=\lambda_{m,n}$. The eigenvalue $\lambda_{1,1}$ is simple and its eigenfunction is positive in $D$ so $\lambda_{1,1}\in \textup{Spec}^{\ast}(D)$. We have
\[
\Phi_{\lambda_{1,1}}=\langle 1, \phi_{1,1}\rangle \phi_{1,1}=\frac{16}{\pi^2}\sin\left(\frac{\pi}{a}\left(x+\frac{a}{2}\right)\right) \sin\left(\frac{\pi}{b}\left(y+\frac{b}{2}\right)\right).
\]
Also, $\lambda_{1,3}\in \textup{Spec}^{\ast}(D)$ because there are no other eigenvalues $\lambda_{m,n}$ with $m$ and $n$ odd where $\lambda_{1,3}=\lambda_{m,n}$ and as noted above, if $\lambda_{1,3}=\lambda_{m,n}$ for some $m$ and $n$, at least one of which is even, then $\langle 1, \phi_{m,n}\rangle =0$. It follows that
\[
\Phi_{\lambda_{1,3}}=\langle 1, \phi_{1,3}\rangle \phi_{1,3}=\frac{16}{3\pi^2}\sin\left(\frac{\pi}{a}\left(x+\frac{a}{2}\right)\right) \sin\left(\frac{3\pi}{b}\left(y+\frac{b}{2}\right)\right).
\]
Now let $\mathcal R=(0,a)\times (0,b)$. With the notation from Section \ref{sect:rect}, $\mathcal R'=\left(0,\frac{a}{2}\right] \times \left(0, \frac{b}{2}\right]$. The coordinate map $L:\mathbb{R}^2 \to \mathbb{R}^2$ defined by
\[
L(x,y)=\left(\frac{a}{2}-x , \frac{b}{2}-y \right)
\]
is a bijection from $D$ to $\mathcal R$ and takes $D^+$ to $\mathcal R'$. Given distinct points $(x,y),(x',y')\in D^+$, the points $L(x,y)$ and $L(x',y')$ are distinct in $\mathcal R'$. In Proposition \ref{prop:eigendisc}, we showed that either
\begin{equation}\label{eq:rmktrig1}
\sin \left(\frac{\pi}{a}\left(\frac{a}{2}-x\right)\right)\sin\left(\frac{\pi}{b}\left(\frac{b}{2}-y \right)\right) \neq \sin\left(\frac{\pi}{a}\left(\frac{a}{2}-x'\right)\right)\sin\left(\frac{\pi }{b}\left(\frac{b}{2}-y' \right)\right)
\end{equation}
or
\begin{equation}\label{eq:rmktrig2}
\sin \left(\frac{\pi}{a}\left(\frac{a}{2}-x\right)\right)\sin\left(\frac{3\pi}{b}\left(\frac{b}{2}-y \right)\right) \neq \sin\left(\frac{\pi}{a}\left(\frac{a}{2}-x'\right)\right)\sin\left(\frac{3\pi }{b}\left(\frac{b}{2}-y' \right)\right).
\end{equation}
Using the trig identities $\sin \left(\frac{\pi}{2}-\theta \right)=\sin \left(\frac{\pi}{2}+\theta \right)$ and $\sin \left(\frac{3\pi}{2}-\theta \right)=\sin \left(\frac{3\pi}{2}+\theta \right)$ in \eqref{eq:rmktrig1} and \eqref{eq:rmktrig2}, it must be that either
\[
\sin\left(\frac{\pi}{a}\left(x+\frac{a}{2}\right)\right) \sin\left(\frac{\pi}{b}\left(y+\frac{b}{2}\right)\right)  \neq \sin\left(\frac{\pi}{a}\left(x'+\frac{a}{2}\right)\right) \sin\left(\frac{\pi}{b}\left(y'+\frac{b}{2}\right)\right)
\]
or
\[
\sin\left(\frac{\pi}{a}\left(x+\frac{a}{2}\right)\right) \sin\left(\frac{3\pi}{b}\left(y+\frac{b}{2}\right)\right)  \neq \sin\left(\frac{\pi}{a}\left(x'+\frac{a}{2}\right)\right) \sin\left(\frac{3\pi}{b}\left(y'+\frac{b}{2}\right)\right).
\]
In other words, $\Phi_{\lambda_{1,1}}(x,y)\neq \Phi_{\lambda_{1,1}}(x',y')$ or $\Phi_{\lambda_{1,3}}(x,y)\neq \Phi_{\lambda_{1,3}}(x',y')$. That is, points in $D^+$ are separated by either $\Phi_{\lambda_{1,1}}$ or $\Phi_{\lambda_{1,3}}$.
\end{remark}

\subsection*{Artificial Intelligence Acknowledgement}
    We acknowledge the use of ChatGPT and Google Gemini for its use in generating LaTeX/Mathematica code for our figures. Theorem \ref{th:genth1} in Section \ref{secg:gen} was developed through conversations with ChatGPT.

\end{document}